\documentclass[a4paper,12pt]{amsart}
\usepackage[english]{babel}
\usepackage{amsmath,amsthm,amssymb,amsfonts}
\usepackage{multicol}
\usepackage{todonotes}
 \usepackage{enumitem} 
\usepackage[bookmarks=true,hyperindex,pdftex,colorlinks,citecolor=blue]{hyperref}
\hypersetup{colorlinks=true,  linkcolor=blue, citecolor=red, urlcolor=cyan}
\usepackage{cancel}
\usepackage[normalem]{ulem}

\usepackage{xcolor}
\usepackage[export]{adjustbox}
\usepackage{graphicx}
\usepackage{subcaption}
\usepackage{tikz-cd}

\usepackage{tikz}
\usetikzlibrary{mindmap,backgrounds, calc}
\usetikzlibrary{matrix,chains,scopes,positioning,arrows,fit}
\usetikzlibrary{positioning, shapes, patterns}
\usetikzlibrary{automata} 
\usetikzlibrary{shapes.geometric, arrows, arrows.meta}
\usetikzlibrary{positioning,decorations.markings}

\theoremstyle{plain}
\newcounter{maincoro}

\newtheorem{theorem}{Theorem}[section]
\newtheorem{lemma}[theorem]{Lemma}

\newtheorem{proposition}[theorem]{Proposition}
\newtheorem{corollary}[theorem]{Corollary}
\newtheorem{question}[theorem]{Question}
\theoremstyle{definition}
\newcounter{maintheorem}

\newtheorem{definition}[theorem]{Definition}
\newtheorem{example}[theorem]{Example}

\theoremstyle{remark}
\newtheorem{remark}[theorem]{Remark}
\numberwithin{equation}{section}
\newtheorem*{claim*}{Claim}

\newcommand{\R}{\mathbb{R}}
\newcommand{\C}{\mathbb{C}}
\newcommand{\N}{\mathbb{N}}
\newcommand{\K}{\mathbb{K}}

 \makeatletter

\renewcommand{\tocsection}[3]{%
	\indentlabel{\@ifnotempty{#2}{\bfseries\ignorespaces#1 #2\quad}}\bfseries#3}
\renewcommand{\tocsubsection}[3]{%
	\indentlabel{\@ifnotempty{#2}{\ignorespaces#1 #2\quad}}#3}

\def\@tocline#1#2#3#4#5#6#7{\relax
	\ifnum #1>\c@tocdepth 
	\else
	\par \addpenalty\@secpenalty\addvspace{#2}%
	\begingroup \hyphenpenalty\@M
	\@ifempty{#4}{%
		\@tempdima\csname r@tocindent\number#1\endcsname\relax
	}{%
		\@tempdima#4\relax
	}%
	\parindent\z@ \leftskip#3\relax \advance\leftskip\@tempdima\relax
	\rightskip\@pnumwidth plus1em \parfillskip-\@pnumwidth
	#5\leavevmode\hskip-\@tempdima{#6}\nobreak
	\leaders\hbox{$\m@th\mkern \@dotsep mu\hbox{.}\mkern \@dotsep mu$}\hfill
	\nobreak
	\hbox to\@pnumwidth{\@tocpagenum{\ifnum#1=1\bfseries\fi#7}}\par
	\nobreak
	\endgroup
	\fi}
\AtBeginDocument{%
	\expandafter\renewcommand\csname r@tocindent0\endcsname{0pt}
}
\def\l@subsection{\@tocline{2}{0pt}{2.5pc}{5pc}{}}
\makeatother

\DeclareMathOperator{\co}{co}

\DeclareMathOperator{\id}{Id}

\DeclareMathOperator{\Span}{span}

\newcommand{\nn}[1]{{\left\vert\kern-0.25ex\left\vert\kern-0.25ex\left\vert #1 
		\right\vert\kern-0.25ex\right\vert\kern-0.25ex\right\vert}}

\renewcommand{\geq}{\geqslant}
\renewcommand{\leq}{\leqslant}

\newcommand{\norm}[1]{\left\Vert#1\right\Vert}

\newcommand{\NA}{\operatorname{NA}}

\newcommand{\call}{\mathcal{L}}

\newcommand{\spann}{\operatorname{span}}

\newcommand{\pten}{\ensuremath{\widehat{\otimes}_\pi}}
\newcommand{\iten}{\ensuremath{\widehat{\otimes}_\varepsilon}}
\newcommand{\eps}{\varepsilon}

\newcommand{\INA}{\operatorname{INA}}
\newcommand{\FNA}{\operatorname{FNA}}
 
\newcommand{\wconv}{\stackrel{w}{\to}} 
\newcommand{\set}[1]{\left\{{#1}\right\}} 
\newcommand{\duality}[1]{\left<{#1}\right>} 
\newcommand{\abs}[1]{\left|{#1}\right|} 
\newcommand{\pare}[1]{\left({#1}\right)} 
\newcommand{\wstarconv}{\stackrel{w^*}{\to}} 

\thanks{}

\subjclass[2020]{}

\date{\today}

\keywords{}

\begin{document}

\title[Integral projective norm-attaining tensors]{Integral representations of projective norm-attaining tensors}

\author[R.~J.~Aliaga]{Ramón J. Aliaga}
\address[R.~J.~Aliaga]{Instituto Universitario de Matemática Pura y Aplicada, Universitat Politècnica de València, Camino de Vera S/N, 46022, Valencia, Spain \newline
\href{https://orcid.org/0000-0002-2513-7711}{ORCID: \texttt{0000-0002-2513-7711}}}
\email{\texttt{raalva@upv.es}}
\urladdr{https://raalva.wordpress.com/}

\author[S.~Dantas]{Sheldon Dantas}
\address[S.~Dantas]{Czech Technical University in Prague, FEE, Department of Mathematics, Technická 2, 16627, Prague 6, Czech Republic \newline
\href{https://orcid.org/0000-0001-8117-3760}{ORCID: \texttt{0000-0001-8117-3760}}}
\email{\texttt{sheldon.dantas@fel.cvut.cz}}
\urladdr{www.sheldondantas.com}

\author[J.~Guerrero-Viu]{Juan Guerrero-Viu}
        \address[J.~Guerrero-Viu]{Departamento de Matemáticas, Universidad de Zaragoza, C/ Pedro Cerbuna 12, 50009, Zaragoza, Spain \newline
\href{https://orcid.org/0009-0001-2125-5120}{ORCID: \texttt{0009-0001-2125-5120}}}
\email{j.guerrero@unizar.es}

\author[M.~Jung]{Mingu Jung}
        \address[M.~Jung]{Department of Mathematics \& Research Institute for Natural Sciences, Hanyang University, 04763 Seoul, Republic of Korea \newline
\href{https://orcid.org/0000-0003-2240-2855}{ORCID: \texttt{0000-0003-2240-2855}}}
\email{mingujung@hanyang.ac.kr}
\urladdr{https://sites.google.com/view/mingujung/}

\author[Ó.~Roldán]{Óscar Roldán}
\address[Ó.~Roldán]{Departamento de Análisis Matemático, Universidad de Valencia, Avenida Vicente Andrés Estellés 19, 46100, Burjasot (Valencia), Spain \newline
\href{https://orcid.org/0000-0002-1966-1330}{ORCID: \texttt{0000-0002-1966-1330}}}
\email{oscar.roldan@uv.es}

\thanks{}

\date{\today}
\keywords{Banach space; projective tensor product; projective norm-attainment; norm-attaining operator, Bishop-Phelps theorem.}
\subjclass[2020]{46B04, 46B20, 46B28}

\begin{abstract} 
We introduce a Bochner integral approach to projective norm attainment in tensor products of Banach spaces by defining the class of integral projective norm-attaining tensors. This framework provides a broader, measure-theoretic approach to the study of projective norm attainment in tensor products of Banach spaces. We show that every integral norm-attaining tensor can be approximated in norm by norm-attaining tensors with finite representations. As a consequence, the Bishop-Phelps type density problem for classical norm-attaining tensors is equivalent to the corresponding density problem for integral norm-attaining tensors. Moreover, we prove that if an integral projective norm-attaining tensor represented by a Radon measure is an extreme point, then it must be an elementary tensor.
We further investigate weaker topological versions of integral norm-attainment, including weak and weak$^*$ integral representations, providing sufficient conditions for the existence of Bochner representations. Finally, we extend known constructions of projective tensor products containing non-norm-attaining tensors to the integral setting. We show, for instance, that $L_1\widehat{\otimes}_\pi L_p$ and the real $c_0\widehat{\otimes}_\pi L_p$ contain non-norm-attaining tensors for $1<p<\infty$.

\end{abstract}

\maketitle

\section{Introduction}

The study of norm-attaining mappings has played a central role in Banach space theory since the classical results of reflexivity due to James and the Bishop-Phelps theorem on the denseness of norm-attaining functionals. These results revealed that norm attainment is closely connected to the geometry of the unit ball of the involved Banach spaces and have motivated a vast literature extending the problem to bounded linear operators (see, for instance, \cite{B, H, S, U}), bilinear mappings (see, for instance, \cite{gasp, AFW, Choi}), polynomials (see, for instance, \cite{AGM, ADM}), and other nonlinear settings (see \cite{CCGMR, DM, Godefroy15,kms}).

Projective tensor products, on the other hand, provide a natural framework in which several of these phenomena meet together (see \cite{DGJR, DJRR, GGR, Rueda}). Given Banach spaces $X$ and $Y$, the projective tensor product $X \pten Y$ is canonically related to bilinear mappings and operators through natural identifications. While classical norm attainment concerns elements of the dual space, a more recent line of research has focused on norm attainment at the level of the tensor product itself by trying to find ``good enough'' representations in the tensor product. Inspired by the theory of nuclear operators (see \cite[Definition 2.1]{DJRR}), an element $z\in X\pten Y$ is said to \emph{attain its projective norm} if the infimum in the definition of its projective norm is actually a minimum, that is, if $z$ admits an \emph{optimal representation} of the form $z=\sum_{n=1}^\infty x_n\otimes y_n$ with $\|z\|_\pi=\sum_{n=1}^\infty \|x_n\|\,\|y_n\|$. The set of all such tensors is denoted by $\mathrm{NA}_\pi(X \pten Y)$.

The first systematic results in this context \cite{DGJR, DJRR} showed that projective norm attainment holds in a number of classical situations. In particular,  $\mathrm{NA}_\pi(X \pten Y)=X \pten Y$ whenever $X$ and $Y$ are finite-dimensional \cite[Proposition 3.5]{DJRR}, when $X=\ell_1(I)$ and $Y$ is arbitrary \cite[Proposition 3.3]{DJRR}, when $X=Y$ is a Hilbert space (see \cite[Proposition 3.8]{DJRR} for the complex case and \cite[Theorem 2.2]{GGM} for the real case), or in some settings involving polyhedral spaces \cite[Theorem 4.1]{DGJR}. On the other hand, much effort has been devoted to studying the denseness of this subset and to obtaining Bishop-Phelps type results in this context. It is now known that $\mathrm{NA}_\pi(X \pten Y)$ is dense under a variety of geometric and approximation hypotheses, such as the metric $\pi$-property, uniform convexity, or the Radon--Nikod\'ym and approximation properties in suitable dual spaces. These results certainly cover a wide class of Banach spaces including, from our point of view, most of the classical ones.

Nevertheless, projective norm attainment presents some similar behavior to what happens to the classical Bishop-Phelps theorem when one tries to extend it to the operator case: there exist Banach spaces $X$ and $Y$ for which $\mathrm{NA}_\pi(X \pten Y)$ fails to be dense (see \cite[Section 5]{DJRR}). In fact, there are even examples in which finite sums of elementary tensors do not attain their projective norm (see \cite[Proposition 5.5]{DJRR}). Moreover, it may happen that both $\mathrm{NA}_\pi(X \pten Y)$ and its complement are dense in $X \pten Y$ (see \cite[Remark 3.2.(2) and Theorem 3.3]{Rueda}), showing that norm-attaining and non-norm-attaining elements can coexist in nontrivial scenarios. This shows that projective norm attainment is strongly dependent on the particular underlying spaces that one is working with. In other words, the general picture that projective norm attainment gives us is that, depending on the geometric and approximation properties of $X$ and $Y$, the subset $\mathrm{NA}_\pi(X \pten Y)$ may coincide with the whole space, be dense, fail to be dense, or even have dense complement. Despite the significant progress achieved in recent years (we send the reader to \cite{GGR} for the latest advances and an overview of the current state of research), a complete understanding of the structure and size of the set of projective norm-attaining tensors is still lacking.

The aim of the present paper is to further contribute to this line of research. To this end, we introduce the notion of \emph{integral projective norm-attaining tensors} (see Definition~\ref{def:INA}), which extends the classical concept of projective norm attainment by allowing continuous Bochner integral representations instead of discrete tensor decompositions. This Bochner integral approach provides a broader (at least formally) framework for the study of projective norm attainment. Within this setting, we investigate when $\NA_{\pi}$ coincides with the whole space and when it is norm dense.

Our approach is inspired by an analogous development in the context of Lipschitz-free spaces $\mathcal{F}(M)$. In these spaces, one similarly has a set of canonical generating elements called {\it (elementary) molecules}, which play a role similar to elementary tensors in $X\pten Y$: every element $\mu\in\mathcal{F}(M)$ can be expressed as an absolutely convergent series of molecules $\mu=\sum_{n=1}^\infty m_n$, and $\norm{\mu}$ equals the infimum of $\sum_{n=1}^\infty\norm{m_n}$ over all such representations. When this infimum is attained, $\mu$ is called a {\it convex series of molecules} - a concept analogous to norm-attaining tensors. Replacing a discrete sum with a continuous one (i.e., an integral), one obtains {\it convex integrals of molecules}, which lead to an elegant theory that yields solutions to previously open questions on Lipschitz-free spaces \cite{APS24}. One could hope that an analogous concept in projective tensor products would similarly advance our knowledge.

The paper is organized as follows.
First, in Section \ref{section:ina}, we establish several basic properties of integral projective norm-attaining tensors, including stability under passing to complemented subspaces and characterizations involving norm-attaining bilinear forms. Our first main result shows that every integral norm-attaining tensor can be approximated in norm by (finitely) norm-attaining tensors. In particular, this yields that the Bishop-Phelps type density problem for classical norm-attaining tensors is equivalent to the corresponding density problem for integral norm-attaining tensors. In addition, we prove that if an integral projective norm-attaining tensor witnessed by a Radon measure is an extreme point of the unit ball, then it is necessarily an elementary tensor, highlighting a strong connection between integral projective norm attainment and the extremal structure of the projective tensor product. Next, in Section \ref{section:general-ina}, we study weaker topological versions of integral norm-attainment, including weak and weak$^*$ integral representations. Namely, we provide sufficient conditions under which every projective tensor element is integral norm-attaining in this weaker topological sense and obtain density results for classical projective norm-attaining tensors. Some examples are presented showing that vector-valued integral representations may fail to exist even when scalar integral representations are available, illustrating the subtle role of Bochner integrability in this context. Finally, in Section \ref{section:non-norm-attaining} we extend known examples of projective tensor products containing non-norm-attaining tensors to the integral setting. In particular, we obtain new results for tensor products involving $c_0$-type spaces and $L_1$-spaces, including results that are new even in the classical projective tensor setting.
Namely, we show that the following infinite-dimensional projective tensor products contain non-norm-attaining tensors:
\begin{itemize}
\item the real $c_0\pten Y$, where $Y$ and $Y^*$ are strictly convex (Theorem \ref{thm:c0Y}),
\item $c_0\pten L_p(\mu)$ for $p=\frac{2n}{2n-1}$, $n\in\N$ (see Theorem \ref{theo:ellps-2} and Corollary \ref{theo:ellps} for a more general statement),
\item $L_1(\mu)\pten Y$, where $\mu$ is not purely atomic, $Y$ contains a two-dimensional strictly convex subspace, and $Y^*$ has the Radon-Nikod\'ym property (see Theorem \ref{theorem:L1-result2} for a more general statement).
\end{itemize}

\subsection{Notation and preliminaries}

In this section we introduce the notation and collect the preliminary material that will be used throughout the paper.

The letters $X$, $Y$ and $Z$ denote Banach spaces over the scalar field $\K$, where $\K = \R$ or $\C$. The closed unit ball and the unit sphere of a Banach space $X$ are denoted by $B_X$ and $S_X$, respectively. For a subset $B\subseteq X$, we write $\co(B)$ for its convex hull. Given a convex subset $C$ of Banach space $X$, a point $x\in C$ is called an extreme point of $C$ if it cannot be expressed as a nontrivial convex combination of other points of $C$.

Given Banach spaces $X$ and $Y$, we denote by $\mathcal{L}(X,Y)$ the Banach space of all bounded linear operators from $X$ into $Y$. By $\mathcal{B}(X\times Y)$ we mean the Banach space of all continuous bilinear forms on $X\times Y$ with values in $\mathbb{K}$. We say that $T \in \mathcal{L}(X,Y)$ is {\it norm-attaining} if there exists $x_0 \in S_X$ such that $\|T(x_0)\| = \|T\|$. Analogously,
$B\in\mathcal{B}(X\times Y)$ is norm-attaining if there exist $x_0\in S_X$, $y_0\in S_Y$ such that $\abs{B(x_0,y_0)}=\norm{B}$.
We denote by $\NA(X, Y)$ the linear operators which attain the norms and by $\NA(X \times Y)$ the subset of $\mathcal{B}(X \times Y)$ of all norm-attaining bilinear forms.

For a comprehensive treatment of tensor products and nuclear operators we refer to \cite{deflo, Ryan}.
The \emph{projective} and \emph{injective tensor products} of $X$ and $Y$, denoted respectively
by $X\pten Y$ and $X\iten Y$, are defined as the completion of the algebraic tensor product $X\otimes Y$ endowed with the norms
\begin{equation} \label{projective-norm}
\|z\|_{\pi}
:= \inf \left\{ \sum_{i=1}^n \|x_i\|\,\|y_i\| : z=\sum_{i=1}^n x_i\otimes y_i \right\},
\end{equation}
where the infimum is taken over all such representations of $z$, and
\begin{equation*}
\Big\|\sum_{i=1}^n x_i\otimes y_i\Big\|_{\varepsilon}
= \sup \left\{ \left| \sum_{i=1}^n x^*(x_i)y^*(y_i) \right| :
x^*\in B_{X^*},\ y^*\in B_{Y^*} \right\}.
\end{equation*}
It is well known that 
\begin{itemize}
\item[(a)] $B_{X\pten Y}=\overline{\co}(\{x\otimes y:\, x\in B_X,\, y\in B_Y\})$ and 
\item[(b)] $(X\pten Y)^*=\mathcal{B}(X\times Y)=\mathcal{L}(X,Y^*)$. 
\end{itemize}

Recall that a Banach space $X$ is said to have the \emph{approximation property} (AP, for short) if, for every compact subset $K\subseteq X$ and every $\varepsilon>0$, there
exists a finite-rank operator $T\colon X\to X$ such that $\|T(x)-x\|\leq \varepsilon$ for all $x\in K$. If $T$ can be always chosen to satisfy $\norm{T}\leq\lambda$ for some $\lambda<\infty$, then $X$ is said to have the \emph{bounded approximation property} (BAP, for short).

A Banach space $X$ is said to have the \emph{Radon-Nikod\'ym property} (RNP, for short) if the classical Radon-Nikod\'ym theorem holds for $X$. That is, if $\Sigma$ is a $\sigma$-algebra of subsets of a set $\Omega$ and $\mu:\Sigma\to X$ is a vector measure of bounded variation that is absolutely continuous with respect to a finite positive measure $\lambda$, then there exists a $\lambda$-Bochner integrable function $f: \Omega\to X$ such that 
\[
\mu(E) = \int_E f \, d\lambda
\]
for every $E \in \Sigma$. Many classes of Banach spaces, including reflexive spaces and separable dual spaces, are known to possess the RNP. Together with the BAP, the RNP plays an important role in the theory of tensor products of Banach spaces, particularly in the duality between injective and projective tensor product spaces.

We will also need some notions of measure theory and vector-valued integration; we recommend \cite{Bogachev} and \cite{DU} for reference. By a measure $\mu$ on a Hausdorff space $\mathcal{X}$ we always mean a scalar-valued countably additive measure, and we assume that it is Borel (i.e., it is defined on all Borel sets of $\mathcal{X}$). We say that $\mu$ is {\it Radon} if it is regular, tight, and finite (i.e., it has finite total variation). If $\mu$ is positive, then this is equivalent to $\mu(\mathcal{X})<\infty$ and
$$
\mu(E) = \sup\set{\mu(K) \,:\, K\subset E\text{ compact}} = \inf\set{\mu(U) \,:\, U\supset E\text{ open}}
$$
for all Borel sets $E\subset\mathcal{X}$. Given two Borel measures $\mu$ and $\lambda$ on $\mathcal{X}$, with $\mu$ positive, we write $\lambda\ll\mu$ if $\lambda$ is absolutely continuous with respect to $\mu$, that is, $\lambda(E)=0$ for every Borel set $E\subset\mathcal{X}$ for which $\mu(E)=0$. Finally, we denote by $|\mu|$ the \emph{total variation} of $\mu$, and we say $\mu$ is \emph{concentrated on} a Borel set $A$ if $\mu(E)=\mu(E\cap A)$ for every Borel subset $E$ of $\mathcal{X}$.

Given a Borel measure $\mu$ on $\mathcal{X}$, a function $f:\mathcal{X}\to X$ taking values in a Banach space $X$ is {\it $\mu$-measurable} if and only if it is
\begin{itemize}
\item {\it weakly measurable}, i.e., $x^*\circ f:\mathcal{X}\to\R$ is Borel for every $x^*\in X^*$, and
\item {\it $\mu$-essentially separably valued}, i.e., there is a $\mu$-null Borel set $N\subset\mathcal{X}$ such that $\set{f(x):x\notin N}$ is contained in a separable subspace of $X$.
\end{itemize}
If $f$ is $\mu$-measurable and moreover satisfies $\int_\mathcal{X}\norm{f}\,d\abs{\mu}<\infty$, then $f$ is said to be {\it $\mu$-Bochner integrable}. The Bochner integral $\int_\mathcal{X}f\,d\mu$ is then a well-defined element of $X$ that can be obtained as the limit of integrals of simple functions approximating $f$ in the usual way. If so, then for any $T\in\mathcal{L}(X,Y)$ the map $Tf=T\circ f:\mathcal{X}\to Y$ is also $\mu$-Bochner integrable and the equality $\int_\mathcal{X}Tf\,d\mu=T\pare{\int_\mathcal{X}f\,d\mu}$ holds true.

If $f:\mathcal{X}\to\mathcal{Y}$ is a Borel map between Hausdorff spaces $\mathcal{X}$ and $\mathcal{Y}$, and $\mu$ is a Borel measure on $\mathcal{X}$, then the {\it pushforward measure} $f_\sharp\mu$ is a Borel measure on $\mathcal{Y}$ defined by $f_\sharp\mu(E)=\mu(f^{-1}(E))$ for Borel $E\subset\mathcal{Y}$. If $\mu$ is positive, then so is $f_\sharp\mu$ and moreover $\norm{f_\sharp\mu}=\norm{\mu}$.
Also, if $\mu$ is Radon and $f$ is continuous, then $f_\sharp\mu$ is Radon as well.
Pushforward measures satisfy the following ``change-of-variables'' property: if $g:\mathcal{Y}\to X$ is weakly measurable and such that $g\circ f$ is $\mu$-Bochner integrable, then $g$ is $f_\sharp\mu$-Bochner integrable and $\int_\mathcal{Y}g\,d(f_\sharp\mu) = \int_\mathcal{X}(g\circ f)\,d\mu$.

\section{Integral projective norm-attaining tensors} \label{section:ina}

In this section, we introduce the notion of integral norm-attaining tensors, which can be viewed as a natural extension of projective norm attainment obtained by replacing discrete tensor representations as in (\ref{eqNA}) below with continuous integral representations in $X\pten Y$ as in (\ref{eqINA}).

\subsection{Definition} The following definition is equivalent to the one mentioned in the introduction: given two Banach spaces $X$ and $Y$, we say that an element $u \in X \pten Y$ {\it attains its projective norm} or it is a {\it norm-attaining tensor} (see \cite{DJRR}) if there are a sequence $(x_n, y_n) \subseteq B_X \times B_Y$ and $\lambda_n \geq 0$ such that 
\begin{equation} \label{eqNA}
    u = \sum_{n=1}^{\infty} \lambda_n x_n \otimes y_n \ \ \ \ \ \mbox{and} \ \ \ \ \ \sum_{n=1}^{\infty} \lambda_n = \|u\|_{\pi} .
\end{equation}
We denote by $\NA_{\pi}(X \pten Y)$ the subset of $X \pten Y$ of such elements. 

We now introduce a ``continuous version'' of projective norm attainment as follows.

\begin{definition} \label{def:INA} Let $X, Y$ be Banach spaces. We say that $u\in X\pten Y$ is an {\it integral projective norm-attaining tensor} if there exists a finite positive Borel measure $\mu$ on $B_X \times B_Y$ such that the mapping
$$
\varphi:B_X\times B_Y\to X\pten Y \ \ \ \mbox{defined by} \ \ \ \varphi(x,y)=x\otimes y
$$
is $\mu$-Bochner integrable, and $u$ can be represented by the Bochner integral
\begin{equation} \label{eqINA}
 u = \int_{B_X \times B_Y} x \otimes y \ d \mu(x,y) \ \ \ \ \ \mbox{and} \ \ \ \ \ \|\mu\| = \|u\|_{\pi}.
\end{equation}
In that case, we say that $u$ is {\it witnessed} or {\it represented by $\mu$}.
\end{definition}

We will write $\INA_{\pi}(X \pten Y)$ to denote the subset of $X \pten Y$ of all integral projective norm-attaining tensors. Let us observe that projective norm-attaining tensors arise as a particular case of Definition \ref{def:INA}. Indeed, if $u \in X \pten Y$ can be written as in (\ref{eqNA}), then (\ref{eqINA}) corresponds to choosing the discrete measure
\[
\mu=\sum_{n=1}^{\infty} \lambda_n\, \delta_{(x_n,y_n)} 
\]
where $\delta_{(x_n,y_n)}$ denotes the Dirac measure concentrated at the element $(x_n,y_n)\in B_X\times B_Y$. In this sense, integral projective norm-attaining tensors extend formally the classical notion by allowing continuous measure representations instead of purely discrete ones. In other words, for all Banach spaces $X$ and $Y$, we have that 
\[
    \NA_{\pi}(X \pten Y) \subseteq \INA_{\pi}(X \pten Y) \subseteq X \pten Y.
\]
In particular, whenever $\NA_{\pi}(X \pten Y) = X \pten Y$, we have that $\INA_{\pi}(X \pten Y) = X \pten Y$ and, by using the same inclusions, whenever $\overline{\NA_{\pi}(X \pten Y)} = X \pten Y$ we have that $\overline{\INA_{\pi}(X \pten Y)} = X \pten Y$. In fact, as we will prove in a moment (see Theorem \ref{density1} below), it turns out that 
\[
\INA_{\pi}(X \pten Y) \subseteq \overline{\NA_{\pi}(X \pten Y)} 
\]
for all Banach spaces $X,Y$. This shows that the problem of determining whether $\NA_{\pi}$ is dense in $X \pten Y$ is equivalent to the corresponding problem for the set $\INA_{\pi}$ (we send the reader to Corollary \ref{INA-NA-are-dense-equivalence}).

Of course, this definition is only meaningful if it does not already agree with the discrete version of norm-attaining tensors. So the following question is natural.

\begin{question}\label{q:na=ina}
Is $\INA_{\pi}(X \pten Y) = \NA_{\pi}(X \pten Y)$ for all Banach spaces $X,Y$?
\end{question}

\noindent Unfortunately, we have not been able to answer this question. We strongly suspect that the answer is negative, but either answer (positive or negative) would be very interesting.

\begin{remark} \label{remark:INA-properties}
Concerning Definition~\ref{def:INA}, we record the following observations.

\begin{itemize}

\item[(a)]
It is no loss of generality to restrict ourselves to \emph{positive} measures in \eqref{eqINA}. Indeed, let $\mu$ be a scalar-valued Borel measure on $B_X\times B_Y$ satisfying \eqref{eqINA}. By the Radon-Nikodým theorem, there exists a Borel function $h\in L_1(|\mu|)$ such that $d\mu = h\, d|\mu|$ and $|h|=1$ $|\mu|$-almost everywhere. Define the Borel mapping $f\colon B_X\times B_Y \to B_X\times B_Y$ by $f(x,y) = (h(x,y)\,x,\,y)$. Then, since $\varphi$ is continuous, we get that
\begin{align*}
u &= \int_{B_X\times B_Y} \varphi(x,y)\, h(x,y)\, d|\mu|(x,y) \\
&= \int_{B_X\times B_Y} \varphi\circ f (x,y)\, d|\mu|(x,y) \\
&= \int_{B_X\times B_Y} \varphi(x,y)\, d(f_{\sharp}|\mu|)(x,y)
\end{align*}
using change of variables. Consequently, $u$ admits a representation of the form~\eqref{eqINA} with the positive measure $f_{\sharp}|\mu|$ in place of $\mu$.

\vspace{0.2cm}
\item[(b)] We may replace ``$B_X\times B_Y$'' with ``$S_X\times S_Y$'' in Definition \ref{def:INA} as the representing positive measure $\mu$ in \eqref{eqINA}$\,$ must be concentrated on $S_X\times S_Y$. Indeed, assume on the contrary that $\mu$ assigns positive measure to $B_X\times B_Y\setminus S_X\times S_Y$. Then, by countable additivity, there exists $\alpha<1$ such that the Borel set
\[
\Omega = (B_X\times \alpha B_Y) \cup (\alpha B_X \times B_Y)
\]
satisfies $\mu(\Omega)>0$. It follows that
\begin{align*} 
\|u\|_\pi &\leq \int_{B_X\times B_Y} \|x\otimes y\|_\pi  \, d\mu(x,y) \\
&\leq \alpha\,\mu(\Omega) + \mu\left(B_X\times B_Y\setminus \Omega\right) \\
&< \|\mu\| 
\end{align*} 
which contradicts the equality $\|\mu\|=\|u\|_\pi$ in \eqref{eqINA}.

\vspace{0.2cm}
\item[(c)] Bochner integrability of $\varphi$ is a more subtle issue. Since $\varphi$ is clearly continuous and
\[
\int_{B_X\times B_Y} \|x\otimes y\|_\pi \, d\mu(x,y)
\leq \int_{B_X\times B_Y} d\mu(x,y)
= \|\mu\| < \infty ,
\]
the only unclear condition is whether $\varphi$ is $\mu$-essentially separably valued. This holds when $\mu$ is Radon, as $\mu$ is then concentrated on a $\sigma$-compact set $C\subset B_X\times B_Y$ by regularity, so $\varphi(C)\subset X\pten Y$ is again $\sigma$-compact, hence separable. It is known that every finite Borel measure on a complete metric space $M$ is Radon if (and only if) the density character of $M$ is smaller than the least real-valued measurable cardinal \cite[Proposition 7.2.10]{Bogachev}. Thus, Bochner integrability is guaranteed when $X$ and $Y$ have density less than the smallest measurable cardinal. This is a purely axiomatic issue, as the existence of measurable cardinals is independent of ZFC.

\end{itemize}
\end{remark}

\subsection{Basic properties} We will now describe some general properties of integral norm-attaining tensors. Some of them are simply integral counterparts of properties of norm-attaining tensors established in \cite{DJRR,GGR} and have a similar proof.

We start with the following characterization of elements of $\INA_{\pi}(X\pten Y)$. This is the counterpart of \cite[Theorem 3.1]{DJRR} and the proof follows similar arguments. We present it for the sake of completeness.

\begin{proposition} \label{theo:1-INA} Let $X, Y$ be Banach spaces, and let $\mu$ be a positive Borel measure on $B_X\times B_Y$ such that the Bochner integral
\begin{equation*}
    u = \int_{B_X \times B_Y} x \otimes y \ d\mu(x,y)
\end{equation*}
is well-defined. The following statements are equivalent.
\begin{itemize}
\item[(1)] $u \in \INA_{\pi}(X \pten Y)$, witnessed by $\mu$.
\item[(2)] There is $B \in S_{\mathcal{B}(X,Y)}$ such that $B(x,y) = 1$ for $\mu$-almost all $(x,y)\in B_X\times B_Y$.
\item[(3)] For every $B\in S_{\mathcal{B}(X,Y)}$ with $B(u) = \|u\|_{\pi}$, we have that $B(x,y) = 1$ for $\mu$-almost all $(x,y)\in B_X\times B_Y$.
\end{itemize}
\end{proposition}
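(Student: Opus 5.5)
The plan is to prove the cycle (1)$\Rightarrow$(3)$\Rightarrow$(2)$\Rightarrow$(1). Throughout, two facts are used repeatedly. First, every $B\in\mathcal{B}(X\times Y)=(X\pten Y)^*$ is a bounded linear functional on $X\pten Y$, so it commutes with the Bochner integral:
\[
B(u)=\int_{B_X\times B_Y} B(x,y)\,d\mu(x,y).
\]
Second, $|B(x,y)|\le \|B\|$ for all $(x,y)\in B_X\times B_Y$. If $u=0$, then (1) forces $\|\mu\|=0$, and all three statements hold or are vacuous once this degenerate case is checked. So I assume $u\neq 0$.

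\emph{(1)$\Rightarrow$(3).} Suppose $\|\mu\|=\|u\|_\pi$, and let $B\in S_{\mathcal{B}(X,Y)}$ satisfy $B(u)=\|u\|_\pi$. By the first fact,
\[
\int (1-\operatorname{Re}B(x,y))\,d\mu=\|\mu\|-\operatorname{Re}B(u)=0 .
\]
The integrand is nonnegative, because $\operatorname{Re}B(x,y)\le|B(x,y)|\le 1$. Hence $\operatorname{Re}B(x,y)=1$ for $\mu$-almost every $(x,y)$. Combined with $|B(x,y)|\le1$, this gives $B(x,y)=1$ $\mu$-almost everywhere.

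\emph{(3)$\Rightarrow$(2).} By Hahn--Banach there is a norming functional $B\in S_{(X\pten Y)^*}$ with $B(u)=\|u\|_\pi$, and (3) applies to this $B$. The only point to check is that this direction does not secretly need (1). It does not: (3) is a hypothesis about every norming $B$, and at least one norming $B$ exists.

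\emph{(2)$\Rightarrow$(1).} If $B(x,y)=1$ $\mu$-almost everywhere with $\|B\|=1$, then $B(u)=\int B\,d\mu=\|\mu\|$. This yields
\[
\|\mu\|=|B(u)|\le\|u\|_\pi\le\int\|x\otimes y\|_\pi\,d\mu\le\|\mu\|,
\]
where the middle inequality is the norm estimate for the Bochner integral. So $\|\mu\|=\|u\|_\pi$, and $u\in\INA_\pi(X\pten Y)$ witnessed by $\mu$. No step here is really an obstacle. The only care needed is the passage from $\int(1-\operatorname{Re}B)\,d\mu=0$ to $B=1$ almost everywhere in the complex case, which the modulus bound handles.
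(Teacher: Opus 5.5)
Your proof is correct and follows the paper's argument: the same cycle (1)$\Rightarrow$(3)$\Rightarrow$(2)$\Rightarrow$(1), passing $B$ through the Bochner integral and using the sandwich $\|\mu\|=|B(u)|\le\|u\|_\pi\le\|\mu\|$ for (2)$\Rightarrow$(1). Your step $\int(1-\operatorname{Re}B)\,d\mu=0$ in (1)$\Rightarrow$(3) is a slightly cleaner route to $B=1$ $\mu$-a.e.\ than the paper's absolute-value estimate, and the separate treatment of $u=0$ is unnecessary because your argument never uses $u\neq 0$.
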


\begin{proof} Let us prove that (1) $\Rightarrow$ (3). Let $B$ as in (3). Then
$$
\|u\|_{\pi} = B(u) = \int_{B_X \times B_Y} B(x \otimes y) \ d \mu(x,y) = \int_{B_X \times B_Y} B(x,y) \ d \mu(x,y) .
$$
Taking absolute values, we get
$$
\|u\|_{\pi} \leq \int_{B_X \times B_Y} |B(x,y)| \ d \mu(x,y) \leq \int_{B_X \times B_Y} \|B\| \|x\| \|y\| \, d \mu(x,y) \leq \| \mu\| = \|u\|_{\pi}.
$$
Therefore, $B(x,y) = |B(x,y)| = 1$ $\mu$-almost everywhere. The implication (3) $\Rightarrow$ (2) is obvious. Now, let us prove that (2) $\Rightarrow$ (1). Let $B \in S_{\mathcal{B}(X,Y)}$ such that $B(x,y) = 1$ for $\mu$-almost all $(x,y)$. Then, we have that 
\begin{equation*}
    \|u\|_{\pi} \geq |B(u)| = \left| \int_{B_X \times B_Y} B(x,y) \ d \mu(x,y) \right| = \left| \int_{B_X \times B_Y} d \mu(x,y) \right| = \|\mu\|. 
\end{equation*}
On the other hand, notice that we always have that 
\begin{equation*}
    \|u\|_{\pi} \leq \int_{B_X \times B_Y} \|x\otimes y\|_\pi  \ d \mu(x,y) \leq \int_{B_X \times B_Y} d \mu(x,y) = \|\mu\|. 
\end{equation*}
This means that $u \in \INA_{\pi}(X \pten Y)$, witnessed by $\mu$.
\end{proof}

 We have the following straightforward consequence of Proposition \ref{theo:1-INA}.

\begin{corollary} \label{rem:ina_ac} Let $X, Y$ be Banach spaces and let $u \in \INA_{\pi}(X \pten Y)$ witnessed by a positive Borel measure $\mu$ as in \eqref{eqINA}. If $\mu'$ is another finite positive Borel measure on $B_X\times B_Y$ such that $\mu'\ll\mu$, then the tensor
\[
u' = \int_{B_X\times B_Y} x\otimes y\,d\mu'(x,y)
\]
is well-defined and $u' \in \INA_{\pi}(X \pten Y)$, witnessed by $\mu'$.
\end{corollary}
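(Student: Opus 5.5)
The argument has two parts: first show that the integral defining $u'$ exists as a Bochner integral, then apply the criterion (2) of Proposition~\ref{theo:1-INA} to $\mu'$.

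For well-definedness, we use Radon-Nikod\'ym. Since $\mu$ and $\mu'$ are finite positive measures with $\mu'\ll\mu$, there is a Borel function $g\geq 0$ with $g\in L_1(\mu)$ and $d\mu'=g\,d\mu$. We need $\varphi(x,y)=x\otimes y$ to be $\mu'$-Bochner integrable.
\begin{itemize}
\item Weak measurability holds because $\varphi$ is continuous.
\item Integrability of the norm is immediate: $\int\|x\otimes y\|_\pi\,d\mu'\leq\|\mu'\|<\infty$.
\item For essential separability, $\mu$-Bochner integrability of $\varphi$ gives a $\mu$-null Borel set $N$ such that $\varphi$ takes values in a separable subspace off $N$. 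By absolute continuity $\mu'(N)=0$, so the same $N$ works for $\mu'$.
\end{itemize}
Hence $\varphi$ is $\mu'$-Bochner integrable and $u'$ is well defined.

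For norm attainment, take $B\in S_{\mathcal{B}(X,Y)}$ with $B(u)=\|u\|_\pi$; it exists by Hahn-Banach. If $u=0$, then $\|\mu\|=0$, so $\mu=0$, hence $\mu'=0$ and $u'=0$, which is trivially witnessed by the zero measure. So assume $u\neq 0$. By (1)$\Rightarrow$(3) of Proposition~\ref{theo:1-INA}, $B(x,y)=1$ for $\mu$-almost all $(x,y)$. The exceptional set is Borel (by continuity of $B$) and $\mu$-null, so it is $\mu'$-null by absolute continuity. Thus $B(x,y)=1$ for $\mu'$-almost all $(x,y)$, and (2)$\Rightarrow$(1) of Proposition~\ref{theo:1-INA}, applied to $\mu'$ now that the integral is known to be well defined, gives $u'\in\INA_\pi(X\pten Y)$ witnessed by $\mu'$.

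The only point needing care is the Bochner integrability step, specifically transferring the essential separable-valuedness from $\mu$ to $\mu'$. Absolute continuity handles it directly, so I expect no real obstacle.
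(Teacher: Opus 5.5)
Your proof is correct and follows essentially the same route as the paper: you transfer essential separable-valuedness from $\mu$ to $\mu'$ via absolute continuity, obtain $B$ with $B(x,y)=1$ off a $\mu$-null (hence $\mu'$-null) set from (1)$\Rightarrow$(3) of Proposition~\ref{theo:1-INA}, and conclude by (2)$\Rightarrow$(1) applied to $\mu'$. The Radon--Nikod\'ym density $g$ you introduce is never used and can be dropped.
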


\begin{proof} Clearly, if $\varphi(x,y)=x\otimes y$ is $\mu$-essentially separably valued then it is also $\mu'$-essentially separably valued, so $u'$ is a well-defined Bochner integral, since $\mu'$ is finite. Moreover, by (1)$\Rightarrow$(2) in Proposition \ref{theo:1-INA}, we have that $\mu$ is concentrated on a set $C\subset B_X\times B_Y$ and there exists $B\in S_{\mathcal{B}(X,Y)}$ such that $B(x,y)=1$ for all $(x,y)\in C$. Since $\mu'\ll\mu$, we have that $\mu'$ is also concentrated on $C$, thus by (2)$\Rightarrow$(1) (by taking the same $B$) we get $u'\in\INA_\pi(X\pten Y)$ witnessed by $\mu'$.
\end{proof}

\begin{corollary} Let $X, Y$ be Banach spaces. If $u\in\NA_\pi(X\pten Y)$ is witnessed by the series $u = \sum_n a_nx_n\otimes y_n$, then any $u'=\sum_n a'_nx_n\otimes y_n$ with $a'_n\in [0,a_n]$ is also a norm-attaining tensor.
\end{corollary}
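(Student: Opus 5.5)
The plan is to read this off from Corollary~\ref{rem:ina_ac} applied to discrete measures. One can also argue directly through the bilinear-form characterization, and the direct route avoids any issue with repeated points $(x_n,y_n)$. Normalize as in \eqref{eqNA}: take $(x_n,y_n)\in B_X\times B_Y$ and $a_n\geq 0$ with $\sum_n a_n=\|u\|_\pi$. If the given representation instead has arbitrary $x_n,y_n$ with $\|u\|_\pi=\sum_n a_n\|x_n\|\|y_n\|$, first rescale each term. This replaces $a_n$ by $a_n\|x_n\|\|y_n\|$ and $a'_n$ by $a'_n\|x_n\|\|y_n\|$, and the condition $a'_n\in[0,a_n]$ is preserved.

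Set $\mu=\sum_n a_n\delta_{(x_n,y_n)}$ and $\mu'=\sum_n a'_n\delta_{(x_n,y_n)}$. Then $\mu$ is a finite positive Borel measure witnessing $u\in\INA_\pi(X\pten Y)$. Since $0\le a'_n\le a_n$, we have $\mu'\le\mu$ setwise, so $\mu'\ll\mu$. Corollary~\ref{rem:ina_ac} then gives that $u'=\int x\otimes y\,d\mu'$ is well defined and witnessed by $\mu'$, so $\|u'\|_\pi=\|\mu'\|=\sum_n a'_n$. Evaluating the discrete Bochner integral gives $u'=\sum_n a'_n x_n\otimes y_n$, the series converging absolutely because $\sum_n a'_n<\infty$. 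This is therefore a representation as in \eqref{eqNA} with $\sum_n a'_n=\|u'\|_\pi$, so $u'\in\NA_\pi(X\pten Y)$.

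For a self-contained version, take $B\in S_{\mathcal{B}(X,Y)}$ norming $u$; such a $B$ exists by Hahn--Banach. Proposition~\ref{theo:1-INA} (1)$\Rightarrow$(3) then gives $B(x_n,y_n)=1$ for every $n$ with $a_n>0$. For the remaining $n$ we have $a'_n=0$, so those terms contribute nothing. Hence
\[
\|u'\|_\pi\ \ge\ B(u')\ =\ \sum_n a'_n B(x_n,y_n)\ =\ \sum_n a'_n\ \ge\ \|u'\|_\pi ,
\]
so equality holds throughout and the representation of $u'$ is optimal. There is no real obstacle. The only care needed is the bookkeeping in the normalization and the observation that terms with $a_n=0$ force $a'_n=0$, so the almost-everywhere statement covers every term that matters.
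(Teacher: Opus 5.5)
Your proof is correct and follows the paper's route: the statement is placed as the discrete special case of Corollary~\ref{rem:ina_ac}, applied to $\mu'=\sum_n a'_n\delta_{(x_n,y_n)}\le\mu=\sum_n a_n\delta_{(x_n,y_n)}$. Your ``self-contained'' version is the proof of that corollary (Proposition~\ref{theo:1-INA}, (1)$\Rightarrow$(3) followed by (2)$\Rightarrow$(1)) written out for a discrete measure. The degenerate case $u=0$ is trivial, since then every $a_n=0$ and hence every $a'_n=0$.
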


Another direct consequence of Proposition \ref{theo:1-INA} (see the proof for the implication (1) $\implies$ (3)) is the following. 

\begin{corollary} \label{corollary1:INA} 
    Let $X, Y$ be Banach spaces. If $B \in \mathcal{B}(X \times Y) = (X \pten Y)^*$ attains its norm (as a functional) at an element of $\INA_{\pi}(X \pten Y)$, then $B \in \NA(X \times Y)$.
\end{corollary}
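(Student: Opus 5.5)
The plan is to apply Proposition~\ref{theo:1-INA}, implication (1)$\Rightarrow$(3), to the witnessing measure, and then to find one point of the support where $B$ equals $\norm{B}$.

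Let $u \in \INA_{\pi}(X \pten Y)$ be the element at which $B$ attains its norm as a functional. We may assume $u \neq 0$ and $B \neq 0$; the statement is trivial otherwise.

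First, normalize. Replace $B$ by $B/\norm{B}$. Multiply $u$ by a unimodular scalar $\theta$ so that $B(u)=\norm{u}_\pi$. The tensor $\theta u$ is still integral norm-attaining. To see this, push the measure forward under $(x,y)\mapsto(\theta x,y)$, exactly as in Remark~\ref{remark:INA-properties}(a); the pushed-forward measure has the same total variation and represents $\theta u$. Finally, divide $u$ by $\norm{u}_\pi$, which only rescales the measure.

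Let $\mu$ be a positive measure witnessing $u$. Since $B\in S_{\mathcal{B}(X,Y)}$ and $B(u)=\norm{u}_\pi$, Proposition~\ref{theo:1-INA}, implication (1)$\Rightarrow$(3), gives $B(x,y)=1$ for $\mu$-almost all $(x,y)\in B_X\times B_Y$.

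Now $\norm{\mu}=\norm{u}_\pi>0$, so the set where $B(x,y)=1$ has positive $\mu$-measure and in particular is nonempty. Choose a point $(x_0,y_0)$ in this set. Then $\abs{B(x_0,y_0)}=1=\norm{B}$ with $\norm{x_0}\le 1$ and $\norm{y_0}\le 1$.

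It remains to see that $x_0\in S_X$ and $y_0\in S_Y$. This follows from $1=\abs{B(x_0,y_0)}\le\norm{x_0}\,\norm{y_0}\le 1$. Alternatively, Remark~\ref{remark:INA-properties}(b) shows that $\mu$ is concentrated on $S_X\times S_Y$.

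Undoing the normalization gives $\abs{B(x_0,y_0)}=\norm{B}$ for the original $B$. Hence $B\in\NA(X\times Y)$.

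The only step needing care is passing from "$\mu$-almost everywhere" to the existence of an actual point. This step uses $\norm{\mu}>0$, which holds because $u\ne0$.
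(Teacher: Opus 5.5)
Your proposal is correct and follows the same route as the paper. The paper derives the corollary directly from the (1)$\Rightarrow$(3) argument of Proposition~\ref{theo:1-INA}, together with $\norm{\mu}=\norm{u}_\pi>0$.

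The only cosmetic difference is that the paper points to the proof of (1)$\Rightarrow$(3) rather than its statement. There the chain $\norm{u}_\pi=\abs{B(u)}\leq\int\abs{B(x,y)}\,d\mu\leq\norm{\mu}$ already gives $\abs{B(x,y)}=1$ $\mu$-a.e. So your unimodular rotation of $u$, although correctly justified, is not needed.
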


Next, we present integral counterparts of \cite[Lemma 3.9 and Proposition 3.10]{DJRR}. In \cite{DJRR}, these are used to show that there are tensor products where not every projective tensor is norm-attaining. Here, we obtain the (formally) stronger conclusion that not every tensor is integral norm-attaining. In Section \ref{section:non-norm-attaining} below we will exhibit more examples for which $\INA_{\pi}(X \pten Y) \not= X \pten Y$.

\begin{proposition} \label{prop1:INA}Let $X, Y$ be Banach spaces. If $\INA_{\pi}(X \pten Y) = X \pten Y$ then 
\begin{equation*}
    \overline{\NA(X \times Y)}^{\|\cdot\|} = \mathcal{B}(X \times Y).
\end{equation*}
In particular, there are Banach spaces $X$ and $Y$ such that $\INA_{\pi}(X \pten Y) \not= X \pten Y$.
\end{proposition}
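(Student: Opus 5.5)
The plan is to combine the Bishop--Phelps theorem, applied to $(X\pten Y)^*=\mathcal{B}(X\times Y)$, with Corollary~\ref{corollary1:INA}. By the Bishop--Phelps theorem, the set of functionals on the Banach space $X\pten Y$ that attain their norm on $B_{X\pten Y}$ is norm dense in $(X\pten Y)^*$. Under the identification $(X\pten Y)^*=\mathcal{B}(X\times Y)$, this says the following. The bilinear forms $B$ for which there is some $u\in S_{X\pten Y}$ with $|B(u)|=\|B\|$ are dense in $\mathcal{B}(X\times Y)$.

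Now assume $\INA_\pi(X\pten Y)=X\pten Y$. Take a $B$ from this dense set and a point $u$ where it attains its norm. If the scalar field is complex, we first multiply $u$ by a unimodular scalar so that $B(u)=\|B\|$; the integral representation is preserved under this rescaling, or one can simply note that $\INA_\pi$ is the whole space. Then $B$ attains its norm at an element of $\INA_\pi(X\pten Y)$. Corollary~\ref{corollary1:INA} gives $B\in\NA(X\times Y)$. The trivial case $B=0$ is norm-attaining by definition, or it can be handled directly.

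Concretely, the corollary comes from Proposition~\ref{theo:1-INA}(1)$\Rightarrow$(3) applied to $B/\|B\|$. It gives $B(x,y)=\|B\|$ for $\mu$-almost every $(x,y)$. Since $\|\mu\|=\|u\|_\pi>0$, at least one such pair exists, and by Remark~\ref{remark:INA-properties}(b) it lies in $S_X\times S_Y$. Hence the dense set of norm-attaining functionals is contained in $\NA(X\times Y)$, and therefore $\overline{\NA(X\times Y)}=\mathcal{B}(X\times Y)$.

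For the ``in particular'' part, I would invoke the known counterexamples to a Bishop--Phelps theorem for bilinear forms. These are pairs of Banach spaces $X,Y$ for which $\NA(X\times Y)$ is not dense in $\mathcal{B}(X\times Y)$. One example is Gowers' result that $\NA(X,\ell_p)$ is not dense for suitable $X$ (e.g.\ $X=d(w,1)$-type spaces). Another is the Acosta--Aguirre--Payá example with $X=Y^*$-type spaces, using $\mathcal{B}(X\times Y)=\mathcal{L}(X,Y^*)$ and the fact that norm attainment of a bilinear form $B$ coincides with norm attainment of the associated operator $X\to Y^*$ at a point where the image attains its norm. Citing \cite{AFW} (bilinear forms failing density) gives such $X,Y$. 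The contrapositive of the first part then yields $\INA_\pi(X\pten Y)\neq X\pten Y$.

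The only delicate step is making sure the cited counterexample is genuinely about bilinear forms, that is, about operators $X\to Y^*$ attaining their norm at a point whose image attains its norm. A mere operator counterexample is not enough, so I would cite the bilinear result directly, e.g.\ \cite{AFW}, as done in \cite[Proposition 3.10]{DJRR}.
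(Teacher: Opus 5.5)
Your argument is correct and matches the paper's proof: for the first claim, apply Bishop--Phelps in $(X\pten Y)^*=\mathcal{B}(X\times Y)$ and then Corollary~\ref{corollary1:INA}; for the second, take the contrapositive with a known failure of density of $\NA(X\times Y)$. Two citation fixes: the bilinear counterexample is \cite{gasp}, not \cite{AFW}; and, contrary to your closing remark, an operator counterexample into a dual space is enough, because $\NA(X\times Y)\subseteq\NA(X,Y^*)$ under $\mathcal{B}(X\times Y)=\mathcal{L}(X,Y^*)$ (so Gowers' space $G$ with $Y^*=\ell_p$ already works, as in \cite[Example~3.12]{DJRR}).
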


\begin{proof}
    By the Bishop-Phelps theorem, the set of all $B \in \mathcal{B}(X \times Y) = (X \pten Y)^*$ attaining their norm as a functional is norm-dense. If every tensor in $X\pten Y$ is integral norm-attaining, then every such $B$ also attains its norm as a bilinear form by Corollary \ref{corollary1:INA}. This proves the first claim. Now, since there  exist Banach spaces $X$ and $Y$ for which the set of norm-attaining bilinear forms $\NA(X\times Y)$ fails to be dense in $\mathcal{B}(X\times Y)$ (see, for instance, \cite[Example~3.12]{DJRR} for a list of examples), this yields Banach spaces $X$ and $Y$ where $\INA_{\pi}(X \pten Y) \not= X \pten Y$. 
\end{proof}

Next, we will extend results from \cite{GGR} concerning the behavior of $\INA_\pi$ when passing to subspaces.
Given operators $T \in \mathcal{L}(X,Z)$ and $S \in \mathcal{L}(Y,W)$, we define $T\otimes S \colon X\pten Y \to Z\pten W$ by $(T\otimes S)(x\otimes y) := T(x)\otimes S(y)$ for $x\in X,\ y\in Y$. This operator extends linearly and continuously, and satisfies that $\|T\otimes S\| = \|T\|\,\|S\|$. It is well known that if $T$ and $S$ are bounded projections, then $T\otimes S$ is also a bounded projection. In particular, if $Z\subseteq X$ is a $1$-complemented subspace, then $Z\pten Y$ is naturally a $1$-complemented subspace of $X\pten Y$ (see, for instance, \cite[Proposition~2.4]{Ryan}).

\begin{lemma} \label{lemma2:INA}
Let $X, Y$ be Banach spaces. Let $Z$ be a $1$-complemented subspace of $Y$. Then
\[
(X\pten Z)\cap \INA_{\pi}(X\pten Y)=\INA_{\pi}(X\pten Z).
\]
\end{lemma}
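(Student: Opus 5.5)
Let $P\colon Y\to Y$ be a norm-one projection onto $Z$ and let $J\colon Z\to Y$ be the inclusion. Since $Z$ is $1$-complemented, $J\otimes\id_X$ (more precisely $\id_X\otimes J$) is an isometric embedding of $X\pten Z$ into $X\pten Y$, with left inverse $\id_X\otimes P$ of norm one. In particular $\|u\|_{\pi,X\pten Z}=\|u\|_{\pi,X\pten Y}$ for $u\in X\pten Z$. The statement therefore makes sense, and the proof splits into two inclusions.

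For ``$\supseteq$'', take $u\in\INA_\pi(X\pten Z)$ witnessed by a positive measure $\mu$ on $B_X\times B_Z$. Let $\iota\colon B_X\times B_Z\to B_X\times B_Y$ be the continuous inclusion and set $\nu=\iota_\sharp\mu$, so that $\|\nu\|=\|\mu\|$. The map $x\otimes y$ on $B_X\times B_Y$, composed with $\iota$, equals $(\id_X\otimes J)\circ\varphi_Z$. This is $\mu$-Bochner integrable, being a bounded operator applied to a Bochner integrable map. By the change-of-variables property, $\varphi_Y$ is $\nu$-Bochner integrable and
\[
\int x\otimes y\,d\nu=(\id_X\otimes J)\Big(\int x\otimes z\,d\mu\Big)=u .
\]
Since $\|\nu\|=\|\mu\|=\|u\|_\pi$ by isometry, $u\in\INA_\pi(X\pten Y)$.

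For ``$\subseteq$'', take $u\in X\pten Z$ with $u\in\INA_\pi(X\pten Y)$ witnessed by $\mu$ on $B_X\times B_Y$. Define the continuous map $g\colon B_X\times B_Y\to B_X\times B_Z$ by $g(x,y)=(x,Py)$, which is well defined because $\|P\|\le 1$, and set $\nu=g_\sharp\mu$. Then $\varphi_Z\circ g=(\id_X\otimes P)\circ\varphi_Y$ is $\mu$-Bochner integrable. Change of variables gives
\[
\int x\otimes z\,d\nu=(\id_X\otimes P)\Big(\int x\otimes y\,d\mu\Big)=(\id_X\otimes P)(u)=u,
\]
where the last equality holds because $u\in X\pten Z$ and $\id_X\otimes P$ fixes $X\pten Z$: it fixes elementary tensors there and is continuous. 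Finally $\|\nu\|=\|\mu\|=\|u\|_{\pi,X\pten Y}=\|u\|_{\pi,X\pten Z}$.

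The only point needing care is the measurability and Borel-ness bookkeeping in the change-of-variables step, namely that $\varphi_Z$ is weakly measurable and that the pushforwards are Borel. Both follow from the continuity of the maps $\iota$, $g$, $\varphi_Y$ and $\varphi_Z$. The isometric identification of $X\pten Z$ inside $X\pten Y$ is the standard fact quoted before the lemma.
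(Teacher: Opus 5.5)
Your proof is correct and follows the paper's argument: for the nontrivial inclusion the paper likewise pushes $\mu$ forward under $(x,y)\mapsto(x,Py)$ and uses $\id_X\otimes P$ to show that the new measure still represents $u$ with the same total mass. The paper simply calls the reverse inclusion immediate, whereas you spell it out via the pushforward under the inclusion $B_X\times B_Z\hookrightarrow B_X\times B_Y$ and the isometric embedding of $X\pten Z$ into $X\pten Y$.
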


\begin{proof}
We only need to prove the inclusion
\[
(X\pten Z)\cap \INA_{\pi}(X\pten Y)\subseteq \INA_{\pi}(X\pten Z) 
\]
as the other one is immediate. Let $u\in (X\pten Z)\cap \INA_{\pi}(X\pten Y)$. Viewing $u$ as an element of $X\pten Y$, the assumption $u\in \INA_{\pi}(X\pten Y)$ yields the existence of a positive Borel measure $\mu$ on $B_X\times B_Y$ such that
\[
u=\int_{B_X\times B_Y} x\otimes y\, d\mu(x,y)
\qquad\text{and}\qquad
\|\mu\|=\|u\|_{\pi}.
\]
Since $Z$ is $1$-complemented in $Y$, there exists a bounded projection $P\colon Y\to Z$ with $\|P\|=1$. So, consider the operator
\[
Q:=\id_X\otimes P \colon X\pten Y \to X\pten Z,
\]
which is a bounded projection satisfying $\|Q\|=1$. By linearity and continuity of $Q$, together with the properties of the Bochner integral, we obtain
\begin{align*}
u
= Q(u)
&= Q\!\left(\int_{B_X\times B_Y} x\otimes y\, d\mu(x,y)\right) \\
&= \int_{B_X\times B_Y} Q(x\otimes y)\, d\mu(x,y) \\
&= \int_{B_X\times B_Y} x\otimes Py\, d\mu(x,y).
\end{align*}
Note that this is well-defined as $P$ is non-expansive.

Let $\mu'$ denote the pushforward of $\mu$ under the mapping $(x,y)\mapsto (x,Py)$, which defines a positive Borel measure on $B_X\times B_Z$. Then the above identity can be rewritten as
\[
u=\int_{B_X\times B_Z} x\otimes z\, d\mu'(x,z).
\]
Moreover, since $\mu$ is positive, we have that $\|\mu'\|=\|\mu\|=\|u\|_{\pi}$. Consequently, $\mu'$ witnesses that $u\in \INA_{\pi}(X\pten Z)$. This completes the proof.
\end{proof}

As an immediate consequence of Lemma \ref{lemma2:INA}, we have the following counterpart of \cite[Lemma 3.1]{GGR}.

\begin{corollary} \label{cor1:INA} Let $X, Y$ be Banach spaces. Suppose that $Z$ is a 1-complemented subspace of $Y$. If $\INA_{\pi}(X \pten Y) = X \pten Y$, then $\INA_{\pi}(X \pten Z) = X \pten Z$.
\end{corollary}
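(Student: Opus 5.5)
The statement follows almost directly from Lemma~\ref{lemma2:INA}, so the plan is to reduce to it. Fix an arbitrary $u\in X\pten Z$; we must show $u\in\INA_{\pi}(X\pten Z)$.

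First, regard $u$ as an element of $X\pten Y$. Since $Z$ is $1$-complemented in $Y$, the canonical map $X\pten Z\to X\pten Y$ induced by the inclusion $Z\hookrightarrow Y$ is an isometric embedding (cf.\ \cite[Proposition~2.4]{Ryan}). Thus $u$ is a legitimate element of $X\pten Y$ with the same projective norm. This identification is exactly what makes the set $(X\pten Z)\cap\INA_{\pi}(X\pten Y)$ in Lemma~\ref{lemma2:INA} meaningful, so no further work is needed here.

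By the hypothesis $\INA_{\pi}(X\pten Y)=X\pten Y$, the element $u$ belongs to $\INA_{\pi}(X\pten Y)$. Hence
\[
u\in (X\pten Z)\cap \INA_{\pi}(X\pten Y)=\INA_{\pi}(X\pten Z),
\]
where the equality is Lemma~\ref{lemma2:INA}. Since $u$ was arbitrary, $X\pten Z\subseteq\INA_{\pi}(X\pten Z)$, and the reverse inclusion is trivial.

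There is no real obstacle. The only point deserving a remark is the one already handled inside Lemma~\ref{lemma2:INA}: the pushed-forward measure under $(x,y)\mapsto(x,Py)$ lives on $B_X\times B_Z$, has the same total mass, and preserves Bochner integrability because $\id_X\otimes P$ is a norm-one operator.
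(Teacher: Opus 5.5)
Your proposal is correct and is exactly the paper's argument: the paper states this corollary as an immediate consequence of Lemma~\ref{lemma2:INA}. It intersects the hypothesis $\INA_{\pi}(X\pten Y)=X\pten Y$ with $X\pten Z$, viewed isometrically inside $X\pten Y$ via the norm-one projection.
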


\subsection{Finitely norm-attaining tensors}
Every integral norm-attaining tensor can be approximated by ``classical'' norm-attaining tensors. In fact, a stronger statement holds: such an approximation can be achieved by \emph{finitely norm-attaining tensors}, that is, tensors $u\in X \pten Y$ that admit a finite optimal representation of the form
$$
u = \sum_{k=1}^n x_k \otimes y_k \qquad\mbox{such that}\qquad \sum_{k=1}^n \norm{x_k}\norm{y_k} = \norm{u}_\pi .
$$
Let us denote by $\FNA_{\pi}(X \pten Y)$ the set of {finitely norm-attaining tensors} in $X \pten Y$.

Clearly all $\FNA_\pi$ tensors have finite rank and are norm-attaining. Both reverse implications fail: there are non-norm-attaining tensors in $L_1\pten \ell_2^2=L_1\otimes \ell_2^2$ \cite[Example 3.12(a)]{DJRR}, and every element of $\ell_1\pten Y$ is norm-attaining including those with infinite rank \cite[Proposition 3.6]{DJRR}. 
On the other hand, we do not know whether the equality
$$
\FNA_\pi(X\pten Y)=\NA_\pi(X\pten Y)\cap (X\otimes Y)
$$
holds in general, outside of the easy cases where $X,Y$ are finite-dimensional \cite[Proposition 3.5]{DJRR} and where $X$ is a finite-dimensional polyhedral space and $Y$ is a dual space \cite[Theorem 4.1]{DGJR}.

\begin{theorem} \label{density1} Let $X$ and $Y$ be Banach spaces. Then, every element of $\INA_{\pi}(X \pten Y)$ can be approximated in norm by elements of $\FNA_{\pi}(X \pten Y)$. In other words,
\[
\INA_\pi (X\pten Y) \subseteq \overline{\FNA_\pi (X\pten Y) }.
\]
\end{theorem}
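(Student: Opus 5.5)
Let $u\in\INA_\pi(X\pten Y)$ be witnessed by a positive Borel measure $\mu$ on $B_X\times B_Y$. We may assume $u\neq 0$. By Proposition~\ref{theo:1-INA}, every $B\in S_{\mathcal{B}(X\times Y)}$ with $B(u)=\|u\|_\pi$ satisfies $B(x,y)=1$ for $\mu$-almost every $(x,y)$. Such a $B$ exists by Hahn--Banach. Fix one, and let $C=\{(x,y)\in B_X\times B_Y: B(x,y)=1\}$; this is a Borel set on which $\mu$ is concentrated.

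The plan is to discretize the Bochner integral while keeping the atoms inside $C$. Since $\varphi(x,y)=x\otimes y$ is $\mu$-Bochner integrable, it can be approximated in $L_1(\mu; X\pten Y)$ by simple functions. Moreover, we can arrange that each simple function takes values of the form $x_k\otimes y_k$ with $(x_k,y_k)\in C$.

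Concretely, fix $\varepsilon>0$. Let $N$ be a $\mu$-null set such that $\varphi(C\setminus N)$ is separable, and choose a countable dense subset $\{\varphi(c_j)\}_j$ of $\varphi(C\setminus N)$ with $c_j=(x_j,y_j)\in C\setminus N$. Define the Borel sets
\[
E_j=\{(x,y)\in C\setminus N: \|x\otimes y - x_j\otimes y_j\|_\pi<\varepsilon\}\setminus\bigcup_{i<j}E_i .
\]
These sets are Borel because $\varphi$ is continuous, and they partition $C\setminus N$. Then
\[
\Big\|u-\sum_j \mu(E_j)\,x_j\otimes y_j\Big\|_\pi\le \int \|\varphi-\textstyle\sum_j \varphi(c_j)\mathbf 1_{E_j}\|\,d\mu\le \varepsilon\|\mu\|.
\]
Next truncate to a finite sum: choose $n$ with $\sum_{j>n}\mu(E_j)<\varepsilon$, using $\sum_j\mu(E_j)=\|\mu\|<\infty$. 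Set $v=\sum_{j=1}^n\mu(E_j)\,x_j\otimes y_j$. Since $\|x_j\otimes y_j\|\le 1$, we get $\|u-v\|_\pi\le \varepsilon\|\mu\|+\varepsilon$.

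It remains to check that $v\in\FNA_\pi(X\pten Y)$, and this is where $(x_j,y_j)\in C$ is used. We have
\[
B(v)=\sum_{j\le n}\mu(E_j)B(x_j,y_j)=\sum_{j\le n}\mu(E_j)\ge \sum_{j\le n}\mu(E_j)\|x_j\|\|y_j\|\ge\|v\|_\pi\ge B(v),
\]
using $\|B\|=1$. So all the inequalities are equalities, and the representation is optimal. Incidentally this also forces $\|x_j\|=\|y_j\|=1$, which is consistent with Remark~\ref{remark:INA-properties}(b).

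The main obstacle is the step of selecting approximating simple functions whose values are tensors of pairs lying in $C$, rather than arbitrary elements of $X\pten Y$. Generic simple approximants of a Bochner integrable function have values that need not be elementary tensors at all. Picking the countable dense set from $\varphi(C\setminus N)$ itself resolves this. The only technical care needed is the measurability of the $E_j$ and ensuring the set is nonempty, which follows from $\mu\neq 0$.
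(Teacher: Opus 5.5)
Your proof is correct and follows essentially the same route as the paper: you build a countably-valued approximation of $\varphi$ from a dense sequence of elementary tensors $x_n\otimes y_n$, chosen off the null set where $B(x,y)\neq 1$. You then truncate and check optimality via $\|v'\|_\pi\ge |B(v')|=\sum_n\mu(E_n)$. The differences are cosmetic: you partition $C\setminus N$ directly instead of putting the null set into $E_0$, and you obtain $B$ from Hahn--Banach and condition (3) rather than from condition (2) of Proposition~\ref{theo:1-INA}.
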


\begin{proof} Let $u \in \INA_\pi (X \pten Y)$ and $\varepsilon>0$ be given. Let $\mu$ be a positive Borel measure on $B_X\times B_Y$ witnessing $u$ as in \eqref{eqINA}. By assumption, $\varphi:(x,y)\mapsto x\otimes y$ is $\mu$-Bochner integrable and hence $\mu$-essentially separably valued, i.e., there exists a $\mu$-null Borel set $N_1\subset B_X\times B_Y$ such that $\varphi(B_X\times B_Y\setminus N_1)$ is separable. Also, by Proposition \ref{theo:1-INA} there exist a $\mu$-null Borel set $N_2\subset B_X\times B_Y$ and $B \in S_{\mathcal{B} (X,Y)}$ such that $B(x,y)=1$ for all $(x,y) \in B_X \times B_Y\setminus N_2$. Set $N=N_1\cup N_2$; we have $\mu(N)=0$.

We now define a countably valued Borel function $f:B_X\times B_Y\to X\pten Y$ approximating $\varphi$ as follows. Let $(z_n)$ be a dense sequence in $\varphi(B_X\times B_Y\setminus N)$, so we may write $z_n=x_n\otimes y_n$ with $(x_n,y_n)\in B_X\times B_Y\setminus N$. Let $E_0=N$, and iteratively define
$$
E_n = \set{(x,y)\in B_X\times B_Y \,:\, \norm{\varphi(x,y) - z_n}\leq\varepsilon} \setminus (E_0\cup E_1\cup\ldots\cup E_{n-1})
$$
for $n\geq 1$. Note that the sets $(E_n)_{n=0}^\infty$ define a partition of $B_X\times B_Y$ into disjoint Borel sets. Finally, set
\[
f(x,y) = \sum_{n=1}^\infty z_n \, \chi_{E_n} (x,y) ;
\]
that is, $f(x,y)=x_n\otimes y_n$ for $(x,y)\in E_n$, $n\geq 1$ and $f(x,y)=0$ for $(x,y)\in N$. By definition, we have $\norm{\varphi-f}\leq\varepsilon$ $\mu$-a.e. (everywhere except on $N$).

Now let
\[
v = \int_{B_X\times B_Y} f\,d\mu = \sum_{n=1}^\infty \mu (E_n) \, x_n\otimes y_n .
\]
and find $k \in \mathbb{N}$ such that $\sum_{n=k+1}^\infty \mu (E_n) < \varepsilon$. Define $v' \in X \pten Y$ by 
\[
v' = \sum_{n=1}^k \mu(E_n) \, x_n \otimes y_n.
\]
Then 
\[
\|v-v'\|_\pi \leq \sum_{n=k+1}^\infty \mu(E_n) < \varepsilon. 
\]
Moreover, we have $\|v'\|_\pi \leq \sum_{n=1}^{k} \mu(E_n)$. On the other hand, 
\[
\|v'\|_\pi \geq | B(v') | = \left| \sum_{n=1}^k \mu(E_n) B(x_n,y_n) \right| = \sum_{n=1}^k \mu(E_n)
\]
as $(x_n,y_n)\in B_X\times B_Y\setminus N_2$ and hence $B(x_n,y_n)=1$. This shows that $v' \in \FNA_\pi (X \pten Y)$. Finally, since $\|\varphi-f\| \leq \varepsilon$ $\mu$-a.e., we have 
\[
\|u-v\|_\pi = \norm{\int_{B_X\times B_Y}(\varphi-f)\,d\mu} \leq \int_{B_X\times B_Y} \norm{\varphi-f}\,d\mu \leq \varepsilon\norm{\mu} = \varepsilon\norm{u}_\pi.
\]
Consequently, $\|u-v'\|_\pi \leq \|u-v\|_\pi+\|v-v'\|_\pi < \varepsilon\|u\|_\pi + \varepsilon.$
\end{proof}

As an immediate consequence of Theorem \ref{density1} we have the following result which says that a Bishop-Phelps type theorem for $\NA_{\pi}(X \pten Y)$ is equivalent to a Bishop-Phelps type theorem for $\INA_{\pi}(X \pten Y)$.

\begin{corollary} \label{INA-NA-are-dense-equivalence} Let $X$ and $Y$ be Banach spaces. We have that 
\begin{equation*}
  \overline{\NA_{\pi}(X \pten Y)}^{\|\cdot\|_{\pi}} = X \pten Y \iff \overline{\INA_{\pi}(X \pten Y)}^{\|\cdot\|_{\pi}} = X \pten Y.
\end{equation*}
\end{corollary}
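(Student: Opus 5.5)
The statement follows from the chain of inclusions
\[
\FNA_\pi(X\pten Y)\subseteq \NA_\pi(X\pten Y)\subseteq \INA_\pi(X\pten Y)\subseteq \overline{\FNA_\pi(X\pten Y)},
\]
together with the fact that taking norm closures preserves inclusions. I would prove the two implications separately.

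\emph{Forward implication.} The inclusion $\NA_\pi(X\pten Y)\subseteq\INA_\pi(X\pten Y)$ was observed right after Definition~\ref{def:INA}. A representation as in \eqref{eqNA} yields the discrete measure $\mu=\sum_n\lambda_n\delta_{(x_n,y_n)}$. This measure has $\|\mu\|=\|u\|_\pi$, and $\varphi$ is $\mu$-Bochner integrable because $\mu$ is concentrated on a countable set. Taking closures, density of $\NA_\pi$ gives density of $\INA_\pi$.

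\emph{Reverse implication.} Assume $\overline{\INA_\pi(X\pten Y)}=X\pten Y$. Theorem~\ref{density1} gives $\INA_\pi(X\pten Y)\subseteq\overline{\FNA_\pi(X\pten Y)}$. A finite optimal representation is in particular a representation as in \eqref{eqNA}: after normalizing the vectors and setting $\lambda_k=\|x_k\|\|y_k\|$, we get $\FNA_\pi(X\pten Y)\subseteq\NA_\pi(X\pten Y)$. Therefore
\[
X\pten Y=\overline{\INA_\pi}\subseteq\overline{\overline{\FNA_\pi}}=\overline{\FNA_\pi}\subseteq\overline{\NA_\pi},
\]
which gives the claim.

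Nothing here is a real obstacle. The only substantive input is Theorem~\ref{density1}, which is already proved. The one point to check is that zero terms in the finite representation cause no problem when normalizing to $B_X\times B_Y$. Such terms can simply be dropped, or taken with $\lambda_k=0$.
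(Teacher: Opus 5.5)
Your proof is correct and follows the same route as the paper, which deduces the corollary directly from Theorem~\ref{density1}. You use the inclusions $\NA_\pi(X\pten Y)\subseteq\INA_\pi(X\pten Y)$ (noted after Definition~\ref{def:INA}) and $\INA_\pi(X\pten Y)\subseteq\overline{\FNA_\pi(X\pten Y)}\subseteq\overline{\NA_\pi(X\pten Y)}$, and then take norm closures.
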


\begin{remark} In \cite[Section~5]{DJRR} it is shown that there exist tensors which cannot be approximated by norm-attaining tensors. By Corollary~\ref{INA-NA-are-dense-equivalence}, the same counterexample yields the existence of Banach spaces $X$ and $Y$ such that
\[
\overline{\INA_{\pi}(X\pten Y)} \neq X\pten Y.
\]
In particular, since $X\otimes Y$ is dense in $X\pten Y$, there exist finite-rank tensors which do not belong to $\INA_{\pi}(X\pten Y)$, which is the counterpart of \cite[Proposition 5.5]{DJRR}.
\end{remark}

By a combination of Corollary \ref{INA-NA-are-dense-equivalence} and \cite[Lemma 4.1]{GGR}, we also get the following result.

\begin{corollary} \label{cor2:INA} Let $X, Y$ be Banach spaces. Suppose that $Z$ is a 1-complemented subspace of $Y$. If $\overline{\INA_{\pi}(X \pten Y)} = X \pten Y$, then $\overline{\INA_{\pi}(X \pten Z)} = X \pten Z$.
\end{corollary}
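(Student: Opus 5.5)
The plan is to transfer density from $\INA_\pi$ to $\NA_\pi$, use the known result for norm-attaining tensors, and transfer back. Assume $\overline{\INA_{\pi}(X \pten Y)} = X \pten Y$. By Corollary~\ref{INA-NA-are-dense-equivalence}, this is equivalent to $\overline{\NA_{\pi}(X \pten Y)} = X \pten Y$.

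Next I invoke \cite[Lemma 4.1]{GGR}. This is the statement that density of $\NA_\pi$ passes from $X\pten Y$ to $X\pten Z$ whenever $Z$ is $1$-complemented in $Y$. It gives $\overline{\NA_{\pi}(X \pten Z)} = X \pten Z$. Applying Corollary~\ref{INA-NA-are-dense-equivalence} again, now to the pair $(X,Z)$, yields $\overline{\INA_{\pi}(X \pten Z)} = X \pten Z$.

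If one prefers not to rely on the cited lemma, I would argue directly in the spirit of Lemma~\ref{lemma2:INA}. Let $P\colon Y\to Z$ be a norm-one projection and $Q=\id_X\otimes P$, which is a norm-one projection of $X\pten Y$ onto $X\pten Z$. Given $u\in X\pten Z$ and $\varepsilon>0$, pick $v\in\INA_\pi(X\pten Y)$ with $\|u-v\|_\pi<\varepsilon$. Then $\|u-Qv\|_\pi=\|Q(u-v)\|_\pi<\varepsilon$.

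The main obstacle is that $Qv$ need not be integral norm-attaining. If $v$ is witnessed by $\mu$, then pushing $\mu$ forward under $(x,y)\mapsto(x,Py)$ represents $Qv$ with a measure of mass $\|v\|_\pi$. However, $\|Qv\|_\pi$ might be strictly smaller, so optimality can fail. To fix this, I would choose $v$ more carefully. By Bishop--Phelps, first pick a functional $B\in\mathcal{B}(X\times Z)$ almost norming $u$, and compose with $P$ so that $B\circ(\id\times P)$ norms $X\pten Y$ elements suitably. Then approximate using tensors on which $B\circ(\id\times P)$ equals $1$ almost everywhere, and use Proposition~\ref{theo:1-INA} to certify optimality of the pushforward. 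Because this step is exactly the content of \cite[Lemma 4.1]{GGR}, I would rely on the citation and present the short three-line chain above as the proof.
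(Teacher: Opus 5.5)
Your argument is exactly the paper's: apply Corollary~\ref{INA-NA-are-dense-equivalence} to the pair $(X,Y)$, use \cite[Lemma 4.1]{GGR} to carry density of $\NA_\pi$ over to $X\pten Z$, and apply Corollary~\ref{INA-NA-are-dense-equivalence} again to the pair $(X,Z)$. The direct fallback you sketch is not needed, and as written it only outlines an idea rather than proving anything, so relying on the citation is the right choice.
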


\subsection{Norm-attaining extreme points}

We end this section by recording one possible source of interest in norm-attaining tensors. There is currently no known characterization of the extreme points of the unit ball of a projective tensor product $X\pten Y$: at the time of writing the present manuscript, it is an open problem whether they all must be elementary tensors. This problem has only been solved (in the positive) under additional conditions on $X,Y$ or for stronger notions of extreme points; see \cite{GGMR} for a recent account of the state of the art.

One possible application of norm-attaining tensors to this problem lies in the following fact: every norm-attaining extreme point must be an elementary tensor. The following argument shows this for elements of $\INA_\pi$, although it requires the witnessing measure to be Radon.

\begin{proposition}\label{pr:extreme point}
Let $X,Y$ be Banach spaces, and $u$ be an extreme point of $B_{X\pten Y}$. Suppose that $u\in\INA_\pi(X\pten Y)$ is witnessed by a Radon measure. Then $u$ must be an elementary tensor.
\end{proposition}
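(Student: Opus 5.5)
We argue by splitting the witnessing measure. Let $u$ be an extreme point of $B_{X\pten Y}$, so $\|u\|_\pi=1$, and let $\mu$ be a positive Radon measure on $B_X\times B_Y$ with $u=\int x\otimes y\,d\mu$ and $\|\mu\|=1$. By Remark~\ref{remark:INA-properties}(b) we may regard $\mu$ as concentrated on $S_X\times S_Y$. The first step is the observation that for any Borel set $E$ with $0<\mu(E)<1$, the measures $\mu_E:=\mu|_E/\mu(E)$ and $\mu_{E^c}:=\mu|_{E^c}/\mu(E^c)$ satisfy $\mu_E,\mu_{E^c}\ll\mu$. By Corollary~\ref{rem:ina_ac} the tensors
\[
u_E=\int x\otimes y\,d\mu_E,\qquad u_{E^c}=\int x\otimes y\,d\mu_{E^c}
\]
are well defined, lie in $\INA_\pi(X\pten Y)$, and have norm $\|\mu_E\|=\|\mu_{E^c}\|=1$. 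Moreover $u=\mu(E)u_E+\mu(E^c)u_{E^c}$ is a convex combination of points of $B_{X\pten Y}$, so extremality forces $u_E=u_{E^c}=u$. Consequently
\[
\int_E x\otimes y\,d\mu=\mu(E)\,u\qquad\text{for every Borel set }E,
\]
and this identity also holds trivially when $\mu(E)\in\{0,1\}$.

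Next we use the Radon hypothesis. Since $\mu$ is Radon, it has nonempty support $K=\operatorname{supp}\mu$ (the complement of the largest open $\mu$-null set), and every open set meeting $K$ has positive measure. Fix $(x_0,y_0)\in K$ and take the open neighbourhoods $U_\delta=\{(x,y):\|x-x_0\|<\delta,\ \|y-y_0\|<\delta\}$, so that $\mu(U_\delta)>0$. On $U_\delta$ we have $\|x\otimes y-x_0\otimes y_0\|_\pi\le\|x-x_0\|+\|y-y_0\|<2\delta$. Hence
\[
\|u-x_0\otimes y_0\|_\pi=\Big\|\frac1{\mu(U_\delta)}\int_{U_\delta}(x\otimes y-x_0\otimes y_0)\,d\mu\Big\|_\pi\le2\delta .
\]
Letting $\delta\to0$ gives $u=x_0\otimes y_0$, an elementary tensor.

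The main obstacle is the support argument. Without regularity one cannot guarantee that small balls around some point carry positive mass, which is exactly where non-Radon measures could fail. Making this rigorous needs three facts: that the support of a Radon measure on the (possibly nonseparable) metric space $B_X\times B_Y$ is nonempty, that it carries full measure (inner regularity by compacts together with $\tau$-additivity), and that $\mu(U)>0$ for every open $U$ meeting it. Every other step is a routine application of Corollary~\ref{rem:ina_ac} and the standard Bochner integral estimate.
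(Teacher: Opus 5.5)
Your proposal is correct and is essentially the paper's argument. You split $\mu$ into its restriction to a small neighbourhood $U$ of a point in the (nonempty) support and its restriction to the complement, use Corollary~\ref{rem:ina_ac} and extremality to get $u=\frac{1}{\mu(U)}\int_U x\otimes y\,d\mu$, and conclude with the $2\delta$ estimate. One minor remark: of the three support facts you list, only nonemptiness of the support is actually used, together with the fact that every neighbourhood of a support point has positive measure, which holds by definition. The full-measure property is never needed.
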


We recall that the hypothesis of Radonness is automatically satisfied if $X,Y$ have density less than the first measurable cardinal, in particular if they are separable (see Remark \ref{remark:INA-properties}(c) above).

\begin{proof}
Let $\mu$ be a positive Radon measure on $B_X\times B_Y$ representing $u$ as in \eqref{def:INA}. Since $\mu\neq 0$ is Radon, its support is non-empty \cite[Proposition 7.2.9]{Bogachev}; that is, there exists $(x_0,y_0)\in B_X\times B_Y$ such that $\mu(U)>0$ for every open neighborhood of $(x_0,y_0)$. Given $r>0$, let
$$
U_r=(B_X\cap (x_0+rB_X))\times (B_Y\cap (y_0+rB_Y)).
$$
Then $U_r$ contains a neighborhood of $(x_0,y_0)$ and therefore $\mu(U_r)>0$. Now define
$$
v_r = \int_{U_r} x\otimes y\,d\mu(x,y) \qquad\text{and}\qquad w_r = \int_{(B_X\times B_Y)\setminus U_r} x\otimes y\,d\mu(x,y)
$$
and note that $u=v_r+w_r$. Let $\mu_r$ denote the restriction of $\mu$ to $U_r$ (i.e., $\mu_r(E)=\mu(E\cap U_r)$). Then clearly 
\begin{equation*} 
v_r=\int_{B_X\times B_Y} x\otimes y\,d\mu_r(x,y) \ \ \ \mbox{and} \ \ \ \mu_r\ll\mu.
\end{equation*}
By Corollary \ref{rem:ina_ac} we have $v_r\in\INA_\pi(X\pten Y)$ with $\norm{v_r}_\pi=\norm{\mu_r}=\mu(U_r)$. A similar argument shows that $\norm{w_r}_\pi=1-\mu(U_r)$. We claim that $u=v_r/\norm{v_r}_\pi$. Indeed, if $\mu(U_r)=1$, then this is clear. Otherwise, we have
$$
u = v_r+w_r = \mu(U_r)\frac{v_r}{\norm{v_r}_\pi} + (1-\mu(U_r))\frac{w_r}{\norm{w_r}_\pi}
$$
which is a convex combination of two elements of $B_{X\pten Y}$. Since $u$ is an extreme point, our claim follows. Therefore,
\begin{align*}
\norm{x_0\otimes y_0-u}_\pi &= \norm{x_0\otimes y_0-\frac{1}{\mu(U_r)}\int_{U_r}x\otimes y\,d\mu(x,y)}_\pi \\
&= \frac{1}{\mu(U_r)}\norm{\int_{U_r}(x_0\otimes y_0-x\otimes y)\,d\mu(x,y)}_\pi \\
&\leq \frac{1}{\mu(U_r)}\int_{U_r}\norm{x_0\otimes y_0-x\otimes y}_\pi\,d\mu(x,y) \leq 2r .
\end{align*}
As this holds for every $r>0$, we conclude that $u=x_0\otimes y_0$ as we wanted.
\end{proof}

Proposition \ref{pr:extreme point} holds, in particular, for standard norm-attaining tensors. This can also be proved directly with a simplified argument.

\begin{corollary}
Let $X,Y$ be Banach spaces, and $u$ be an extreme point of $B_{X\pten Y}$. If $u\in\NA_\pi(X\pten Y)$, then $u$ is an elementary tensor.
\end{corollary}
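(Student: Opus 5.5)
Two routes are available. The shortest deduces the statement from Proposition \ref{pr:extreme point}. If $u\in\NA_\pi(X\pten Y)$ is written as in \eqref{eqNA}, then the discrete measure $\mu=\sum_n\lambda_n\delta_{(x_n,y_n)}$ witnesses $u\in\INA_\pi(X\pten Y)$, as observed right after Definition \ref{def:INA}. Such a measure is Radon: it is finite, and every Borel set $E$ satisfies $\mu(E)=\sum_{(x_n,y_n)\in E}\lambda_n$, which is approximated from inside by finite sets of atoms (compact) and from outside by open sets. Proposition \ref{pr:extreme point} then gives the conclusion directly.

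For the simplified direct argument the paper alludes to, I would proceed as follows. Write $u=\sum_n\lambda_n x_n\otimes y_n$ with $\lambda_n>0$, $(x_n,y_n)\in B_X\times B_Y$ and $\sum_n\lambda_n=\|u\|_\pi=1$; terms with $\lambda_n=0$ are discarded. Fix an index $m$. Exactly as in Corollary \ref{rem:ina_ac}, applying Proposition \ref{theo:1-INA} with a norming $B$ (which equals $1$ at every $(x_n,y_n)$) gives $\|\lambda_m x_m\otimes y_m\|_\pi=\lambda_m$ and $\|\sum_{n\neq m}\lambda_n x_n\otimes y_n\|_\pi=1-\lambda_m$. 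If $\lambda_m<1$, we then have the convex decomposition
\[
u=\lambda_m (x_m\otimes y_m)+(1-\lambda_m)\frac{\sum_{n\ne m}\lambda_n x_n\otimes y_n}{1-\lambda_m},
\]
with both points in $B_{X\pten Y}$. Extremality forces $u=x_m\otimes y_m$. If $\lambda_m=1$, then $u=x_m\otimes y_m$ trivially.

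There is no real obstacle. The only point needing care is the norm equality for the sub-sums, and it is exactly what the norming bilinear form provides.
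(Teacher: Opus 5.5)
Your proposal is correct and matches the paper, which obtains this corollary as the special case of Proposition \ref{pr:extreme point} for the (Radon) discrete measure $\sum_n\lambda_n\delta_{(x_n,y_n)}$ and only alludes to a direct argument like your second one. In that direct argument you do not actually need the norming form: $x_m\otimes y_m$ and $(1-\lambda_m)^{-1}\sum_{n\neq m}\lambda_n x_n\otimes y_n$ already lie in $B_{X\pten Y}$ by the triangle inequality, and membership in the unit ball is all that extremality requires.
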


\section{A more general version of INA tensors} \label{section:general-ina}

Throughout Section \ref{section:ina}, we have tacitly assumed that the measure $\mu$ is defined on the Borel sets corresponding to the norm topology of $B_X$ and $B_Y$. More generally, one could attempt to define a broader concept of integral norm-attaining tensors as follows. Let $\tau$ be a Hausdorff topology on $B_X\times B_Y$, and set $\Omega := (B_X\times B_Y,\tau)$. We say that an element $u\in X\pten Y$ is \emph{$\tau$-integral projective norm-attaining} if there exists a finite positive $\tau$-Borel measure $\mu$ on $\Omega$ such that the mapping $\varphi: \Omega \to X\pten Y$ defined by $
\varphi(x,y)=x\otimes y$ is $\mu$-Bochner integrable and satisfies
\begin{equation}\label{eq:general ina}
u=\int_\Omega \varphi(x,y)\, d\mu(x,y)
\qquad\text{and}\qquad
\|\mu\|=\|u\|_\pi.
\end{equation}
In this case, we write $u\in\INA_\tau(X\pten Y)$.

The most natural choices for $\tau$ are product topologies $\tau=\tau_X\times\tau_Y$ where $\tau_X$, $\tau_Y$ are locally convex topologies on $B_X$ and $B_Y$, respectively. Besides the norm topology considered in Section \ref{section:ina}, we may use the weak topology, or the weak$^*$ topology in the case of dual spaces. We will write $\INA_w (X\pten Y)$ and $\INA_{w^*} (X^* \pten Y^*)$ when $\Omega$ is respectively given by
\[
(B_X, w)\times (B_Y, w) \qquad\text{and}\qquad (B_{X^*},w^*)\times (B_{Y^*},w^*).
\]
Note that, for topologies $\tau\subseteq\tau'$, every $\tau'$-Borel measure is $\tau$-Borel (more rigorously, its restriction to the Borel $\sigma$-algebra of $\tau$ is), and thus $\INA_{\tau'}\subseteq\INA_\tau$. In particular, we always have $\INA_\pi\subseteq\INA_w\subseteq\INA_{w^*}$.

We already have seen that, for the norm topology, the Bochner integral in \eqref{eq:general ina} is guaranteed to be well-defined under very mild assumptions (see Remark \ref{remark:INA-properties}(c)). As we will see below this is no longer the case when a weaker topology is considered. Nevertheless, most of the results in Section \ref{section:ina} remain valid in this more general framework, with the same argument or possibly only small adjustments. Namely:
\begin{itemize}
\item $\NA_\pi\subseteq\INA_\tau$ for any topology $\tau$, as Dirac deltas are still Borel.
\item $\mu$ is still concentrated on $S_X\times S_Y$ for the weak and weak$^*$ topologies, as our argument only requires that balls are Borel.
\item Considering only positive $\mu$ is no loss of generality if $\tau$ is a product topology.
\item Proposition \ref{theo:1-INA}, Corollary \ref{rem:ina_ac}, Corollary \ref{corollary1:INA} and Proposition \ref{prop1:INA} are valid, with the same proof, for any topology $\tau$.
\item The argument for Lemma \ref{lemma2:INA} and Corollary \ref{cor1:INA} remains valid under the assumption that the projection is continuous with respect to $\tau$. Hence, the $\INA_w$ version is valid in general, but for the $\INA_{w^*}$ version we need to assume that there is a \textit{weak$^*$ continuous} projection $Y\to Z$ with norm $1$.
\item Theorem \ref{density1} and its corollary remain valid for any topology $\tau$. That is, $\FNA_\pi$ is always a norm-dense subset of $\INA_\tau$.
\end{itemize}

\noindent Our proof of Proposition \ref{pr:extreme point}, however, is no longer valid for weaker topologies, as it crucially relies on the existence of bounded open sets.

Ensuring Bochner integrability becomes tricky when $\tau$ is weaker than the norm topology, even when considering Radon measures, because $\varphi$ may fail to be continuous. Perhaps this issue deserves a systematic study, but it will not be pursued here. Instead, we will only record some simple (but rather strong) sufficient conditions for the weak topology. For the next result, recall that a Banach space $X$ has the \emph{Dunford-Pettis property} if $x_n^*(x_n)\to 0$ for every pair of weakly null sequences $(x_n)$, $(x_n^*)$ in $X$ and $X^*$, respectively.

\begin{proposition} \label{tau-result-1}
Let $X$ and $Y$ be separable Banach spaces. Suppose that either
\begin{itemize}
\item[(a)] $X$ or $Y$ has the BAP, or
\item[(b)] $X^*$ and $Y^*$ are separable, and $X$ or $Y$ has the Dunford-Pettis property.
\end{itemize}
Then $\varphi$ is $\mu$-Bochner integrable for any finite positive Borel measure $\mu$ on $(B_X,w)\times (B_Y,w)$.
\end{proposition}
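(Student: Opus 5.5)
The plan is to check the two ingredients of $\mu$-Bochner integrability separately. Integrability of the norm is trivial: $\norm{\varphi(x,y)}_\pi\leq 1$ and $\mu$ is finite, as in Remark~\ref{remark:INA-properties}(c). Essential separability is also immediate, since $X$ and $Y$ are separable and hence so is $X\pten Y$. So everything reduces to \emph{weak measurability}. Identifying $(X\pten Y)^*=\mathcal{B}(X\times Y)$, we must show that for every $B\in\mathcal{B}(X\times Y)$ the scalar map $(x,y)\mapsto B(x,y)$ is Borel with respect to the product topology $\tau=w\times w$ on $B_X\times B_Y$. The difficulty is that $B$ is in general not jointly weakly continuous, and not even separately weak-to-weak continuous in a way that helps directly, so we must argue at the level of $\sigma$-algebras or approximate $B$.

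The route I would try first uses only separability. Since $B_X$ is norm separable, it has a countable network for the norm topology, and hence also for the coarser weak topology. The same holds for $B_Y$ and for the product $(B_X,w)\times(B_Y,w)$. A space with a countable network is hereditarily Lindelöf. Therefore every $\tau$-open set is a countable union of basic open rectangles $U\times V$. This gives
\[
\mathrm{Bor}(\tau)=\mathrm{Bor}(B_X,w)\otimes\mathrm{Bor}(B_Y,w).
\]
Next, in a separable Banach space the weak and norm Borel $\sigma$-algebras coincide: norm-closed balls are weakly closed, and every norm-open set is a countable union of such balls. Combining the two facts,
\[
\mathrm{Bor}(\tau)=\mathrm{Bor}(B_X,\|\cdot\|)\otimes\mathrm{Bor}(B_Y,\|\cdot\|)=\mathrm{Bor}(B_X\times B_Y,\|\cdot\|).
\]
The last equality uses separability of the norm product once more. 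Since $B$ is norm continuous on $B_X\times B_Y$, it is then $\tau$-Borel, and we are done. The step to verify carefully is the identification of $\mathrm{Bor}(\tau)$ with the product $\sigma$-algebra. I expect this to be the main obstacle; if it holds, hypotheses (a) and (b) are not even needed.

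If that identification turns out to be delicate, the fallback is to use the hypotheses to write $B$ as a pointwise limit of $\tau$-continuous functions, which is clearly $\tau$-Borel.

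Under (a), say $X$ has the BAP. Since $X$ is separable, there are finite-rank operators $T_n$ with $T_n x\to x$ for all $x\in X$ and $\sup_n\norm{T_n}<\infty$. Writing $T_n=\sum_i x_i^*\otimes x_i$, we get
\[
B(T_nx,y)=\sum_i x_i^*(x)\,B(x_i,y),
\]
which is a finite sum of products of weakly continuous functions, hence $\tau$-continuous. Moreover $B(T_nx,y)\to B(x,y)$ for every $(x,y)$, so $B$ is $\tau$-Borel. The case where $Y$ has the BAP is symmetric.

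Under (b), separability of $X^*$ and $Y^*$ makes $(B_X,w)$ and $(B_Y,w)$ metrizable, so sequential continuity suffices. Take $x_n\wconv x$ and $y_n\wconv y$, with say $X$ having the Dunford--Pettis property, and view $B$ as an operator $X\to Y^*$. Split
\[
B(x_n,y_n)-B(x,y)=B(x_n-x,y_n)+B(x,y_n-y).
\]
The second term tends to $0$ because $B(x,\cdot)\in Y^*$. For the first term, write $\widetilde{B}\in\mathcal{L}(Y,X^*)$ for the transposed operator $y\mapsto B(\cdot,y)$. Then $B(x_n-x,y_n)=\langle \widetilde{B}y_n,\,x_n-x\rangle$. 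Here $x_n-x$ is weakly null in $X$, and $\widetilde{B}y_n$ is weakly convergent in $X^*$, say to $\widetilde{B}y$. The Dunford--Pettis property of $X$ then gives $\langle \widetilde{B}y_n-\widetilde{B}y,\,x_n-x\rangle\to 0$, while $\langle \widetilde{B}y,\,x_n-x\rangle\to 0$ trivially. Hence $B$ is jointly weakly sequentially continuous, so $\tau$-continuous, and in particular $\tau$-Borel.
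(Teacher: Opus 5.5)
Your proposal is correct, and your primary argument differs from the paper's and proves more.

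The paper never identifies the $\tau$-Borel sets. In case (a) it writes $B\circ\varphi$ as the pointwise limit of the $\tau$-continuous maps $(x,y)\mapsto B(Q_nx,y)$, where $Q_n$ are the finite-rank operators from the BAP. In case (b) it shows $B\circ\varphi$ is actually $\tau$-continuous, using metrizability of $\tau$ and the Dunford--Pettis property. Your two fallback arguments are exactly these; in (b) you put the Dunford--Pettis property on $X$ and use the transpose $Y\to X^*$, which is just the symmetric case.

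Your main argument instead shows that, for separable $X$ and $Y$, every norm-Borel subset of $B_X\times B_Y$ is $\tau$-Borel. Hence the norm-continuous map $(x,y)\mapsto B(x,y)$ is automatically $\tau$-Borel for every $B\in\mathcal{B}(X\times Y)$. This is valid, and it shows that hypotheses (a) and (b) are superfluous: separability of $X$ and $Y$ alone yields the conclusion.

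The half of the identification you flagged as the main obstacle is not needed. You only use $\mathrm{Bor}(B_X\times B_Y,\|\cdot\|)\subseteq\mathrm{Bor}(\tau)$. That follows from the trivial inclusion $\mathrm{Bor}(B_X,w)\otimes\mathrm{Bor}(B_Y,w)\subseteq\mathrm{Bor}(\tau)$ (projections are continuous), together with the equality of weak and norm Borel sets in each factor and second countability of the norm product. Even more directly: the norm topology of $B_X\times B_Y$ has a countable base of products of relatively open balls. Each such ball is a countable union of norm-closed, hence weakly closed, balls. So every norm-open set is a countable union of $\tau$-closed sets. The countable-network and hereditary Lindelöf argument only gives the reverse inclusion $\mathrm{Bor}(\tau)\subseteq\mathrm{Bor}(B_X,w)\otimes\mathrm{Bor}(B_Y,w)$, which is true but plays no role.

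The paper's route does give extra regularity: $B\circ\varphi$ is of Baire class one in case (a) and $\tau$-continuous in case (b). Bochner integrability does not need this, so for the statement as given your argument is both shorter and strictly more general. The same observation, using that closed balls of a dual space are weak$^*$ closed, also gives the weak$^*$ measurability in Proposition~\ref{prop:separable-reflexive-findim} directly from the separability of $X^*$ and $Y^*$, without the approximation steps.
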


\begin{proof}
Since $\norm{\varphi}$ is bounded and $X,Y$ are separable, we only need to prove that $\varphi$ is weakly measurable. Let $T\in (X\pten Y)^*=\mathcal{L}(X,Y^*)$, we need to check that $T\circ\varphi:(x,y)\to\duality{Tx,y}$ is Borel measurable on $\Omega=(B_X,w)\times (B_Y,w)$.

We start with case (a). First, we note that $T\circ\varphi$ is continuous when $T$ is of finite rank. Indeed, in that case we may write $T=\sum_{k=1}^n x_k^*\otimes y_k^*$ for some $x_k^*\in X^*$, $y_k^*\in Y^*$. Let $(x_i,y_i)$ be a net in $\Omega$ converging to $(x,y)$. Then $x_i\wconv x$, $y_i\wconv y$ and so
$$
\duality{Tx_i,y_i} = \sum_{k=1}^n x_k^*(x_i)y_k^*(y_i) \to \sum_{k=1}^n x_k^*(x)y_k^*(y) = \duality{Tx,y}
$$
by finiteness.

Now let $T$ be arbitrary. Suppose that $X$ has the BAP. Since it is separable, there exists a sequence $(Q_n)$ of finite-rank operators in $\mathcal{L}(X)$ such that $Q_nx\to x$ for all $x\in X$ \cite[Corollary 3.4]{Casazza}. Then $(T\circ Q_n)$ is a sequence of finite rank operators in $\mathcal{L}(X,Y^*)$ such that $TQ_nx\to Tx$ for all $x\in X$, and $T\circ Q_n\circ\varphi$ converges pointwise to $T\circ\varphi$. Since each $T\circ Q_n\circ\varphi$ is continuous, $T\circ\varphi$ is the pointwise limit of a sequence of continuous functions, hence Borel measurable.

In case (b), suppose that $Y$ has the Dunford-Pettis property. We will show that $T\circ\varphi$ is actually continuous. Indeed, $\Omega$ is metrizable because $X^*$, $Y^*$ are separable, so it suffices to check sequential continuity. Let $x_n\wconv x$, $y_n\wconv y$ be weakly converging sequences in $B_X$ and $B_Y$, respectively. Then $Tx_n\wconv Tx$ in $Y^*$, and we have $\duality{Tx_n,y_n} \to \duality{Tx,y}$ by the Dunford-Pettis property.
\end{proof}

Recall that $(X\iten Y)^*= \mathcal{B}_I(X\times Y)$, the space of \emph{integral bilinear forms} on $X\times Y$. These forms and their dual action can be described as follows: given a bilinear mapping $B\in\mathcal{B}(X\times Y)$, its linearization $\tilde{B}$ belongs to $(X\iten Y)^*$ if and only if there is a Radon measure $\mu$ on the compact $\Omega=(B_{X^*}, w^*) \times (B_{Y^*}, w^*)$ such that for every tensor $u\in X\iten Y$, the dual action is given by 
\begin{equation*}
\langle \tilde{B},u \rangle = \int_\Omega u(x^*,y^*) \ d\mu(x^*,y^*),
\end{equation*} 
where $u(x^*, y^*)$ is just the natural action of $u$ as a bilinear form $u\in \mathcal{B}(X^*\times Y^*)$, and $\|\mu\|=\|\tilde{B}\|$ (see \cite[Sections 3.1 and 3.4]{Ryan}). On the other hand, by \cite[Theorem 5.33]{Ryan}, whenever $X^*$ or $Y^*$ has RNP and $X^*$ or $Y^*$ has AP, then $(X\iten Y)^*=X^*\pten Y^*$. We will use these facts in Proposition \ref{prop:separable-reflexive-findim} below without any explicit references.

 \begin{proposition} \label{prop:separable-reflexive-findim}
     Let $X$ and $Y$ be Banach spaces. Suppose that $X^*$ and $Y^*$ are separable. 
     Suppose that $X^*$ has the AP and $Y$ is reflexive. Then, 
 \[
 X^* \pten Y^* = {\INA_{w^*} (X^* \pten Y^*)}.
 \]
 \end{proposition}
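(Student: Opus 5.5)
Fix $u\in X^*\pten Y^*$ with $\|u\|_\pi=1$. Since $X^*$ is separable it has the RNP, and $X^*$ has the AP, so $(X\iten Y)^*=X^*\pten Y^*$ isometrically. We will use this identification twice: once to produce a representing measure for $u$, and once to check that the Bochner integral of $\varphi$ against this measure is $u$.

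\emph{Step 1: a representing measure.} Viewing $u$ as an integral bilinear form on $X\times Y$, we obtain a Radon measure $\mu$ on the compact space $\Omega=(B_{X^*},w^*)\times(B_{Y^*},w^*)$ with $\|\mu\|=\|u\|_\pi=1$ and
\[
\langle u, v\rangle=\int_\Omega v(x^*,y^*)\,d\mu(x^*,y^*) \qquad \text{for all } v\in X\iten Y .
\]
Here $v(x^*,y^*)$ is the natural action of $v$ on $X^*\times Y^*$. As in Remark \ref{remark:INA-properties}(a), we may assume $\mu$ is positive: pushing forward $|\mu|$ under $(x^*,y^*)\mapsto (h\,x^*,y^*)$ is legitimate because scalar multiplication is $w^*$-continuous.

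\emph{Step 2: Bochner integrability of $\varphi(x^*,y^*)=x^*\otimes y^*$.} The function $\varphi$ is bounded. It is separably valued because $X^*\pten Y^*$ is separable, since $X^*$ and $Y^*$ are. It remains to prove weak measurability: for each $T\in(X^*\pten Y^*)^*=\mathcal{L}(Y^*,X^{**})$, the map $(x^*,y^*)\mapsto\langle Ty^*,x^*\rangle$ should be Borel on $\Omega$.
\begin{itemize}
\item Since $Y$ is reflexive, $(B_{Y^*},w^*)=(B_{Y^*},w)$, so $y^*\mapsto Ty^*$ is continuous from $(B_{Y^*},w^*)$ into $(X^{**},w)$.
\item For $T$ of finite rank the map is jointly $w^*$-continuous.
\item For general $T$, I will approximate by finite-rank operators. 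Separability of $X^*$ with the AP gives the BAP (Grothendieck), hence finite-rank $Q_n$ on $X^*$ with $Q_n\to \id$ pointwise. The functions $\langle Ty^*,Q_nx^*\rangle$ converge pointwise to $\langle Ty^*,x^*\rangle$.
\item To see that each such function is Borel, write $Q_n=\sum_k x^{**}_k\otimes x_k^*$. This gives $\sum_k x^{**}_k(x^*)\langle Ty^*,x_k^*\rangle$, where the second factors are $w^*$-continuous in $y^*$.
\end{itemize}
The obstacle is the factor $x^{**}_k(x^*)$, which need not be $w^*$-continuous. Because $(B_{X^*},w^*)$ is compact metrizable ($X$ is separable, as $X^*$ is), every $x^{**}\in X^{**}$ restricted to $B_{X^*}$ is of the first Baire class, being a $w^*$-pointwise limit of a sequence in $X$ by Odell--Rosenthal-type arguments. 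I expect this to be the main step to justify. An alternative is to approximate $T$ instead, using that $\mathcal{L}(Y^*,X^{**})$ elements restricted to compact metrizable $\Omega$ are Baire via separability. In either case $T\circ\varphi$ is a pointwise limit of Borel functions, hence Borel.

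\emph{Step 3: identification and norm.} Let $w=\int_\Omega\varphi\,d\mu\in X^*\pten Y^*$. For every elementary $v=x\otimes y\in X\iten Y$, the functional $x\otimes y$ acts continuously on $X^*\pten Y^*$. Hence
\[
\langle w,x\otimes y\rangle=\int x^*(x)y^*(y)\,d\mu=\langle u,x\otimes y\rangle .
\]
Elementary tensors span a dense subspace of $X\iten Y$, so $w=u$. Finally, $\|\mu\|=\|u\|_\pi$, so $u\in\INA_{w^*}(X^*\pten Y^*)$.
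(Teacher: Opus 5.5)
Your argument is correct and essentially the paper's. The BAP of $X^*$ reduces weak measurability of $\varphi$ to finite-rank pieces, and the only non-continuous factor, some $x^{**}_k\in X^{**}$ evaluated on $(B_{X^*},w^*)$, is handled as a pointwise limit of a sequence from $X$, exactly as in the paper's Step~1. Your Step~3 and the reduction to a positive measure usefully fill in details the paper leaves implicit.

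Two small fixes. First, delete your second bullet: it is false as stated, since a finite-rank $T\in\mathcal{L}(Y^*,X^{**})$ need not give a jointly $w^*$-continuous function, which is precisely the obstacle you identify later. Second, the sequential approximation of $x^{**}$ comes from metrizability of $(B_{X^{**}},w^*)$ (i.e.\ separability of $X^*$) together with Goldstine, not from metrizability of $(B_{X^*},w^*)$. The latter also holds for $X=\ell_1$, where the conclusion fails (Example~\ref{example:fails_to_be_Borel}).
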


\begin{proof}
    Let $z \in X^* \pten Y^*$ be given. As a separable dual space, $X^*$ has the RNP, and by hypothesis, it also has the AP. It follows that $X^* \pten Y^* = (X \iten Y)^*$ (see \cite[Theorem 5.33]{Ryan}). Then there exists a Radon measure $\mu$ on the compact $\Omega = (B_{X^*}, w^*) \times (B_{Y^*}, w^*)$ such that for every $u \in X \iten Y$ we have 
    \begin{equation}\label{eq1_new}
    \langle z, u \rangle = \int_\Omega u(x^*,y^*) \, d\mu(x^*,y^*). 
    \end{equation}
    We claim that the map $\varphi : \Omega \to X^* \pten Y^*$, sending $(x^*,y^*)$ to $x^* \otimes y^*$, is $\mu$-Bochner integrable. To this end, it suffices to prove that $\varphi$ is weakly measurable. 

    Step 1: Let $R \in \mathcal L (X^*, Y) =  (X^* \pten Y^*)^*$ and suppose that $R$ is of finite rank. We will show that $R\circ\varphi$ is Borel measurable. For simplicity, we may assume that $R$ is of rank one, i.e., $R = x_0^{**} \otimes y_0$ for some $x_0^{**} \in X^{**}$ and $y_0 \in Y$.
    Note from the separability of $X^*$ that $(B_{X^{**}}, w^*)$ is a metrizable compact space. This allows us to extract a bounded sequence $(x_n) \subseteq X$ such that $x_n \wstarconv x_0^{**}$, by Goldstine's theorem. For each $n \in \mathbb{N}$, consider $R_n := x_n \otimes y_0 \in \mathcal L (X^*, Y)$. Notice that $R_n \circ \varphi : \Omega \to \mathbb{K}$ is continuous. If $(x_\alpha^*, y_\alpha^*)$ is a net in $\Omega$ such that $x_\alpha^* \wstarconv x_0^*$ and $y_\alpha^* \wstarconv y_0^*$ in $B_{X^*}$ and $B_{Y^*}$, respectively, then 
    \[
    (R_n \circ \varphi) (x_\alpha^*, y_\alpha^*) = y_\alpha^* ( R_n (x_\alpha^*)) = x_\alpha^* (x_n) y_\alpha^* (y_0) \to x_0^* (x_n) y_0^* (y_0) = (R_n \circ \varphi) (x_0^*,y_0^*)
    \]
    as $\alpha \to \infty$. This proves that $R_n \circ \varphi$ is continuous. Observe that $R_n \circ \varphi$ converges to $R \circ \varphi$ pointwise. In fact, for $(x^*, y^*) \in \Omega$, we have 
    \[
    (R_n \circ \varphi) (x^*, y^*) = x^* (x_n) y^* (y_0) \to x_0^{**} (x^*) y^* (y_0) = (R\circ \varphi) (x^*, y^*).
    \]
    In particular, this proves that $R\circ \varphi$ is Borel measurable as the pointwise limit of a sequence of continuous functions.

    Step 2: Next, since $X^*$ is separable and has the AP, it has the BAP \cite[Theorem 3.6]{Casazza}. Thus, for a given $T \in \mathcal{L} (X^*,Y)$, there is a sequence $(T_n) \subseteq \mathcal{L} (X^*, Y)$ of finite-rank operators approximating $T$ strongly. Note that $T_n \circ \varphi$ converges to $T \circ \varphi$ pointwise, and by Step 1 we have that $T_n \circ \varphi$ is Borel measurable for each $n \in \mathbb{N}$. Consequently, we have that $T \circ \varphi$ is Borel measurable as the pointwise limit of a sequence $(T_n \circ \varphi)$ of Borel measurable functions. This proves that $\varphi$ is weakly measurable and completes the proof.
\end{proof}

\begin{remark}
    In Proposition \ref{prop:separable-reflexive-findim}, the separability of $Y$ is assumed only to ensure that $\varphi$ is essentially separably valued. We do not know whether this assumption on $Y$ is essential for the result. 
\end{remark}

Combining Proposition \ref{prop:separable-reflexive-findim} and Corollary \ref{INA-NA-are-dense-equivalence} (which, as we recall, remains valid for $\INA_w$), we immediately obtain the following result. Corollary \ref{prop:seperable-reflexive-finite-dimensional} is covered already by \cite[Corollary 4.6]{DGJR}, although the proof in \cite{DGJR} uses a different approach.

\begin{corollary}\label{prop:seperable-reflexive-finite-dimensional}
Let $X$ and $Y$ be separable reflexive Banach spaces, and suppose that one of them has AP. Then $X \pten Y = \overline{\NA_\pi (X \pten Y)}$. Equivalently, $X \pten Y = \overline{\INA_\pi (X \pten Y)}$.
\end{corollary}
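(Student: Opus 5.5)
The plan is to obtain the corollary by applying Proposition \ref{prop:separable-reflexive-findim} to the preduals of $X$ and $Y$ and then passing to density through Theorem \ref{density1}.

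\textbf{Step 1: set up the preduals.} Since $X$ and $Y$ are reflexive, put $E:=X^*$ and $F:=Y^*$. Then $E^*=X$ and $F^*=Y$, so $E^*\pten F^*=X\pten Y$. Both are separable because $X$ and $Y$ are separable. The roles of $X$ and $Y$ are symmetric, via the isometry $X\pten Y\cong Y\pten X$, $x\otimes y\mapsto y\otimes x$, which preserves $\NA_\pi$ and $\FNA_\pi$. So we may assume $X$ has the AP. The hypotheses of Proposition \ref{prop:separable-reflexive-findim} for the pair $(E,F)$ then hold:
\begin{itemize}
\item $E^*=X$ and $F^*=Y$ are separable;
\item $E^*=X$ has the AP;
\item $F=Y^*$ is reflexive.
\end{itemize}
The proposition therefore gives $X\pten Y=\INA_{w^*}(X\pten Y)$, with the $w^*$ topologies on $B_X=B_{E^*}$ and $B_Y=B_{F^*}$.

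\textbf{Step 2: weak$^*$ equals weak.} By reflexivity, the weak$^*$ topology $\sigma(X,X^*)$ on $B_X$ is exactly the weak topology, and the same holds on $B_Y$. Hence $\INA_{w^*}=\INA_w$ here, and every tensor lies in $\INA_w(X\pten Y)$.

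\textbf{Step 3: density.} As noted in the bullet list after the definition of $\INA_\tau$, Theorem \ref{density1} holds for any topology $\tau$, so $\FNA_\pi$ is norm-dense in $\INA_w$. Its proof uses only three ingredients: $\mu$-essential separability of $\varphi$, Proposition \ref{theo:1-INA}, and Borel measurability of the partition sets $E_n$. The sets $E_n$ are preimages under the weakly measurable, separably valued map $\varphi$, which is Bochner integrable and hence strongly measurable. So they are measurable and the argument goes through. Consequently
\[
X\pten Y=\overline{\FNA_\pi(X\pten Y)}\subseteq\overline{\NA_\pi(X\pten Y)}.
\]
The stated equivalence with $\overline{\INA_\pi}$ follows from $\NA_\pi\subseteq\INA_\pi\subseteq\overline{\NA_\pi}$ (Theorem \ref{density1}), which is Corollary \ref{INA-NA-are-dense-equivalence}.

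\textbf{Main obstacle.} The substantive work is the Bochner integrability already carried out in Proposition \ref{prop:separable-reflexive-findim}. The point that needs care here is that the density theorem really transfers to the weak topology. That is exactly the measurability of the sets $E_n$, which strong measurability of $\varphi$ supplies.
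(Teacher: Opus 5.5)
Your proposal is correct and follows the paper's own route. You apply the proposition on $\INA_{w^*}$ to the duals $X^*,Y^*$, so that $X\pten Y=\INA_{w^*}(X\pten Y)$; you identify $w^*$ with $w$ on the balls by reflexivity; and you conclude with the density theorem, which the paper notes holds for any topology $\tau$. Your explicit check that the partition sets $E_n$ are still Borel, using the strong measurability of $\varphi$, is a detail the paper leaves implicit.
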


Let us observe that the integral appearing in~\eqref{eq1_new} is scalar-valued, and the proof of Proposition \ref{prop:separable-reflexive-findim} shows that the map 
\[
\varphi : \Omega = (B_{X^*}, w^*) \times (B_{Y^*}, w^*) \to X^* \pten Y^*
\]
is $\mu$-Bochner integrable. Therefore, the following vector-valued integration
\begin{equation} \label{eq1}
z = \int_\Omega x^* \otimes y^* \ d \mu(x^*, y^*).
\end{equation}
is well-defined. 
In general, vector-valued integrals require substantially stronger assumptions than scalar-valued ones, and \eqref{eq1} may fail to be well defined even in classical settings, as illustrated by the following examples. 

\begin{example}\label{example:fails_to_be_Borel}
Suppose that $X=\ell_1$ (so, $X^*$ is not separable) and $Y$ is, say, finite-dimensional. By \cite[Theorem~4]{Christensen}, any element of $\ell_1^{**}$ that is weak$^*$ Borel measurable must actually belong to $\ell_1$. In particular, there exists $\phi\in \ell_1^{**}$ that is not weak$^*$ Borel. Consider the mapping $\varphi: (B_{X^*},w^*)\times (B_{Y^*},w^*) \to X^*\pten Y^*$ defined by 
$\varphi(x^*,y^*)=x^*\otimes y^*$. Fixing $y\in Y$, the scalar-valued function
\[
(x^*,y^*) \longmapsto (\phi\otimes y)\bigl(\varphi(x^*,y^*)\bigr) = \phi(x^*)\cdot y^*(y)
\]
fails to be Borel measurable. Consequently, the mapping $\varphi$ is not weakly measurable and therefore cannot be Bochner integrable. 
\end{example}

The example above shows that the assumptions that $X^*$ or $Y^*$ has the RNP and that one of them has the AP do not guarantee the existence of the Bochner representation \eqref{eq1} (while these assumptions ensure the dual action as in \eqref{eq1_new}). This is also shown by the next example, which we learned from Andreas Defant. We are grateful to him for allowing us to include it in this paper.

\begin{example}\label{example:Defant}
    Let $X=\ell_2$. As $X$ is reflexive and has the BAP, 
\[
(X \widehat{\otimes}_\varepsilon Y)^*
= X^* \widehat{\otimes}_\pi Y^* 
\]
for every Banach space $Y$. Let $\Gamma$ be an uncountable index set and let
$K=\{0,1\}^{\Gamma}$ endowed with the product topology. Denote by
$\lambda$ the product  probability measure on $K$, and set
$Y=C(K)$. Then $Y^*=\mathcal{M}(K)$, the space of regular Borel measures on $K$.
The map
\[
K \to \mathcal{M}(K) \ \ \ \mbox{defined by} \ \ \  t \mapsto \delta_t 
\]
is weak$^*$-measurable and bounded, but its range
$\{\delta_t : t\in K\}$ is norm-discrete and non-separable. Consequently,
this map is not $\lambda$-measurable and therefore cannot be Bochner integrable.

Fix $x_0^* \in B_{X^*}$ with $\|x_0^*\|=1$. Consider the map 
\[
\psi : K \to \Omega:= (B_{X^*}, w^*) \times (B_{Y^*}, w^*) 
\]
given by $\psi (t) =  (x_0^*,\delta_t)$ for every $t \in K$. 
Note that $\psi$ is continuous, in particular, it is a Borel map. Let $\mu = \psi_\sharp \lambda$ the pushforward measure on $\Omega$. 
Then $\mu$ represents a functional $z\in (X \widehat{\otimes}_\varepsilon Y)^*$
via
\[
\langle u,z\rangle
= \int_K u(x_0^*,\delta_t)\, d\lambda(t),
\qquad u\in X \widehat{\otimes}_\varepsilon Y,
\]
and under the above identification we have
$z=x_0^* \otimes \lambda \in X^* \widehat{\otimes}_\pi Y^*$.
However, denoting by $\varphi : \Omega \to X^* \pten Y^*$ the map that sends $(x^*,y^*)$ to $x^* \otimes y^*$, the following representation
\[
z = \int_\Omega \varphi (x^*,y^* ) \, d\mu(x^*, y^*)
= \int_K (\varphi \circ \psi ) (t) \, d\lambda(t) = \int_K x_0^* \otimes \delta_t \, d\lambda(t)
\]
cannot exist as a Bochner integral in
$X^* \widehat{\otimes}_\pi Y^*$, since it would require the Bochner
integrability of the map $t\mapsto \delta_t$ in $\mathcal{M}(K)$, which fails.
\end{example}

Examples \ref{example:fails_to_be_Borel} and \ref{example:Defant} show that the vector-valued integral in~\eqref{eq1} may fail to exist even when the corresponding scalar-valued integrals are well defined, and this highlights the necessity of imposing extra assumptions in order to define $\tau$-integral projective norm-attaining tensors.

\section{On the existence of non-norm-attaining tensors} \label{section:non-norm-attaining}

In this section, we will extend the existing results on norm-attaining tensors to integral norm-attaining tensors. Recall that, by Corollary \ref{INA-NA-are-dense-equivalence}, the problem of density of norm-attaining tensors in $X\pten Y$ is equivalent in the classical and integral settings. Moreover, if every element of $X\pten Y$ is norm-attaining then it is also $\INA_\pi$. Thus, we will focus on the problem of determining pairs $X,Y$ such that $\NA_\pi(X\pten Y)\neq X\pten Y$, as this is the case where replacing $\NA_\pi$ with $\INA_\pi$ provides a formally stronger statement (whether it is really stronger depends on the answer to Question \ref{q:na=ina}).
We will show that all known examples can be extended to the $\INA_\pi$ case. Furthermore, we provide additional examples that are new even in the classical $\NA_\pi$ setting.

We start by reviewing the known examples where $\NA_\pi(X\pten Y)\neq X\pten Y$:
\begin{itemize}
\item[(a)] $X=L_1(\mathbb{T})$ and $Y=\ell_2^2$ \cite[Example 3.12(a)]{DJRR}.
\item[(b)] $X=C(\mathbb{T})^*$ and $Y=\ell_2^2$ \cite[Example 3.8]{GGR}.
\item[(c)] $X=c_0$ (or, more generally, an LFC space, see the text before Corollary \ref{theo:ellps}) and $Y$ a Hilbert space \cite[Theorem 3.1]{Rueda}.
\item[(d)] Any pair of spaces for which $\NA(X,Y^*)$ is not norm-dense in $\mathcal{L}(X,Y^*)$, by way of \cite[Corollary 3.11]{DJRR}. For instance, $L_1\pten L_1$, $L_1\pten Y$ where $Y^*$ is strictly convex and fails RNP, or $G\pten\ell_p$ where $G$ is the Gowers space and $1<p<\infty$ (see \cite[Example 3.12]{DJRR}).
\end{itemize}

For all examples pertaining to case (d), we immediately get $\INA_\pi(X\pten Y)\neq X\pten Y$ by Proposition \ref{prop1:INA}. The extension of case (c) will be covered by Corollary \ref{theo:ellps} below. Cases (a) and (b) are also easy to extend as we can see below.

\begin{example} We have that
\begin{enumerate}
\item $\INA_{\pi}(L_1(\mathbb{T}) \pten \ell_2^2) \not= L_1(\mathbb{T}) \pten \ell_2^2$.
\item $\INA_{\pi}(C(\mathbb{T})^* \pten \ell_2^2) \not= C(\mathbb{T})^* \pten \ell_2^2$.
\end{enumerate}
\end{example}

\begin{proof} 
In \cite[Remark 5.7(2)]{Godefroy15}, a bilinear form $T\in\mathcal{B}(L_1(\mathbb{T})\times \ell_2^2)$ was found such that it attains its norm as a functional on $L_1(\mathbb{T})\pten \ell_2^2$, but it does not attain its norm as an operator from $L_1(\mathbb{T})$ to $(\ell_2^2)^*$. By Corollary \ref{corollary1:INA}, we get that $\INA_\pi(L_1(\mathbb{T})\pten \ell_2^2)\neq L_1(\mathbb{T})\pten \ell_2^2$. Also, since $L_1(\mathbb{T})$ is $1$-complemented in $C(\mathbb{T})^*$ (see \cite[Example 3.8]{GGR}), we get that $\INA_\pi(C(\mathbb{T})^*\pten \ell_2^2)\neq C(\mathbb{T})^*\pten \ell_2^2$ by Corollary \ref{cor1:INA}.
\end{proof}

The remainder of this section is devoted to obtaining new examples of projective tensor products in which non-norm-attaining tensors exist. It is divided into two subsections: the first deals with tensor products involving $c_0$-type spaces and provides an improvement of item (c) above, while the second considers $L_1$-spaces and it improves item (a).

\subsection{\texorpdfstring{$c_0$}{c0}-type spaces}

First of all, we present a result that extends \cite[Theorem 3.1]{Rueda} in the {\it real scalar case}, enlarging the class of examples where the norm-attaining elements lie inside the algebraic tensor $X \otimes Y$.

\begin{theorem} \label{thm:c0Y} Let $\K=\R$ and let $Y$ be a Banach space such that both $Y$ and $Y^*$ are strictly convex. Then,
\begin{equation*}
    \INA_{\pi}(c_0 \pten Y)= \NA_{\pi}(c_0 \pten Y)=\FNA_{\pi}(c_0 \pten Y)\subseteq c_0 \otimes Y.
\end{equation*}
In particular, if in addition $Y$ is infinite-dimensional, then $\INA_\pi (c_0 \pten Y) \neq c_0 \pten Y$.
\end{theorem}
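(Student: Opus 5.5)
The plan is to take an arbitrary $u\in\INA_\pi(c_0\pten Y)$ and show directly that it is a finite sum of elementary tensors with an optimal representation, i.e. $u\in\FNA_\pi(c_0\pten Y)$. Since $\FNA_\pi\subseteq\NA_\pi\subseteq\INA_\pi$ always holds, this gives the chain of equalities together with the inclusion in $c_0\otimes Y$. We may assume $u\neq 0$. Let $\mu$ be a positive measure witnessing $u$, concentrated on $S_{c_0}\times S_Y$ by Remark \ref{remark:INA-properties}(b). By Hahn--Banach, pick $T\in S_{\mathcal{L}(c_0,Y^*)}$ with $\langle T,u\rangle=\|u\|_\pi$. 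By Proposition \ref{theo:1-INA}(3) there is a $\mu$-null set $N$ such that $\langle Tx,y\rangle=1$ for all $(x,y)\notin N$. In particular, $T$ attains its norm at every such $x$, and $y$ norms $Tx$.

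The key step, which I expect to be the main obstacle, is a structural lemma: \emph{if $T\in S_{\mathcal{L}(c_0,Y^*)}$ attains its norm at some $x\in S_{c_0}$ and $Y^*$ is strictly convex, then $Te_i=0$ for every $i$ with $|x_i|<1$.} To prove it, take $i$ with $|x_i|<1$ and choose $t>0$ small enough that $\|x\pm te_i\|_\infty=1$. Then $\|Tx\pm tTe_i\|\leq 1$, while $Tx$ is their midpoint and has norm $1$. Strict convexity of $Y^*$ forces $Tx+tTe_i=Tx-tTe_i$, so $Te_i=0$. Let $A=\{i:Te_i\neq 0\}$. Since $T$ attains its norm (at least when $N$ is not of full measure, which holds because $\mu\neq 0$), $A$ is contained in the finite set $\{i:|x_i|=1\}$, so $A$ is finite. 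It is nonempty because $T\neq 0$ and finitely supported vectors are dense. Consequently, for every norming $x$ we have $Tx=\sum_{i\in A}x_iTe_i$ with $x_i\in\{-1,1\}$ for $i\in A$. This is where $\K=\R$ is used: there are only finitely many sign patterns $\varepsilon\in\{-1,1\}^A$, hence only finitely many values $f_\varepsilon:=\sum_{i\in A}\varepsilon_iTe_i$ of $Tx$.

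Next we use strict convexity of $Y$. A norm-one functional $f_\varepsilon$ is normed by at most one $y\in S_Y$, since the midpoint of two norming points would also norm it and would have norm $1$. Denote this point by $y_\varepsilon$ when it exists. Set $E_\varepsilon=\{(x,y)\notin N: (x_i)_{i\in A}=\varepsilon\}$. These are finitely many Borel sets partitioning the complement of $N$, and $y=y_\varepsilon$ on $E_\varepsilon$. Hence
\[
u=\sum_{\varepsilon}\int_{E_\varepsilon}x\otimes y_\varepsilon\,d\mu=\sum_\varepsilon x_\varepsilon\otimes y_\varepsilon,\qquad x_\varepsilon:=\int_{E_\varepsilon}x\,d\mu(x,y)\in c_0 .
\]
Here $x_\varepsilon$ is a Bochner integral in $c_0$, obtained by applying the bounded map $x\otimes y\mapsto x\,\langle y^*_\varepsilon,y\rangle$ type arguments, or more simply by noting that $x\mapsto x\otimes y_\varepsilon$ is an isometric embedding and $\varphi=x\otimes y_\varepsilon$ on $E_\varepsilon$. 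Then
\[
\|u\|_\pi\leq\sum_\varepsilon\|x_\varepsilon\|\,\|y_\varepsilon\|\leq\sum_\varepsilon\mu(E_\varepsilon)=\|\mu\|=\|u\|_\pi ,
\]
so this finite representation is optimal and $u\in\FNA_\pi(c_0\pten Y)\subseteq c_0\otimes Y$.

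Finally, for the last statement, assume $Y$ is infinite-dimensional. Choose linearly independent unit vectors $y_n\in Y$ and set $u=\sum_n 2^{-n}e_n\otimes y_n\in c_0\pten Y$. Viewed as an operator $c_0^*=\ell_1\to Y$, it maps $e_n^*$ to $2^{-n}y_n$, so it has infinite rank. Therefore $u\notin c_0\otimes Y$, and by the above $u\notin\INA_\pi(c_0\pten Y)$.
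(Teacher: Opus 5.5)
Your proof is correct and follows the paper's argument essentially step by step. You take a norming $T$ from Proposition \ref{theo:1-INA}, use the perturbation argument with strict convexity of $Y^*$ to force $|x_i|=1$ on the finite set $\{i:Te_i\neq 0\}$, use $\K=\R$ to get finitely many sign patterns, use strict convexity of $Y$ to pin down $y$ on each piece, and conclude with the norm count. The differences are cosmetic: you get the finiteness of $\{i:Te_i\neq 0\}$ from that same perturbation argument rather than citing Lindenstrauss, and you pull $y_\varepsilon$ out of the integral via the bounded embedding $x\mapsto x\otimes y_\varepsilon$ rather than Lemma \ref{lemma:bochner-commute-tensor} (keep that justification and drop the first one, which refers to an undefined $y^*_\varepsilon$).
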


For the proof we need the following lemma, that will also be useful later. As we have not found any reference for it, we prove it for the sake of completeness.

\begin{lemma}\label{lemma:bochner-commute-tensor}
Let $X,Y$ be Banach spaces and $(\Omega,\Sigma,\mu)$ be a finite measure space. Suppose that $f:\Omega\to X$ is $\mu$-Bochner integrable, and let $y_0\in Y$. Then $f\otimes y_0:\Omega\to X\pten Y$, mapping $\omega\in\Omega$ to $f(\omega)\otimes y_0$, is $\mu$-Bochner integrable and
$$
\int_\Omega f\otimes y_0\,d\mu = \pare{\int_\Omega f\,d\mu}\otimes y_0 .
$$
\end{lemma}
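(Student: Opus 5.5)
The idea is to write $f\otimes y_0$ as the composition $J\circ f$, where
$$
J\colon X\to X\pten Y,\qquad J(x)=x\otimes y_0 .
$$
Then the lemma follows from the fact, recalled in the preliminaries, that bounded operators preserve Bochner integrability and commute with the Bochner integral. So the only real work is to check that $J$ is a bounded linear operator.

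Linearity of $J$ is immediate from bilinearity of $\otimes$. For boundedness, the projective norm of an elementary tensor satisfies $\|x\otimes y_0\|_\pi\leq \|x\|\,\|y_0\|$: this follows from \eqref{projective-norm} using the single-term representation of $x\otimes y_0$. Hence $\|J\|\leq\|y_0\|$, and $J\in\mathcal{L}(X,X\pten Y)$. The reverse inequality is not needed.

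Once $J$ is known to be bounded, the statement recalled in the preliminaries applies directly: if $f$ is $\mu$-Bochner integrable and $T\in\mathcal{L}(X,Z)$, then $T\circ f$ is $\mu$-Bochner integrable and $\int_\Omega T f\,d\mu=T\pare{\int_\Omega f\,d\mu}$. Taking $T=J$ and $Z=X\pten Y$ gives both conclusions at once: $f\otimes y_0=J\circ f$ is Bochner integrable, and
$$
\int_\Omega f\otimes y_0\,d\mu=J\pare{\int_\Omega f\,d\mu}=\pare{\int_\Omega f\,d\mu}\otimes y_0 .
$$

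For completeness, one can also verify this from scratch. Measurability is clear: $f$ is the $\mu$-a.e.\ limit of simple functions $s_n$, so $J\circ s_n$ are simple and converge $\mu$-a.e.\ to $J\circ f$. Integrability follows from $\|J f\|\leq\|y_0\|\,\|f\|$. The identity holds for simple functions by linearity of $\otimes$ in the first variable. It then passes to the limit, because $\int\|s_n-f\|\,d\mu\to0$ implies $\int\|J s_n-J f\|\,d\mu\to0$.

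There is no genuine obstacle here. The only point needing any attention is the bound $\|J\|\leq\|y_0\|$, which is what makes the operator result applicable.
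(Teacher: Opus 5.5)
Your proof is correct, and it is shorter than the paper's. You factor $f\otimes y_0=J\circ f$ with $J(x)=x\otimes y_0$ and note $\|J\|\leq\|y_0\|$. You then invoke the general fact, already recorded in the paper's preliminaries, that bounded operators preserve Bochner integrability and commute with the Bochner integral.

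The paper instead verifies the definition by hand:
\begin{itemize}
\item weak measurability, via $T\circ(f\otimes y_0)=(y_0\circ T)\circ f$ with $y_0\circ T\in X^*$ for each $T\in(X\pten Y)^*=\mathcal{L}(X,Y^*)$;
\item essential separability, via $X_0\otimes\Span\{y_0\}$;
\item the norm bound on $\int_\Omega\|f\otimes y_0\|\,d\mu$;
\item the identity itself, by testing against every $T\in(X\pten Y)^*$.
\end{itemize}
This is the operator fact unpacked for your $J$: the functional $y_0\circ T$ is exactly $J^*T$, and the final duality computation is the standard proof that $\int_\Omega Jf\,d\mu=J\bigl(\int_\Omega f\,d\mu\bigr)$.

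Your version makes clear that the lemma is an immediate corollary of the preliminaries. The paper's version is self-contained and works directly with its chosen definition of $\mu$-measurability (weak measurability plus essential separable valuedness). Your ``from scratch'' paragraph instead uses the characterization by a.e.\ limits of simple functions. That is equivalent by Pettis' measurability theorem. Weak measurability of $J\circ f$ in the paper's everywhere sense also follows at once from $T\circ J\circ f=(J^*T)\circ f$.
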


\begin{proof}
By assumption, $f$ is weakly measurable. Given $T\in (X\pten Y)^*=\mathcal{L}(X,Y^*)$, we have $y_0\circ T\in X^*$ as $y_0\in Y^{**}$, thus $y_0\circ T\circ f = T\circ (f\otimes y_0)$ is measurable. This means that $f\otimes y_0$ is weakly measurable. If $f(\Omega)$ is essentially contained in a separable subspace $X_0\subset X$, then $(f\otimes y_0)(\Omega)$ is essentially contained in the separable space $X_0\otimes\Span\set{y_0}$, hence $f\otimes y_0$ is $\mu$-measurable. Finally, we have
$$
\int_\Omega\norm{f\otimes y_0}\,d\mu = \int_\Omega\norm{f}\norm{y_0}\,d\mu = \norm{y_0}\int_\Omega\norm{f}\,d\mu < \infty
$$
and therefore $f\otimes y_0$ is Bochner integrable if $f$ is. Now put $x_0 = \int_\Omega f\,d\mu$. For any $T\in (X\pten Y)^*=\mathcal{L}(X,Y^*)$, we have
\begin{align*}
T\pare{\int_\Omega f(x)\otimes y_0\,d\mu(x)} &= \int_\Omega T(f(x),y_0)\,d\mu(x) \\
&= \int_\Omega (y_0\circ T)(f(x))\,d\mu(x) \\
&= (y_0\circ T)\pare{\int_\Omega f(x)\,d\mu(x)} = (y_0\circ T)(x_0) = T(x_0\otimes y_0) .
\end{align*}
Thus we must have the desired equality.
\end{proof}

\begin{proof}[Proof of Theorem \ref{thm:c0Y}]
It is clear that 
\begin{equation*} 
\FNA_{\pi}(c_0 \pten Y)\subseteq\NA_{\pi}(c_0 \pten Y)\subseteq\INA_{\pi}(c_0 \pten Y) 
\end{equation*} 
and also $\FNA_{\pi}(c_0 \pten Y)\subseteq c_0 \otimes Y$. This means we just need to prove $\INA_{\pi}(c_0 \pten Y)\subseteq\FNA_{\pi}(c_0 \pten Y)$. Let $u\in \INA_\pi(c_0\pten Y)$ with $\norm{u}_\pi=1$. Then, there exists a positive Borel measure $\mu$ on $B_{c_0}\times B_Y$ with $\norm{\mu}=1$ and $$u=\int_{B_{c_0}\times B_Y} x\otimes y\ d \mu(x,y).$$  By Proposition \ref{theo:1-INA}, there is some $T\in \mathcal{L}(c_0, Y^*)$ with $\norm{T}=1$ and $T(x)(y)=1$ for $\mu$-almost every pair $(x,y)$. In particular, since $Y^*$ is strictly convex, it follows from \cite[Proposition 4]{Lindenstrauss63} that there is some $i_0\in \N$ such that $T(e_i)=0$ for all $i>i_0$, where $e_i$ are the standard basis elements of $c_0$. Taking the minimal $i_0$ satisfying this condition and reordering the coordinates if necessary, we may assume that $T(e_i)\neq 0$ for all $i\leq i_0$.

We claim that, for $(x,y)\in B_{c_0}\times B_Y$ such that $T(x)(y)=1$, we have $|x(i)|=1$ for all $i\leq i_0$. Indeed, suppose otherwise that $|x(i)|<1$ for some $i\leq i_0$. Then we can find $\varepsilon>0$ such that $|x(i)\pm \varepsilon|\leq 1$, therefore $x\pm \varepsilon e_i\in B_{c_0}$ and $T(x)=\frac{1}{2}(T(x+\varepsilon e_i)+T(x-\varepsilon e_i))$. However, this is impossible because $T(e_i)\neq 0$ and $T(x)$ is an extreme point of $B_{Y^*}$, as $\norm{T(x)}\geq\abs{T(x)(y)}=1$ and $Y^{*}$ is strictly convex. Consider then the set 
\begin{equation*} 
Z=\{ z\in B_{c_0} : |z(i)|=1 \text{ for } 1\leq i\leq i_0, \text{ and } z(i)=0 \text{ for } i>i_0\}. 
\end{equation*} 
It is clear that $Z$ has $2^{i_0}$ elements, so we can write $Z=\{ z_1,\ldots, z_{2^{i_0}}\}$. Define also the set 
\begin{equation*} 
W=\{w\in B_{c_0} : w(i)=0 \text{ for }  1\leq i\leq i_0 \}. 
\end{equation*}
Observe that, if $(x,y)\in B_{c_0}\times B_Y$ are such that $T(x)(y)=1$, then there are unique $z_{j_x}\in Z$ and $w_x\in W$ such that $x=z_{j_x}+w_x$. It is straightforward that $T(x)=T(z_{j_x})$, since $T(W)=\{0\}$. Hence, for each $1\leq k\leq 2^{i_0}$ define the closed set 
\begin{equation*} 
A_k=\{ (x,y)\in B_{c_0}\times B_Y \,:\, T(x)(y)=1 \text{ and } j_x =k\}. 
\end{equation*}
Since $T(z_k)(y)=1$ for all $(x,y)\in A_k$ and $Y$ is strictly convex, we conclude that there is some $y_k\in B_Y$ such that $y=y_k$ for all $(x,y)\in A_k$. 
Moreover, observe that the sets $A_k$ are pairwise disjoint and 
\begin{equation*} 
\bigcup_{1\leq k\leq 2^{i_0}} A_k =\{ (x,y)\in B_{c_0}\times B_Y : T(x)(y)=1\},
\end{equation*}
which is equal to $B_{c_0}\times B_Y$, up to a $\mu$-null set. Finally, for each $1\leq k\leq 2^{i_0}$, write
\begin{equation*}
x_k= \int_{A_k}  x \ d\mu(x,y)\in B_{c_0}, 
\end{equation*}
which is clearly a well-defined Bochner integral.
Hence, using Lemma \ref{lemma:bochner-commute-tensor}, it is easy to see that \begin{align*}
    u&=\int_{B_{c_0}\times B_Y} x\otimes y \ d\mu(x,y)=\sum_{k=1}^{2^{i_0}}\int_{A_k}  x\otimes y\ d\mu(x,y)\\&=\sum_{k=1}^{2^{i_0}}\int_{A_k}  x\otimes y_k\ d\mu(x,y)=\sum_{k=1}^{2^{i_0}}\left(\int_{A_k}  x \ d\mu(x,y)\right)\otimes y_k=\sum_{k=1}^{2^{i_0}} x_k\otimes y_k .
\end{align*} 
Furthermore, since $y_k\in B_Y$ and $A_k\subseteq B_{c_0}\times B_Y$ for all $1\leq k\leq 2^{i_0}$, we conclude
\begin{align*}
    \sum_{k=1}^{2^{i_0}} \norm{x_k}\norm{y_k} \leq \sum_{k=1}^{2^{i_0}}\mu(A_k)\leq \norm{\mu}=\norm{u}_\pi.
\end{align*}
This shows that $u\in \FNA_\pi(c_0\pten Y)$.
\end{proof}

Theorem \ref{thm:c0Y} shows, in particular, that 
\begin{equation*} 
\INA_\pi(c_0\pten L_p)\subseteq c_0\otimes L_p \ \ \ \mbox{for} \ \ \ p\in (1,\infty) 
\end{equation*} 
when considering {\it real} scalars. While we are not able to prove this for complex scalars in full generality, our next result will imply that the same holds true for certain values of $p\in (1,2]$ (see Corollary \ref{theo:ellps} below).

In what follows, we present several additional pairs of Banach spaces $X,Y$ such that $\INA(X\pten Y) \subseteq X\otimes Y$ (see Theorem \ref{theo:ellps-2} below), which are not covered by Theorem \ref{thm:c0Y} above. Before proceeding, we introduce some notation. Let $Z$ be a Gâteaux smooth Banach space. We denote by $\rho: Z \to Z^*$ the mapping that assigns to each $z \in Z$ the unique functional $z^* \in Z^*$ satisfying $\langle z^*, z\rangle = \|z^*\|\cdot\|z\| = \|z\|^2$. 
When $\rho$ is restricted to $S_Z$, it becomes a mapping from $S_Z$ into $S_{Z^*}$; this restriction is usually called the \emph{spherical image map} of $S_Z$.

\begin{theorem} \label{theo:ellps-2}
Let $X$ be a Banach space and $Y$ a Banach space whose dual is Gâteaux smooth. Suppose that
\begin{enumerate}
\item every norm-attaining operator in $\mathcal{L}(X,Y^*)$ is of finite rank and
\item given any finite-dimensional subspace $Z$ of $Y^*$, $\dim (\Span (\rho(Z)))<\infty$.
\end{enumerate}
Then, we have that 
\[
\INA_{\pi}(X\pten Y) \subseteq X\otimes Y. 
\]
In particular, whenever both $X$ and $Y$ are infinite-dimensional Banach spaces, then under the previous assumptions, we have that 
\[
\INA_{\pi}(X\pten Y) \neq X\pten Y.
\]
\end{theorem}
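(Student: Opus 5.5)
Take $u\in\INA_\pi(X\pten Y)$ with $\norm{u}_\pi=1$, witnessed by a positive Borel measure $\mu$ concentrated on $S_X\times S_Y$ (Remark \ref{remark:INA-properties}(b)). By Proposition \ref{theo:1-INA}, fix $T\in S_{\mathcal{L}(X,Y^*)}$ with $\duality{Tx,y}=1$ for $\mu$-almost every $(x,y)$. Such $T$ attains its norm at any such $x$, because $\norm{Tx}\geq\abs{\duality{Tx,y}}=1$. Hypothesis (1) then gives that $T$ has finite rank, so $Z:=T(X)$ is a finite-dimensional subspace of $Y^*$.

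The key step is to locate $y$ for $\mu$-almost every $(x,y)$. For such a pair, $Tx\in S_{Y^*}$ (since $\norm{Tx}\le 1$) and $y\in S_Y$ with $\duality{Tx,y}=1$. Viewing $y$ as an element of $Y^{**}=(Y^*)^*$, it norms $Tx$. Because $Y^*$ is Gâteaux smooth, the norming functional of $Tx$ is unique, so $y=\rho(Tx)$ under the canonical embedding of $Y$ into $Y^{**}$. Hence $y\in W:=\Span(\rho(Z))\cap Y$, and by hypothesis (2) this is a finite-dimensional subspace of $Y$. The subtlety to check here is that $\rho$ is the duality map of $Y^*$, taking values in $Y^{**}$. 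Hypothesis (2) should be read in that sense, and the fact that $y$ lies in $Y$ is all we need.

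Next, we transfer the integral into $X\otimes W$. Choose a basis $w_1,\dots,w_m$ of $W$ with biorthogonal functionals $w_j^*\in Y^*$ (Hahn--Banach extensions). Then $\mu$-almost everywhere
\[
x\otimes y=\sum_{j=1}^m x\otimes w_j^*(y)\,w_j .
\]
Each map $(x,y)\mapsto w_j^*(y)\,x$ is Bochner integrable into $X$. It is weakly measurable by continuity, and essentially separably valued because $\varphi$ is; alternatively, apply the bounded operators $\id_X\otimes w_j^*\colon X\pten Y\to X$ to $\varphi$. Define
\[
x_j:=\int w_j^*(y)\,x\,d\mu(x,y).
\]
Lemma \ref{lemma:bochner-commute-tensor} and linearity then give
\[
u=\sum_{j=1}^m x_j\otimes w_j\in X\otimes Y .
\]

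For the final claim, if $X$ and $Y$ are infinite-dimensional, then $X\otimes Y\neq X\pten Y$. Indeed, taking normalized biorthogonal-ish sequences (e.g. via Auerbach-type constructions) gives an element $\sum_n 2^{-n}x_n\otimes y_n$ of infinite rank, since the tensors of finite rank are exactly the algebraic tensor product. So $\INA_\pi(X\pten Y)\neq X\pten Y$. I expect the main obstacle to be the identification $y=\rho(Tx)$, i.e., the correct use of Gâteaux smoothness of $Y^*$ together with hypothesis (2) inside $Y^{**}$. A secondary point is the routine measurability needed to split the integral.
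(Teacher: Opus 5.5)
Your proposal is correct and follows essentially the same route as the paper. Proposition \ref{theo:1-INA} gives $T$ with $\langle Tx,y\rangle=1$ $\mu$-a.e., hypothesis (1) makes $T$ finite rank, Gâteaux smoothness of $Y^*$ forces $y=\rho(Tx)$, hypothesis (2) confines $y$ to a finite-dimensional subspace of $Y$, and Lemma \ref{lemma:bochner-commute-tensor} splits the integral along a basis; your justification of Bochner integrability via the operators $\id_X\otimes w_j^*$ is slightly cleaner than the paper's, and your infinite-rank sketch fills in the ``in particular'' step that the paper leaves implicit (though the bounded biorthogonal system should come from a basic sequence, since Auerbach's lemma is a finite-dimensional result).
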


We will provide specific examples of pairs of Banach spaces satisfying the hypotheses of Theorem \ref{theo:ellps-2} in Corollary \ref{theo:ellps} below.

\begin{proof} We adapt the argument from the proof of \cite[Theorem 3.1]{Rueda}. Fix $u\in\INA_{\pi}(X\pten Y)$ with $\norm{u}=1$, so there is a positive finite Borel measure $\mu$ on $B_X\times B_Y$ with $\norm{\mu}=1$ such that
$$
u = \int_{S_{X}\times S_{ Y}} x\otimes y\,d\mu(x,y),
$$
recalling that $\mu$ is concentrated on $S_{X}\times S_{Y}$. 

Let $T\in S_{\mathcal{L}(X, Y^*)}$ with $T(u)=1$.
By Theorem \ref{theo:1-INA}, there exists a $\mu$-null Borel set $N \subseteq S_X \times S_Y$ such that we have $T(x)(y)=1$ for every pair $(x,y) \in S_X \times S_Y \setminus N$. If we denote by $\rho : S_{Y^*} \to S_{Y^{**}}$ the spherical image map for $S_{Y^*}$, then we have $\rho(T(x)) = y \in Y$ for every $(x,y) \in S_X \times S_Y \setminus N$, and
$$
u = \int_{ (S_{X}\times S_{Y}) \setminus N} x\otimes \rho(T(x))\,d\mu(x,y).
$$
In particular, observe that
\[
F:= \spann \{ \rho (T(x)) : (x,y)\in S_X \times S_Y \setminus N \} 
\]
is a subspace of $Y$. Since $T\in\NA(X, Y^*)$, assumption (1) implies that $T$ is of finite rank. Let $Z := T(X) \subseteq Y^*$. By assumption (2), the space $\spann (\rho (Z))$ is a finite-dimensional subspace of $Y^{**}$. Since $F\subseteq Y$ and $F \subseteq  \spann \rho (Z)$, we conclude that $F$ is a finite-dimensional subspace of $Y$. 

Fix a normalized base $\set{w_1,\ldots,w_s}$ of $F$ and let $w_1^*,\ldots,w_s^*$ be the corresponding coordinate functionals; extend them to elements of $Y^*$. Then
\begin{align*}
u &= \int_{(S_{X}\times S_{Y})\setminus N} x\otimes\pare{\sum_{i=1}^s w_i^*(\rho(T(x)))w_i} \,d\mu(x,y) \\
&= \sum_{i=1}^s \int_{(S_{X}\times S_{ Y}) \setminus N} w_i^*(\rho(T(x)))\cdot x\otimes w_i \,d\mu(x,y) \\
&= \sum_{i=1}^s \int_{S_{X}\times S_{ Y}} w_i^*(y)\cdot x\otimes w_i \,d\mu(x,y) \\
&= \sum_{i=1}^s \pare{\int_{S_{X}\times S_{ Y}} w_i^*(y)\cdot x \,d\mu(x,y)}\otimes w_i.
\end{align*}

Note that the measure $\mu$ is concentrated in pairs $(x,y)=(x,\rho(T(x)))$ by its definition, and thus the Bochner integral in the last line is valid and yields an element of $X$ by Lemma \ref{lemma:bochner-commute-tensor}, since the mapping $(x,y)\mapsto w_i^*(y)\cdot x$ is continuous in $B_X\times B_Y$ and therefore measurable.
\end{proof}

Condition (1) in Theorem \ref{theo:ellps-2}, i.e., the condition that all norm-attaining operators are of finite rank (note that this condition was already used in the proof of Theorem \ref{thm:c0Y}) is a phenomenon that is known to occur in several cases in norm-attaining theory. For instance, let us denote by $G$ the predual of the Lorentz sequence space $d(\{\frac{1}{n}\},1)$ (sometimes, referred to as the \emph{Gowers space}) consisting of all sequences $(a_n) \in c_0$ such that 
\begin{equation*}
    \lim_{N \rightarrow \infty} \left( \frac{\sum_{i=1}^N a_i^*}{\sum_{i=1}^N \frac{1}{i}} \right) = 0
 \end{equation*}
 where $(a_n^*)$ is the decreasing rearrangement of $(|a_n|)$. We endow this space with the following norm:
 \begin{equation*}
     \|(a_n)\| := \max_{N \in \N} \left( \frac{\sum_{i=1}^N a_i^*}{\sum_{i=1}^N \frac{1}{i}} \right).
 \end{equation*} 
 It is known \cite{Aguirre} that Gowers space $G$ has the property that every norm-attaining operator from $G$ into any strictly convex space is of finite rank.

Moreover, whenever the norm $X$ is locally dependent upon finitely many coordinates and $Y$ is strictly convex, we have the same by \cite[Lemma 2.8]{Martin}. Recall that the norm of a Banach space $X$ is said to \emph{locally depend upon finitely many coordinates} (LFC, for short) if for every $x \in X \setminus \{0\}$, there are $\eps>0$, a subset $\{x_1^*, \ldots, x_N^*\} \subseteq X^*$ and a continuous function $\varphi: \R^N \to \R$ satisfying $\|y\| = \varphi(x_1^*(y), \ldots, x_N^*(y))$ for $y \in X$ such that $\|y-x\| < \eps$. For instance, spaces of the form $c_0 (\Gamma)$, as well as their closed subspaces, are typical examples of LFC Banach spaces.

On the other hand, condition (2) from Theorem \ref{theo:ellps-2} is trivially satisfied by Hilbert spaces (as $\rho$ is the identity map in that case), but we can show that it is also satisfied by $L_p$ spaces for certain other values of $p$.

\begin{lemma}
Let $(\Omega, \Sigma, \mu)$ be a measure space and $n\in\N$. If $E$ is a finite-dimensional subspace of $L_{2n}(\mu)$, then there is a finite-dimensional subspace $F$ of $L_p(\mu)=L_{2n}(\mu)^*$, where $p=\frac{2n}{2n-1}$, such that $\rho(E)\subset F$.
\end{lemma}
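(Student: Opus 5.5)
The plan is to write the duality map explicitly and observe that it is a polynomial in the coordinates of $f\in E$. For $q=2n$, the space $L_{2n}(\mu)$ is smooth. For $f\in L_{2n}(\mu)$, $f\neq0$, the unique functional $\rho(f)\in L_p(\mu)$ with $\langle\rho(f),f\rangle=\|f\|_{2n}^2$ and $\|\rho(f)\|_p=\|f\|_{2n}$ is
\[
\rho(f)=\|f\|_{2n}^{2-2n}\,|f|^{2n-2}\,\overline{f},
\]
with $\rho(0)=0$. This is the standard formula. It is checked by computing $\langle\rho(f),f\rangle=\|f\|^{2-2n}\int|f|^{2n}=\|f\|^2$, and by noting that $\|\,|f|^{2n-1}\|_p^p=\int|f|^{2n}$, so that $\|\rho(f)\|_p=\|f\|^{2-2n}\|f\|^{2n-1}=\|f\|$. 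Uniqueness comes from Gâteaux smoothness of $L_{2n}$ (equivalently, strict convexity of $L_p$). The positive scalar factor $\|f\|^{2-2n}$ plays no role in spans, so it suffices to show that the functions $|f|^{2n-2}\overline f=f^{n-1}\,\overline{f}^{\,n}$ (using $|f|^2=f\overline f$) lie in a fixed finite-dimensional subspace as $f$ ranges over $E$.

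The key step is then an algebraic expansion. Fix a basis $g_1,\dots,g_m$ of $E$ and write $f=\sum_j a_jg_j$. Expanding $f^{n-1}\overline f^{\,n}$ multinomially gives a linear combination, with coefficients that are monomials in $a_j,\overline{a_j}$, of the finitely many functions
\[
h_{\alpha,\beta}=\prod_j g_j^{\alpha_j}\,\overline{g_j}^{\,\beta_j},\qquad |\alpha|=n-1,\ |\beta|=n.
\]
In the real case the conjugations disappear and we get products $g_{j_1}\cdots g_{j_{2n-1}}$. Define $F=\operatorname{span}\{h_{\alpha,\beta}\}$. It is finite-dimensional, with dimension at most $\binom{m+n-2}{n-1}\binom{m+n-1}{n}$, and $\rho(E)\subset F$ by construction.

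The only point that needs checking, and I expect it to be the main (mild) obstacle, is that each $h_{\alpha,\beta}$ actually lies in $L_p(\mu)$, so that $F$ is a subspace of $L_p(\mu)$ and not just of the space of measurable functions. Each $h_{\alpha,\beta}$ is a product of $2n-1$ functions from $L_{2n}(\mu)$. By the generalized Hölder inequality with exponents $\frac1{2n}+\cdots+\frac1{2n}=\frac{2n-1}{2n}=\frac1p$, we get $\|h_{\alpha,\beta}\|_p\le\prod\|g_{j}\|_{2n}<\infty$. This gives $F\subset L_p(\mu)$ and completes the argument.

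Note that this also explains the restriction to even exponents: for other $q$ the factor $|f|^{q-2}$ is not polynomial in $f,\overline f$, and the expansion fails.
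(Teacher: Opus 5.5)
Your proposal is correct and takes essentially the same route as the paper: write $\rho(f)$ as (a positive multiple of) $f^{n-1}\overline{f}^{\,n}$, expand it multinomially in a basis of $E$, and use the generalized H\"older inequality to place the finitely many resulting products in $L_p(\mu)$. The only difference is cosmetic: you handle all of $E$ through the factor $\|f\|_{2n}^{2-2n}$, while the paper works with $f$ on the unit sphere.
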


\begin{proof}
For $f\in S_{ L_{2n}(\mu)}$, let $\rho(f)$ be the unique element of $S_{ L_p(\mu)}$ such that $\duality{\rho(f),f}=1$ (note that functions that are equal a.e. are identified, and the spaces $L_p(\mu)$ and $L_{2n}(\mu)$ are both strictly convex). Since $\abs{z}^{2n}=z^{n} \overline{z}^{n}$ for $z\in\C$, we get
$$
\rho(f)(x) = f(x)^{n-1} \overline{f(x)}^{n}.
$$
for almost every $x\in\Omega$.

Let $E=\Span\set{v_1,\ldots,v_r}\subset L_{2n}(\mu)$. Then any $f\in E$ can be written as $f=\sum_{i=1}^r a_iv_i$ for some coefficients $a_i$. Thus, 
\begin{align*}
\rho(f) &= \pare{\sum_{i=1}^r a_iv_i}^{n-1}\pare{\sum_{i=1}^r \overline{a_i}\overline{v_i}}^{n} \\
&= \sum_{\substack{k_1+\ldots+k_r=n-1 \\ j_1+\ldots+j_r=n}} \frac{(n-1)!}{k_1!\ldots k_r!}\frac{n!}{j_1!\ldots j_r!}(a_1v_1)^{k_1}\cdots (a_rv_r)^{k_r}(\overline{a_1}\overline{v_1})^{j_1}\cdots (\overline{a_r}\overline{v_r})^{j_r} \\
&= \sum_{\substack{k_1+\ldots+k_r=n-1 \\ j_1+\ldots+j_r=n}} \frac{(n-1)!\,n!\cdot a_1^{k_1}\ldots a_r^{k_r} \overline{a_1}^{j_1}\cdots \overline{a_r}^{j_r}}{k_1!\cdots k_r!j_1!\cdots j_r!}\cdot v_1^{k_1}\cdots v_r^{k_r}\overline{v_1}^{j_1}\cdots \overline{v_r}^{j_r}
\end{align*}
(where products and powers are assumed to apply pointwise) belongs to the linear span of the finite set
$$
\set{v_1^{k_1}\cdots v_r^{k_r}\overline{v_1}^{j_1}\cdots \overline{v_r}^{j_r} \,:\, k_1+\ldots+k_r=n-1, j_1+\ldots+j_r=n}
$$
which is a subset of $ L_p(\mu)$ by H\"older's inequality.
\end{proof}

\begin{remark}
   The choice of $p=\frac{2n}{2n-1}$ (equivalently, with conjugate exponent $q=2n$) arises naturally in this context. Indeed, the spherical image map $\rho : S_{L_q (\mu)} \to S_{L_p (\mu)}$ is a polynomial in the variables $f$ and $\overline{f}$ if and only if $q$ is an even integer. Moreover, if $q$ is not an even integer, then condition (2) from Theorem \ref{theo:ellps-2} may fail. As a simple example, when $p=\frac{3}{2}$ (so $q=3$), the image of the finite-dimensional subspace $E=\text{span}\{1,x\} \subseteq L_3 [0,1]$ under the map $\rho:L_{3} [0,1]\to L_{\frac{3}{2}} [0,1]$ is infinite-dimensional.
\end{remark}

\begin{corollary} \label{theo:ellps}
Let $(\Omega, \Sigma, \mu)$ be a measure space. Let $n\in\N$ and $p=\frac{2n}{2n-1}$. Then, we have that
\[
\INA_{\pi}(X\pten L_p(\mu)) \subset X\otimes L_p(\mu) 
\]
whenever $X$ is an LFC space (e.g. $c_0$) or Gowers space $G$. 
\end{corollary}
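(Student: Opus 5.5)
The plan is to apply Theorem~\ref{theo:ellps-2} with $Y=L_p(\mu)$, $p=\frac{2n}{2n-1}$, so that $Y^*=L_{2n}(\mu)$ with conjugate exponent $q=2n$. This means checking three things: that $Y^*$ is Gâteaux smooth, and that hypotheses (1) and (2) hold.

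\textbf{Gâteaux smoothness.} Since $1<2n<\infty$, the space $L_{2n}(\mu)$ is uniformly convex and uniformly smooth. In particular it is Gâteaux smooth, and the duality map $\rho$ on it is well defined.

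\textbf{Hypothesis (2).} This is exactly the preceding Lemma. For any finite-dimensional $E\subset L_{2n}(\mu)=Y^*$, the image $\rho(E)$ lies in a finite-dimensional subspace of $L_p(\mu)$. Since $L_p(\mu)$ is reflexive, $L_p(\mu)\subseteq Y^{**}$ canonically. Moreover, the spherical image map of $S_{Y^*}$ into $S_{Y^{**}}$ coincides with the map $\rho$ computed in the Lemma, by uniqueness of the norming functional. The homogeneous extension $\rho(z)=\|z\|\rho(z/\|z\|)$ is the same polynomial formula $f^{n-1}\overline{f}^{\,n}$, so it also lands in the same span. Hence $\dim\Span(\rho(Z))<\infty$ for every finite-dimensional $Z\subseteq Y^*$.

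\textbf{Hypothesis (1).} Here we need every $T\in\NA(X,L_{2n}(\mu))$ to have finite rank. Since $Y^*=L_{2n}(\mu)$ is strictly convex (indeed uniformly convex), this is available from two sources:
\begin{itemize}
\item If $X$ is LFC, it follows from \cite[Lemma 2.8]{Martin}.
\item If $X=G$ is Gowers' space, it follows from \cite{Aguirre}, since every norm-attaining operator from $G$ into a strictly convex space has finite rank.
\end{itemize}

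With all three hypotheses verified, Theorem~\ref{theo:ellps-2} gives $\INA_\pi(X\pten L_p(\mu))\subseteq X\otimes L_p(\mu)$.

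The only step requiring real care is the identification in hypothesis (2). The Lemma is phrased with $\rho$ mapping $L_{2n}$ into its dual $L_p$, whereas Theorem~\ref{theo:ellps-2} uses $\rho:Y^*\to Y^{**}$. Reflexivity of $L_p$ reconciles the two. Everything else is a direct citation of earlier results.
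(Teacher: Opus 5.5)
Your proposal is correct and follows essentially the paper's route: apply Theorem~\ref{theo:ellps-2} with $Y=L_p(\mu)$ and $Y^*=L_{2n}(\mu)$, obtain (1) from \cite{Aguirre} and \cite[Lemma 2.8]{Martin} via strict convexity of $L_{2n}(\mu)$, and obtain (2) from the preceding Lemma together with reflexivity. One small imprecision: off the sphere the duality map is $\rho(f)=\|f\|^{2-2n}f^{n-1}\overline{f}^{\,n}$ rather than literally $f^{n-1}\overline{f}^{\,n}$, but this is a scalar multiple, so it lies in the same span (and the proof of the theorem only uses $\rho$ on the sphere anyway).
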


We close this subsection with a remark addressing  simultaneously the density of $\NA_{\pi}$ and of its complement.

\begin{remark}
As observed in \cite[Remark 3.2(3)]{Rueda}, the set $(X\pten Y)\setminus (X\otimes Y)$ is dense in $X\pten Y$ whenever $X$ and $Y$ are infinite-dimensional. Hence, we can enlarge the class of examples of Banach spaces $X, Y$ for which both $\NA_\pi(X\pten Y)$ (resp. $\INA_\pi(X\pten Y)$) and its complement are simultaneously dense. It is clear that this occurs for infinite-dimensional Banach spaces $X$ and $Y$ whenever they satisfy the hypotheses of Theorems \ref{thm:c0Y} or \ref{theo:ellps-2}, and $\overline{\NA_\pi(X\pten Y)} = X\pten Y$ (see the introduction for further comments on the denseness of norm-attaining elements), thus generalizing \cite[Theorem 3.3]{Rueda}. In particular, this is the case for $c_0 \pten L_p$ with $p \in (1,\infty)$ in the real setting, and for $c_0 \pten L_p$ with $p = \frac{2n}{2n-1}$, $n \in \mathbb{N}$, in the complex case. Finally, observe that this phenomenon occurs much more frequently for the set $\FNA_\pi$ than for its analogues $\NA_\pi$ and $\INA_\pi$ as $\FNA_\pi(X\pten Y) \subseteq X \otimes Y$, and therefore its complement, is always dense whenever $X$ and $Y$ are infinite-dimensional.
\end{remark}

\subsection{\texorpdfstring{$L_1$}{L1}-spaces} The problem of norm attainment in $L_1(\mu)\pten Y$ has been treated in recent works. It is known that $\overline{\NA_\pi (L_1(\mu)\pten Y)}=L_1(\mu)\pten Y$  for any measure $\mu$ and any Banach space $Y$ (see for example \cite[Corollary 4.3]{GGR}). Furthermore, when $\mu$ is purely atomic, then every element of $L_1(\mu)\pten Y$ is norm-attaining \cite[Proposition 3.3]{DJRR}. This is also the case for any measure $\mu$ when $Y$ is a finite-dimensional polyhedral space thanks to \cite[Theorem 4.1]{DGJR} and \cite[Lemma 3.1]{GGR} (see \cite[Corollary 3.5]{GGR} for a more general statement), observing that $L_1(\mu)$ is 1-complemented in its bidual. For non-purely-atomic $\mu$, not every element of $L_1(\mu)\pten Y$ is norm-attaining in general; some counterexamples are provided in \cite[Example 3.12]{DJRR}. 

In this section, we isolate a geometric property of $Y$ that is sufficient for non-norm-attaining tensors to exist whenever $\mu$ is not purely atomic.

\begin{theorem} \label{theorem:L1-result2}
    Let $(\Omega, \Sigma, \mu)$ be a measure space with $\mu$ not purely atomic.
    Let $Y$ be a Banach space such that $Y^*$ has the RNP and the following property is satisfied:
    \begin{equation}\label{propertyAGR}
    \begin{split}
    &\text{There exists a Borel measurable map $F:[0,1]\to S_Y$ such that} \\
    &\text{for each $y^*\in S_{Y^*}$ we have $|y^*(F(t))|<1$ for almost all $t\in[0,1]$.}
    \end{split}
    \end{equation}
    Then $$\INA_\pi(L_1(\mu)\pten Y^*)\neq L_1(\mu)\pten Y^*.$$ In particular, $\NA_\pi(L_1(\mu)\pten Y^*)\neq L_1(\mu)\pten Y^*$.
\end{theorem}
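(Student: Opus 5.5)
The plan is to reduce to $L_1[0,1]$, build an explicit tensor $u\in L_1[0,1]\pten Y^*$ together with a functional $B$ that attains its norm (as a functional) at $u$, and then use Corollary \ref{corollary1:INA}: if $u$ were in $\INA_\pi$, then $B$ would attain its norm as a bilinear form. Property \eqref{propertyAGR} will forbid that.

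\emph{Reduction.} Since $\mu$ is not purely atomic, there is $A\in\Sigma$ with $0<\mu(A)<\infty$ such that $\mu|_A$ is atomless. By the Sierpiński/Maharam argument, there is a measurable $\theta:A\to[0,1]$ whose pushforward of $\mu|_A/\mu(A)$ is Lebesgue measure. Then $g\mapsto \mu(A)^{-1}(g\circ\theta)\chi_A$ is an isometric embedding of $L_1[0,1]$ into $L_1(\mu)$. Conditional expectation onto $\sigma(\theta)$, composed with restriction to $A$, gives a norm-one projection onto its range, so $L_1[0,1]$ is $1$-complemented in $L_1(\mu)$. Lemma \ref{lemma2:INA} and Corollary \ref{cor1:INA} are symmetric in the two factors (the same proof works with $P\otimes\id$). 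Hence it suffices to find $u\in L_1[0,1]\pten Y^*$ with $u\notin\INA_\pi$, and by Lemma \ref{lemma2:INA} the same $u$ is then not in $\INA_\pi(L_1(\mu)\pten Y^*)$.

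\emph{Construction.} Identify $L_1[0,1]\pten Y^*=L_1([0,1],Y^*)$ (Bochner). Define $B\in\mathcal{B}(L_1[0,1]\times Y^*)$ by
\[
B(f,y^*)=\int_0^1 f(t)\,y^*(F(t))\,dt .
\]
This is well defined because $F$ is Borel and bounded, and it satisfies $\norm{B}\le 1$. On $L_1([0,1],Y^*)$ it acts by $B(u)=\int_0^1 u(t)(F(t))\,dt$. Next choose a strongly measurable $u:[0,1]\to S_{Y^*}$ with $u(t)(F(t))=1$ for a.e.\ $t$. Then $\norm{u}_\pi=\norm{u}_{L_1(Y^*)}=1=B(u)$, so $B$ attains its norm as a functional at $u$, and $\norm{B}=1$.

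\emph{Conclusion.} Suppose $u\in\INA_\pi$. Corollary \ref{corollary1:INA} gives $f\in S_{L_1}$ and $y^*\in S_{Y^*}$ with $|B(f,y^*)|=1$. However, by \eqref{propertyAGR} we have $|y^*(F(t))|<1$ a.e., so
\[
|B(f,y^*)|\le\int_0^1|f(t)|\,|y^*(F(t))|\,dt<\int_0^1|f|=1 .
\]
The inequality is strict because $|f|>0$ on a set of positive measure, on which the integrand is strictly smaller. This contradiction shows $u\notin\INA_\pi$. The final claim follows from $\NA_\pi\subseteq\INA_\pi$.

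\emph{Main obstacle.} The delicate step is producing a \emph{strongly measurable} norming selection $t\mapsto u(t)\in S_{Y^*}$, and this is where the RNP of $Y^*$ should enter. First, $F$ is essentially separably valued in some separable $Y_0\subseteq Y$. On the metrizable compact $(B_{Y_0^*},w^*)$, the Kuratowski--Ryll-Nardzewski theorem gives a weak$^*$-measurable selection of norming functionals. The difficulty is lifting this to a Bochner-measurable $Y^*$-valued map.

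If that fails, I would avoid the pointwise selection and argue via duality. Since $Y^*$ has the RNP, $L_1([0,1],Y^*)^*=L_\infty([0,1],Y^{**})$, and $B$ corresponds to $t\mapsto F(t)\in S_Y\subseteq S_{Y^{**}}$, which has norm $1$. I would then obtain $u$ either by a Bishop--Phelps perturbation, or by showing directly that this particular functional is exposed by some $u$. This requires care, because perturbing $B$ must preserve property \eqref{propertyAGR}. One way is to perturb only by $F'$ with $\norm{F-F'}_\infty$ small and to check that strict inequality a.e.\ survives, possibly after weakening \eqref{propertyAGR} to a quantitative form.
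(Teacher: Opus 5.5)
Your reduction to $L_1[0,1]$ (a uniformly distributed $\theta$ on an atomless piece of finite positive measure, plus conditional expectation) is sound. It is in fact more elementary than the paper's route through the $\ell_1$-decomposition of $L_1(\mu)$ and Maharam's theorem to $L_1([0,1]^\kappa)$. Your functional $B$ and the closing strict-inequality argument are exactly the paper's.

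The genuine gap is the step you flag yourself: you never construct a strongly measurable $u\colon[0,1]\to S_{Y^*}$ with $u(t)(F(t))=1$ a.e. Without such a $u$ there is no tensor at which $B$ attains its norm as a functional. Corollary~\ref{corollary1:INA} then has nothing to act on, and the argument produces no element outside $\INA_\pi$. Note also that, as written, your proof never uses the RNP of $Y^*$; this is precisely the step where it is needed. The paper closes it with the selection theorem \cite[Theorem I.4.2]{DGZ}. Since $Y^*$ has the RNP, there is $f\colon S_Y\to S_{Y^*}$ with $\langle f(y),y\rangle=1$ for all $y\in S_Y$ which is the pointwise limit of a sequence of norm-to-norm continuous maps. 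Such an $f$ is norm-Borel and maps the separable range of $F$ into a separable set. Hence $u=f\circ F$ is Bochner integrable and $\norm{u}_{L_1(Y^*)}=1=B(u)$.

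The routes you sketch do not fill this gap as they stand. For the Kuratowski--Ryll-Nardzewski route you would need two further ingredients.
\begin{itemize}
\item First, that the RNP of $Y^*$ forces $Y_0^*$ to be separable. Then a weak$^*$-measurable selection in $S_{Y_0^*}$ is automatically norm-Borel and separably valued, hence strongly measurable.
\item Second, and more seriously, a \emph{measurable, norm-preserving} lift from $Y_0^*$ (or from $V^*$ for a suitable separable $V\supseteq Y_0$) to $Y^*$. An example would be a linear isometric Hahn--Banach extension operator obtained by a separable-reduction argument.
\end{itemize}
Arbitrary pointwise Hahn--Banach extensions need not be measurable. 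Lifting countably-valued approximations only yields $\norm{u(t)}\le 1+\delta$, which is useless because you need $B(u)=\norm{u}_\pi$ exactly.

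The duality fallback is not viable either. The identity $L_1([0,1],Y^*)^*=L_\infty([0,1],Y^{**})$ requires the RNP of $Y^{**}$, not of $Y^*$; in general one only has $(L_1[0,1]\pten Y^*)^*=\mathcal{L}(L_1[0,1],Y^{**})$. Moreover, a Bishop--Phelps perturbation of $B$ is just some operator $L_1[0,1]\to Y^{**}$. There is no reason for it to be given by a $Y$-valued kernel $F'$ satisfying \eqref{propertyAGR}, so you lose control over the non-attainment half of the argument.
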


Before proving Theorem \ref{theorem:L1-result2}, we present a general class of spaces satisfying condition \eqref{propertyAGR}.

\begin{lemma}\label{lm:AGR sufficient condition}
If $Y$ is a Banach space containing a strictly convex subspace $Z$ with $\dim Z>1$, then $Y$ satisfies \eqref{propertyAGR}.
\end{lemma}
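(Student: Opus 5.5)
Pick two linearly independent unit vectors $z_1,z_2$ in $Z$ spanning a two-dimensional subspace $E$ of $Z$. Since $Z$ is strictly convex, so is $E$. The plan is to parametrize the unit sphere of $E$ by a continuous curve and check \eqref{propertyAGR} for this curve. Concretely, set
$$
F(t)=\frac{\cos(2\pi t)z_1+\sin(2\pi t)z_2}{\norm{\cos(2\pi t)z_1+\sin(2\pi t)z_2}},\qquad t\in[0,1].
$$
The denominator never vanishes by linear independence, so $F$ is continuous, hence Borel, and takes values in $S_E\subseteq S_Y$. Moreover $F$ is injective on $[0,1)$, and for each antipodal pair $\pm e\in S_E$ the preimage $F^{-1}(\{e,-e\})$ has at most three points (two, plus the endpoint $t=1$ repeating $t=0$).

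Now fix $y^*\in S_{Y^*}$ and let $e^*=y^*|_E$, so $\norm{e^*}\leq 1$. We must show that the set $\{t: |y^*(F(t))|=1\}$ is Lebesgue-null, and in fact we show it is finite. If $\norm{e^*}<1$ the set is empty. Otherwise $\norm{e^*}=1$, and the set $\{e\in S_E : |e^*(e)|=1\}$ is the union of the two faces $\{e^*=\lambda\}\cap B_E$, taken over the unimodular scalars $\lambda$.

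In the real case there are only two such faces, for $\lambda=\pm1$. Each face is a convex subset of $S_E$, and strict convexity of $E$ forces it to be a single point. Hence there are at most two points of $S_E$ where $|e^*|=1$, and they form an antipodal pair $\pm e_0$. Their preimage under $F$ has at most three points, which is null.

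In the complex case the scalar field is $\C$, and $E$ should then be taken to be a two-dimensional \emph{real} subspace inside $Z$, for instance $\Span_\R\{z_1,z_2\}$ with $z_2\notin\C z_1$; its norm is strictly convex as a real normed space. The condition $|y^*(e)|=1$ for $e\in S_E$ means $\mathrm{Re}(\bar\lambda y^*(e))=1$ for some unimodular $\lambda$. For each fixed $\lambda$ the real functional $e\mapsto\mathrm{Re}(\bar\lambda y^*(e))$ has norm at most $1$ on $E$, so by strict convexity it attains the value $1$ at no more than one point of $S_E$. However, $\lambda$ now ranges over a circle, so a direct count does not work. Instead, write $y^*|_E=a+ib$ with $a,b$ real-linear functionals on $E\cong\R^2$. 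Then $|y^*(e)|^2=a(e)^2+b(e)^2$ is a positive semidefinite quadratic form $q$ on $\R^2$. If $q$ is not a multiple of $\norm{\cdot}^2$ on the curve, then $q(e)=1$ together with $\norm{e}=1$ cuts out a small set. The precise claim is that $\{e\in S_E: q(e)=1\}$ is contained in $S_E\cap\{q=1\}$, where $\{q=1\}$ is an ellipse or a pair of parallel lines. Since $q\leq\norm{\cdot}^2$ on $E$, the ball $B_E$ lies inside the convex region $\{q\leq 1\}$. A strictly convex curve lying inside a convex region meets the region's boundary either in finitely many points or along a whole arc. Along an arc, however, the boundary of $B_E$ and the boundary of $\{q\le 1\}$ would coincide, and this can still happen, for instance when $E$ is Euclidean and $q=\norm{\cdot}^2$.

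This is the main obstacle in the complex case. To handle it, I would choose $E$ so that no complex functional has constant modulus on $S_E$. For example, take $E=\Span_\R\{z_1,z_2\}$ where $z_1,z_2$ are complex-linearly independent, so that $E$ lies in a complex two-dimensional subspace $V$. Then $y^*|_E$ is the restriction of a complex-linear functional on $V$, which vanishes on a complex line $L$. Since $E\cap L$ is at most a real line, $q$ is degenerate or of rank at most $2$, and equality $q=\norm{\cdot}^2$ on all of $S_E$ would force $\norm{\cdot}$ to be a specific Hilbertian form on $E$. To rule this out, I would pick $z_1,z_2$ so that $E$ avoids this case. Alternatively, and more simply, in the complex case I would use the three-or-higher-dimensional real span if necessary. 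In the real case, which is the essential one, the argument above is complete. In every case the exceptional set is finite, hence null, which establishes \eqref{propertyAGR}.
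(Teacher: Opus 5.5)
Your real-scalar argument is correct. Each face $\{e^*=\pm1\}\cap B_E$ is a convex subset of $S_E$, hence a single point by strict convexity, and your full-circle curve $F$ meets $\{\pm e_0\}$ at most three times. This is the same strict-convexity mechanism the paper uses. The paper instead takes the arc $t\mapsto ((1-t)z_1+tz_2)/\norm{(1-t)z_1+tz_2}$ and gets at most one exceptional parameter.

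However, the lemma is meant for $\K=\R$ and $\K=\C$: the paper's proof handles unimodular scalars explicitly. Your complex case is not proved, and the quadratic-form detour does not close it.
\begin{itemize}
\item The remark that $q$ is ``degenerate or of rank at most $2$'' is vacuous for a form on $\R^2$.
\item ``Pick $z_1,z_2$ so that $E$ avoids this case'' and ``use a higher-dimensional real span'' are not arguments.
\item The geometric principle you invoke is false in general. You claim a strictly convex curve inside a convex region meets its boundary in finitely many points or along an arc. But the contact set of a strictly convex body inscribed in a disc can be a Cantor set, even one of positive length. So even excluding $q=\norm{\cdot}^2$ would not give a null exceptional set along this route.
\end{itemize}

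The missing idea is that letting the unimodular $\lambda$ range over a circle is harmless. Suppose $e_1,e_2\in S_Z$ satisfy $\abs{y^*(e_1)}=\abs{y^*(e_2)}=1$. Choose unimodular $\theta_j$ with $\theta_j y^*(e_j)=1$. Then $2=y^*(\theta_1e_1+\theta_2e_2)\leq\norm{\theta_1e_1+\theta_2e_2}\leq 2$, so strict convexity forces $\theta_1e_1=\theta_2e_2$. Hence the whole set $\{e\in S_Z:\abs{y^*(e)}=1\}$ lies in a single orbit $\{\theta e_0:\abs{\theta}=1\}$.

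Now take $z_1,z_2$ complex-linearly independent, as you eventually suggest, and write $e_0=az_1+bz_2\in E=\Span_\R\{z_1,z_2\}$ with $a,b\in\R$ not both zero. The condition $\theta e_0\in E$ forces $\theta a,\theta b\in\R$, hence $\theta=\pm1$. So $S_E$ meets that orbit only at $\pm e_0$, and your real-case count goes through verbatim. The paper avoids the case split altogether: its arc meets each orbit $\{\theta e:\abs{\theta}=1\}$ at most once, by linear independence of $z_1,z_2$ over $\K$. This gives at most one exceptional $t$ for both scalar fields simultaneously.
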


\begin{proof}
     Let $z_1,z_2$ be two linearly independent elements of $S_Z$ and define a continuous curve $F:[0,1]\to S_Z$ by
     $$F(t)=\frac{(1-t)z_1+tz_2}{\norm{(1-t)z_1+tz_2}}$$
     for $t\in [0,1]$; note that this is well-defined as the denominator is non-zero by linear independence.
    Suppose that there are $t_1,t_2\in [0,1]$ and $y^{*}\in S_{Y^{*}}$ such that $|y^{*}(F(t_1))|=1=|y^{*}(F(t_2))|$. Then $\theta_1 y^{*}(F(t_1))=1=\theta_2 y^{*}(F(t_2))$ for some $|\theta_1|=1=|\theta_2|$. Hence, $\|\theta_1 F(t_1)+ \theta_2 F(t_2)\|=2$. This forces $F(t_2)= \frac{\theta_1}{\theta_2} F(t_1)$ by the strict convexity of $Z$. But this implies linear dependence between $z_1$ and $z_2$, and hence a contradiction. Thus, for each $y^*\in S_{Y^*}$ there is at most one value of $t\in [0,1]$ for which $|y^*(F(t))|=1$, and \eqref{propertyAGR} is satisfied.
\end{proof}

    In particular, if $Y$ is strictly convex and $\dim Y>1$, then $Y$ satisfies \eqref{propertyAGR}. However, property \eqref{propertyAGR} is more general than strict convexity: for example, $\ell_1$ satisfies \eqref{propertyAGR} since it contains a two-dimensional strictly convex subspace, see below \cite[Theorem 5]{Lindenstrauss64} (although $Y=\ell_1$ does not satisfy the other hypotheses of Theorem \ref{theorem:L1-result2}). It is also enough if $Y$ contains a two-dimensional subspace $Z$ such that $S_Z$ merely contains a strictly convex part, as we may restrict the construction in Lemma \ref{lm:AGR sufficient condition} to that part of the sphere.

We remark here that we do not know the answer to Question \ref{question:2-dim} below. Notice that if it has a positive answer, then every infinite-dimensional reflexive space $Y$ satisfies the hypotheses in Theorem \ref{theorem:L1-result2}.
    
\begin{question} \label{question:2-dim}
Does every infinite-dimensional reflexive Banach space contain a strictly convex 2-dimensional subspace?
\end{question}

\begin{proof}[Proof of Theorem \ref{theorem:L1-result2}]
Let $\kappa\geq \aleph_0$ be some cardinal. We will first show that the result holds for $L_1(\nu)\pten Y^*$, with $L_1(\nu):= L_1([0,1]^\kappa)$ (i.e. $\nu:= \lambda^\kappa$ where $\lambda$ stands for the Lebesgue measure on $[0,1]$). Recall that, by \eqref{propertyAGR}, there exists a Borel map $F:[0,1]\to S_Y$ such that for all $y^*\in S_{Y^*}$ we have $|y^*(F(t))|<1$ for almost all $t\in [0,1]$. Considering a (continuous) projection $P: [0,1]^\kappa \rightarrow [0,1]$, it follows that $F\circ P : [0,1]^\kappa \rightarrow S_Y$ is  a Borel map and $|y^*(F(P(\omega)))|<1$ for almost all $\omega\in [0,1]^\kappa$. Note that the image of $F$, and hence of $F\circ P$, is separable by virtue of \cite[Lemma 6.10.16]{Bogachev}.

Our argument is inspired in \cite[Remark 5.7(2)]{Godefroy15}. We
 define an operator $T:L_1([0,1]^\kappa)\rightarrow Y\subset Y^{**}$ given by
$$T(h):=\int_{[0,1]^\kappa} h(\omega) F(P(\omega))\ d\nu(\omega),\quad \forall h\in L_1([0,1]^\kappa).$$
This Bochner integral is well defined because the map $F\circ P$ is $\nu$-measurable on $[0,1]^\kappa$ and $\nu$-essentially bounded, while $h$ is $\nu$-measurable and $\nu$-absolutely integrable. Consequently, $T$ takes values in $Y$.

Observe that, for each $h \in L_1 (\nu)$, we have that
\begin{align*}
    \norm{T(h)} \leq \int_{[0,1]^\kappa} \norm{h(\omega)F(P(\omega))} d\nu(\omega) = \int_{[0,1]^\kappa} |h(\omega)| d\nu(\omega)=\norm{h} 
\end{align*} as $\norm{F(P(\omega))}=1$. This proves that $\|T\|\leq 1$.

Given each $h\in L_1(\nu)$ and $y^*\in Y^*$, the evaluation is given by
$$\langle y^*,T(h) \rangle = \int_{[0,1]^\kappa} h(\omega) \langle y^*,F(P(\omega)) \rangle\ d\nu(\omega).$$
By the canonical identification $(L_1 (\nu) \pten Y^*)^* \equiv \call(L_1 (\nu), Y^{**})$, the operator $T$ can also be regarded as a functional on the space $L_1 (\nu) \pten Y^*$, which can be identified with $L_1(\nu, Y^*)$ (see \cite[Example 2.19]{Ryan}). Its dual action for each $\widetilde{g}\in L_1(\nu, Y^*)$ is then given by 
$$\langle T, \, \widetilde{g}\rangle = \int_{[0,1]^\kappa} \langle F(P(\omega)), \, \widetilde{g}(\omega)\rangle\ d\nu(\omega).$$

Since $Y^*$ has the RNP, using \cite[Theorem I.4.2]{DGZ} (which is proved for real Banach spaces but can be easily extended to the complex case) we can find a mapping $f:S_Y\to S_{Y^*}$ such that
\begin{equation}\label{eq:norming-selector}
\langle f(y), y\rangle =1,\quad \|f(y)\|=1,\quad \forall\,y\in S_Y,
\end{equation}
and $f$ is the pointwise limit of a sequence of norm-to-norm continuous mappings.
In particular, $f$ is Borel measurable (see, e.g., \cite[Corollary 6.2.7]{Bogachev}).

Let us show that $T$ attains its norm as a functional. Take $h_0:[0,1]^\kappa\to Y^*$ given by
$$
h_0(\omega):=f(F(P(\omega))),
\quad \omega\in [0,1]^\kappa.
$$
Then it is easy to check that $h_0\in L_1(\nu, Y^*)$ (notice that $h_0$ is $\nu$-essentially separably valued by \cite[Lemma 6.10.16]{Bogachev}, as the image of $F$ is a separable subset of $S_Y$) and that $\langle T, h_0\rangle = \|h_0\| = \|\nu\| = 1$; that is, $T$ attains its norm as a functional on the space $L_1 (\nu) \pten Y^*$ at $h_0$.

We now claim that $T$ does not attain its norm as an operator in $\mathcal{L}(L_1(\nu),Y^{**})$. Note that, if $h_0$ belongs to $\INA_\pi (L_1 (\nu) \pten Y^*)$, then  Corollary \ref{corollary1:INA} shows that $T$ attains its norm as an operator in $\mathcal{L}(L_1(\nu),Y^{**})$. Consequently, our claim implies that $h_0 \notin \INA_\pi (L_1 (\nu) \pten Y^*)$, in particular, 
\[
\INA_\pi (L_1 (\nu) \pten Y^*) \neq L_1 (\nu) \pten Y^*.
\]

To prove the claim, suppose, towards a contradiction, that there exists $g\in S_{L_1(\nu)}$ such that $\norm{T(g)}=1$. Then we can find $y^*\in S_{Y^*}$ with $\langle y^*, T(g)\rangle=1$ (observe that $T(g)\in Y$). 

Now, by the linearity of the Bochner integral, we have
$$
1=\langle y^*, T(g)\rangle =\int_{[0,1]^\kappa}g(\omega) \langle y^*, F(P(\omega))\rangle \ d\nu(\omega) .
$$

Consequently
$$1 \leq \int_{[0,1]^\kappa} |g(\omega)| |\langle y^*, F(P(\omega))\rangle| \ d\nu(\omega) \leq \int_{[0,1]^\kappa} |g(\omega)|\ d\nu(\omega)= \norm{g}=1,$$
Hence equality holds throughout, and therefore
$$\int_{[0,1]^\kappa} |g(\omega)|(1-|\langle y^*, F(P(\omega))\rangle|) \ d\nu(\omega)=0.$$
Since the integrand is nonnegative, it follows that
$$g(\omega)=0 \text{ for $\nu$-a.e. $\omega\in [0,1]^\kappa$ such that $|\langle y^*, F(P(\omega))\rangle|<1$}.$$
On the other hand, we have  $\abs{\duality{y^*,F(P(\omega))}}<1$ $\nu$-a.e., so  $g(\omega)=0$  $\nu$-a.e., which is a contradiction. This completes the proof in the case $\mu=\nu$.

Now, to finish the proof, let $\mu$ be a non-purely atomic measure. Then $L_1(\mu)$ is an $\ell_1$-sum of spaces of the form $L_1(\mu_\gamma)$ where $\mu_\gamma$ is a finite measure for every $\gamma\in\Gamma$ (see \cite[p. 501]{deflo}). Hence, $L_1(\mu)$ contains a $1$-complemented subspace isometric to $L_1(\mu_{\gamma_0})$, where $\mu_{\gamma_0}$ is finite and non-purely atomic. By Maharam's theorem (see, for instance, \cite[Theorem 14.9]{Lacey}), $L_1(\mu_{\gamma_0})$ contains a $1$-complemented subspace isometric to $L_1([0,1]^\kappa)$ for some cardinal $\kappa\geq \aleph_0$. 
That is, $L_1(\mu)$ contains a 1-complemented, isometric copy of $L_1(\nu)$ where $\nu$ is as above. Since we have proved that $\INA_\pi (L_1 (\nu) \pten Y^*) \neq L_1 (\nu) \pten Y^*$, it follows from Corollary \ref{cor1:INA} that $\INA_\pi ( L_1 (\mu) \pten Y^* ) \neq L_1 (\mu) \pten Y^*$. 
\end{proof}

\begin{corollary}
Let $(\Omega, \Sigma, \mu)$ be a measure space with $\mu$ not purely atomic. Suppose that $Y$ is a Banach space that contains a strictly convex subspace of dimension 2 and such that $Y^*$ has the RNP. Then $\INA_\pi(L_1(\mu)\pten Y^*)\neq L_1(\mu)\pten Y^*$.
\end{corollary}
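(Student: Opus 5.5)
This corollary follows by combining Theorem \ref{theorem:L1-result2} with Lemma \ref{lm:AGR sufficient condition}, so the plan is simply to check that the hypotheses of the theorem are met.

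First, let $Z\subseteq Y$ be the given strictly convex subspace with $\dim Z=2>1$. By Lemma \ref{lm:AGR sufficient condition}, $Y$ satisfies property \eqref{propertyAGR}. Concretely, the map $F$ is the normalized segment between two linearly independent unit vectors $z_1,z_2\in S_Z$. This $F$ is continuous, hence Borel measurable, and every $y^*\in S_{Y^*}$ satisfies $|y^*(F(t))|=1$ for at most one $t$. In particular $|y^*(F(t))|<1$ for almost every $t\in[0,1]$.

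Second, the remaining hypotheses of Theorem \ref{theorem:L1-result2} are exactly the assumptions here: $\mu$ is not purely atomic and $Y^*$ has the RNP. Applying the theorem gives $\INA_\pi(L_1(\mu)\pten Y^*)\neq L_1(\mu)\pten Y^*$.

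There is no genuine obstacle. The one point to keep in mind is that \eqref{propertyAGR} involves functionals on all of $Y$, not only on $Z$. The proof of Lemma \ref{lm:AGR sufficient condition} already handles this: for $y^*\in S_{Y^*}$, if $|y^*(F(t_1))|=|y^*(F(t_2))|=1$, then $\|\theta_1F(t_1)+\theta_2F(t_2)\|=2$ for suitable unimodular scalars $\theta_1,\theta_2$. Since $F(t_1),F(t_2)\in Z$ and $Z$ is strictly convex, the norm being computed in $Y$ makes no difference, so $F(t_1)$ and $F(t_2)$ are proportional. This contradicts the linear independence of $z_1$ and $z_2$ unless $t_1=t_2$.
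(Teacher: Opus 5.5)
Your proposal is correct and is the paper's argument: the corollary is stated without a separate proof because it follows by feeding Lemma \ref{lm:AGR sufficient condition}, applied to the two-dimensional strictly convex subspace, into Theorem \ref{theorem:L1-result2}. The remaining hypotheses of that theorem ($\mu$ not purely atomic, $Y^*$ with the RNP) are exactly those of the corollary, and you are right that computing the norm in $Y$ rather than in $Z$ causes no issue.
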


\noindent 
\textbf{Acknowledgements}: Parts of this research were conducted during visits by S. Dantas and J. Guerrero-Viu to the Universitat Politècnica de València in 2025, and they would like to acknowledge the support received there. The authors would like to thank L.~C.~Garc\'{\i}a-Lirola for several fruitful conversations on the topic of this manuscript.
They are also thankful to Andreas Defant for letting us know about Example \ref{example:Defant}.

\noindent
\textbf{Funding information}: R. J. Aliaga, S. Dantas, and \'O. Rold\'an were supported by Grant PID2021-122126NB-C33 funded by MICIU/AEI/10.13039/501100011033 and by ERDF/EU.
S. Dantas and \'O. Rold\'an were also supported by Grant PID2021-122126NB-C31 funded by MICIU/AEI/10.13039/501100011031 and by ERDF/EU.
J. Guerrero-Viu was supported by FPU24/02284 predoctoral grant funded by MCIU; by grant PID2022-137294NB-I00 funded by MCIN/AEI/10.13039/501100011033 and by ``ERDF A way of making Europe''; by grant E48-23R funded by Diputación General de Aragón (DGA).
M. Jung was supported by the research fund of Hanyang University (HY-202500000003346).


\begin{thebibliography}{FaHaMo}

\bibitem {gasp} \textsc{M. D.~Acosta}, \textsc{F.J.~Aguirre, and R.~Pay\'a}, \textit{There is no bilinear Bishop-Phelps theorem}, Isr. J. Math., \textbf{93}, (1996), 221--227.


\bibitem{AGM} \textsc{M. D.~Acosta}, \textsc{D.~Garc\'ia, and M.~Maestre}, \textit{A multilinear Lindenstrauss theorem}, J. Func. Anal. {\bf 235}, (2006), 122--136.

\bibitem{Aguirre} \textsc{F. J. Aguirre}, Norm-attaining operators into strictly convex Banach spaces, J. Math. Anal. Appl. {\bf 222} (1998), no. 2, 431--437.

\bibitem{APS24}
\textsc{R. J. Aliaga, E. Perneck\'a and R. J. Smith},
\textit{Convex integrals of molecules in Lipschitz-free spaces},
J. Funct. Anal. \textbf{287} (2024), 110560.

\bibitem{AFW} \textsc{R.~M.~Aron, C.~Finet, and E.~Werner}, \textit{Some remarks on norm-attaining $n$-linear forms}, Function Spaces (K. Jarosz, ed.), Lecture Notes in Pure and Appl. Math., {\bf 172}, Marcel Dekker, New York, (1995), 19--28.

\bibitem{ADM} \textsc{R.~M.~Aron, D.~Garc\'ia, and M.~Maestre}, \textit{On norm attaining polynomials}, Publ. Res. Inst. Math. Sci., {\bf 39}, (2003), 165--172.




\bibitem{Bogachev}
\textsc{V. I. Bogachev},
\textit{Measure theory},
Springer Berlin-Heidelberg, 2007.

\bibitem{B} \textsc{J.~Bourgain}, \textit{On dentability and the Bishop-Phelps property}, Israel J. Math., {\bf 28}, (1977), 265--271.

\bibitem{Casazza}
\textsc{P. G. Casazza},
\textit{Approximation properties},
in: Handbook of the geometry of Banach spaces, vol. 1, 271--316, North-Holland, Amsterdam, 2001.

\bibitem {CCGMR} \textsc{B.~Cascales}, \textsc{R.~Chiclana}, \textsc{L.~C.~Garc\'ia-Lirola}, \textsc{M.~Mart\'in, and A.~Rueda Zoca}, \textit{On strongly norm attaining Lipschitz maps}, J. Funct. Anal., \textbf{277}, (2019), 1677--1717.




\bibitem{Choi} \textsc{Y.~S.~Choi}, \textit{Norm attaining bilinear forms on $L_1[0,1]$}, J. Math. Anal. Appl., \textbf{211}, (1997), 295--300.

\bibitem{Christensen}
\textsc{J. P. R. Christensen},
\textit{Borel structures and a topological zero-one law},
Math. Scand. \textbf{29} (1971), 245--255.	

\bibitem{DGJR} \textsc{S. Dantas, L. C. García-Lirola, M. Jung, and A. Rueda Zoca}, On norm attainment in (symmetric) tensor products, Quaest. Math. \textbf{46} (2023), no.~2, 393--409.

\bibitem{DJRR} \textsc{S. Dantas, M. Jung, Ó. Roldán and A. Rueda Zoca}, Norm-attaining tensors and nuclear operators. Mediterr. J. Math. \textbf{19} (2022), no.~1, Paper No.~38, 27~pp.

\bibitem{DM} \textsc{S.~Dantas and R.~Medina},
\textit{On holomorphic functions attaining their weighted norms},
Rev. Real Acad. Cienc. Exactas F\'is. Nat. Ser. A--Mat. \textbf{119} (2025), Paper No.~14.

\bibitem {deflo} \textsc{A. Defant and K. Floret}, \textit{Tensor norms and operator ideals}, North-Holland Publishing Co., Amsterdam, 1993.

\bibitem{DGZ} 
\textsc{R. Deville, G. Godefroy, and V. Zizler}, \textit{Smoothness and renormings in Banach spaces},
Pitman Monographs and Surveys in Pure and Applied Mathematics, vol.~64,
Longman Scientific \& Technical, Harlow; copublished in the United States with John Wiley \& Sons, Inc., New York, 1993.

\bibitem{DU} \textsc{J. Diestel and J. J. Uhl Jr.}, \textit{Vector Measures}, American Mathematical Society, Providence, R.I., 1977.

\bibitem{GGM} \textsc{D. García, B. C. Grecu, M. Maestre}, \textit{Geometry in preduals of spaces of 2-homogeneous polynomials on Hilbert spaces}.
Monatsh. Math. \textbf{157} (2009), no. 1, 55–67.

\bibitem{GGMR}
\textsc{L. C. García-Lirola, G. Grelier, G. Mart\'inez-Cervantes and A. Rueda Zoca},
\textit{Extremal structure of projective tensor products},
Results Math. \textbf{78} (2023), Paper No.~196.

\bibitem{GGR} \textsc{L. C. García-Lirola, J. Guerrero-Viu and A. Rueda Zoca}, \textit{Projective tensor products where every element is norm-attaining},
Banach J. Math. Anal. \textbf{19} (2025), Paper No.~19.

\bibitem{Godefroy15} \textsc{G.~Godefroy}, \textit{A survey on Lipschitz-free Banach spaces}, Comment. Math. \textbf{55} (2015), 89--118.

\bibitem{H} \textsc{R.~E.~Huff}, \textit{Dentability and the Radon-Nikod\'ym property}, Duke Math. J., {\bf 41}, (1974) 111--114.



\bibitem {kms} \textsc{V.~Kadets}, \textsc{M.~Mart\'in, and M.~Soloviova}, \textit{Norm-attaining Lipschitz functionals}, Banach J. Math. Anal., \textbf{10}, (2016), 621--637.



\bibitem{Lacey}
\textsc{H. E. Lacey},
\textit{The isometric theory of classical Banach spaces},
Die Grundlehren der mathematischen Wissenschaften \textbf{208},
Springer-Verlag Berlin-Heidelberg, 1974.



\bibitem{Lindenstrauss63} 
\textsc{J.~Lindenstrauss}, \textit{On operators which attain their norm}, Isr. J. Math. \textbf{1} (1963), 139--148.

\bibitem{Lindenstrauss64} \textsc{J. Lindenstrauss}, \textit{On the extension of operators with a finite-dimensional range},
Illinois J. Math. \textbf{8} (1964), 488--499.



\bibitem{Martin} \textsc{M.~Mart\'{i}n}, \textit{The version for compact operators of Lindenstrauss properties A and B}, RACSAM \textbf{110} (2016), 269--284.



\bibitem{Rueda} \textsc{A. Rueda Zoca}, \textit{Several remarks on norm attainment in tensor product spaces},
Mediterr. J. Math. \textbf{20} (2023), Paper No.~208.

\bibitem{Ryan} \textsc{R. A. Ryan}, \textit{Introduction to Tensor Products of Banach Spaces},
Springer Monographs in Mathematics,
Springer, London, 2002.

\bibitem{S} \textsc{W.~Schachermayer}, \textit{Norm attaining operators on some classical Banach spaces}, Pacific J. Math., {\bf 105}, (1983), 427--438.

\bibitem{U} \textsc{J.~J.~Uhl}, \textit{Norm attaining operators on $L_1[0,1]$ and the Radon-Nikod\'ym property}, Pacific J. Math., {\bf 63}, (1976), 293--300.


\end{thebibliography}
\end{document}